\newcommand{\bsd}{\boldsymbol d}
\newtheorem{lemma}{Lemma}
\newtheorem{proposition}{Proposition}
\newtheorem{definition}{Definition}
\newtheorem{theorem}{Theorem}
\newtheorem{example}{Example}
\newtheorem{remark}{Remark}
\newtheorem{assumption}{Assumption}
\newtheorem{corollary}{Corollary}
\newcounter{compteur-enumeration}
\renewcommand{\cT}{T}
\renewcommand{\cM}{M}
\providecommand{\keywords}[1]{\textbf{\textbf{Keywords. }} #1}
\title{Stochastic Subgradient Descent Escapes Active Strict Saddles on Weakly Convex Functions}
\author{Pascal Bianchi, Walid Hachem, Sholom Schechtman}
\begin{document}
\maketitle

\begin{abstract}
In non-smooth stochastic optimization, we establish the non-convergence of the
stochastic subgradient descent (SGD) to the critical points recently called
active strict saddles by Davis and Drusvyatskiy.  Such points lie on a manifold
$M$ where the function $f$ has a direction of second-order negative curvature.
Off this manifold, the norm of the Clarke subdifferential of $f$ is
lower-bounded.  We require two conditions on $f$. The first assumption is a
Verdier stratification condition, which is a refinement of the popular Whitney
stratification.  It allows us to establish a strengthened version of the
projection formula of Bolte \emph{et al.} for Whitney stratifiable functions,
and which is of independent interest.  The second assumption, termed the angle
condition, allows to control the distance of the iterates to $M$.  When $f$ is
weakly convex, our assumptions are generic.  Consequently, generically in the
class of definable weakly convex functions, SGD converges to a local
minimizer.
\end{abstract}

\keywords{Non-smooth optimization, stochastic gradient descent, avoidance of traps, Clarke subdifferential, stratification, weak convexity}
\section{Introduction}

Stochastic approximation algorithms that operate on non-convex and non-smooth
functions have recently attracted a great deal of attention, owing to their
numerous applications in machine learning and in high-dimensional statistics.
The archetype of such algorithms is the so-called Stochastic Subgradient
Descent (SGD), which reads as follows. Given a locally Lipschitz function $f :
\bbR^d \rightarrow \bbR$ which is not necessarily smooth nor
convex, the $\bbR^d$--valued sequence $(x_n)$ of iterates generated by such an
algorithm satisfy the inclusion
\begin{equation}
\label{eq:sgd-intro}
x_{n+1} \in x_n - \gamma_n \partial f(x_n) + \gamma_n \eta_{n+1},
\end{equation}
where the set--valued function $\partial f$ is the so-called Clarke
subdifferential of $f$, the sequence $(\gamma_n)$ is a sequence of positive
step sizes converging to zero, and $\eta_{n+1}$ is a zero-mean random vector on
$\bbR^d$ which presence is typically due to the partial knowledge of $\partial
f$ by the designer.  It is desired that $(x_n)$ converges to the set of local
minimizers of the function $f$.

Before delving into the subject of convergence toward minimizers, let us
first consider the set $\cZ:=\{ x\in \bbR^d: 0 \in \partial f(x)\}$ of
\emph{Clarke critical points} of $f$, which is generally larger than the set of
minimizers, in the non-convex case. In order to ensure the convergence of
$(x_n)$ to $\cZ$, the sole local Lipschitz property of $f$ is not enough
(see~\cite{dan_dru19} for a counterexample), and some form of structure for the
function $f$ is required.  Since the work of Bolte \emph{et al.}
\cite{bolte2007clarke} in optimization theory, it is well known that the
so-called \emph{definable on an o-minimal structure} (henceforth definable)
functions, which belong to the family of \emph{Whitney stratifiable} functions
(see Section~\ref{sec-prel} below), is relevant for the convergence analysis of
$(x_n)$ and beyond.  This class of functions is general enough so as to contain
all the functions that are practically used in machine learning, statistics, or
applied optimization.  In this framework, the almost sure convergence of
$(x_n)$ to $\cZ$ was established by Davis \emph{et al.}~ in
\cite{dav-dru-kak-lee-19} (in the sense that $\limsup \dist(x_n, \cZ) = 0$, the convergence of $(x_n)$ to a unique element of $\cZ$ is, for now, only a conjecture). Another work in the same line is
\cite{maj-mia-mou18}. Bolte and Pauwels \cite{bolte2019conservative} generalize
the algorithm \eqref{eq:sgd-intro} by replacing $\partial f$ with an arbitrary
so-called conservative field. The constant step size regime $\gamma_n \equiv
\gamma$ is considered in \cite{bia-hac-sch-SVA}.

Thanks to these contributions, we know that any convergent subsequence of $(x_n)$ will have a limit in $\cZ$. However, as said above, $\cZ$ is in general strictly larger
than the set of minimizers, and can contain ``spurious'' points such as local
maximizers or saddle points.  The issue of the \emph{non-convergence} of the
sequence given by \eqref{eq:sgd-intro} toward spurious critical points is
therefore crucial. The present paper investigates this issue.

Before getting into the core of our subject, it is useful to make a quick
overview of the results devoted to the avoidance of spurious critical points by
the iterative algorithms. The rich literature on this subject has been almost entirely
devoted to the smooth setting.  In this framework, the research has followed
two main axes:

\begin{itemize}

\item The noisy case, where the analogue of the sequence $(\eta_n)$ in the
smooth version of Algorithm \eqref{eq:sgd-intro} is non zero.  Here, the
seminal works of Pemantle~\cite{pem-90} and Brandière and
Duflo~\cite{bra-duf-96} allow to establish the non-convergence of the
Stochastic Gradient Descent (and, more generally, of Robbins-Monro algorithms)
to a certain type of spurious critical points, sometimes referred to as
\emph{traps} or \emph{strict saddle}. A critical point of a smooth function $f$
is called a trap if the Hessian matrix of $f$ at this point admits at least one
negative eigenvalue.  With probability one, the sequence $(x_n)$ cannot
converge to a trap, provided that the projection of the random perturbation
$\eta_{n}$ onto the eigenspace corresponding to the negative eigenvalues of
the Hessian matrix (henceforth, eigenspace of negative curvature) has a non
vanishing variance.

\item The noiseless case where $\eta_n \equiv 0$, studied for smooth functions
by \cite{lee16_avoid}. Here the authors show that for Lebesgue-almost all initialization points, the algorithm with
constant step will avoid the traps.
\end{itemize}
While both of these approaches rely on the center-stable invariant manifold
theorem which finds its roots in the work of Poincaré, they are different in
spirit. Indeed, in \cite{lee16_avoid} the trap avoidance is due to the random
initialization of the algorithm, whereas in \cite{bra-duf-96, pem-90}, it is
due to the inherent stochasticity brought by the sequence $(\eta_n)$.

We now get back to the non-smooth case. Here, the only paper that tackles the
problem of the spurious points avoidance is, up to our knowledge, the recent
contribution \cite{dav_dru21} of Davis and Drusvyatskiy.
The spurious points that were considered in this reference are the so-called
\emph{active strict saddles}.  Formally, a critical point is an active strict
saddle if it lies on a manifold $\cM$ such that {\sl i)} $f$ varies sharply
outside $\cM$, {\sl ii)} the restriction of $f$ to $\cM$ is smooth, and {\sl
iii)} the Riemannian Hessian of $f$ on $\cM$ has at least one negative
eigenvalue. For instance, the function $f : \bbR^2\to \bbR, (y,z)\mapsto | z|
- y^2$ admits the point $(0,0)$ as an active strict saddle with $\cM =  \bbR \times \{ 0\}$, and the restriction of $f$ to $\cM$ is the function $f_\cM(y, 0)
= - y^2$, which  has a second-order negative curvature. In this setting, and assuming
that $f$ is weakly convex, the
article~\cite{dav_dru21} focuses on the noiseless case, and study variants of the (implicit)
\emph{proximal point algorithm} rather than the (explicit) subgradient descent.
Similarly to \cite{lee16_avoid}, they show that for Lebesgue-almost every initialization point,
different versions of the proximal algorithm avoid active strict saddles with probability one. Such a result is possible due to the fact that proximal methods implicitly run a gradient descent on a regularized version of $f$ - the Moreau envelope (which is well-defined due to the weak-convexity of $f$).

Contrary to \cite{dav_dru21}, the algorithm~\eqref{eq:sgd-intro} studied in this paper is explicit,
meaning that it does not require the computation of a proximal operator associated with the non-smooth function.
In this situation, the sole randomization of the initial point is not sufficient to expect an avoidance of active strict saddles.
Here, in the same line as \cite{pem-90,bra-duf-96}, our analysis strongly relies on the presence of
the additive random perturbation~$\eta_n$.

In the framework of weakly convex definable functions, we investigate the problem of the
avoidance of the active strict saddle points.
Our approach goes as
follows. First, we need to show that the iterates $(x_n)$ converge sufficiently
fast to $\cM$, thanks to the sharpness of $f$ outside this manifold. To that
end, our first tool is an assumption that we term as the \emph{angle condition}.
Roughly, this assumption provides a lower bound on the inner product between
the subgradients of $f$ at $x$ and the normal direction from $\cM$ to $x$ when
the point $x$ is near $\cM$. The angle
condition allows to control the distance between the iterate $x_n$ of
Algorithm~\eqref{eq:sgd-intro} and the manifold $\cM$. Second,
we  rely on the fact that when $f$ is definable, its graph always
admits a so-called  \emph{Verdier stratification}, which is perhaps less known
than the Whitney stratification, and is a refinement of the latter
\cite{loi98}.  The key advantage of the Verdier over the Whitney stratification
lies in a Lipschitz-like condition on the (Riemannian) gradients of $f$ on two
adjacent strata, which is established in the paper.
 As the restriction
$f_\cM$ of $f$ to $\cM$ is smooth, the projected iterates, using the Verdier stratification property, are shown to follow a
dynamics which is similar to a (smooth) Stochastic Gradient Descent, up to a
residual term induced by the projection step.  In that sense, the avoidance of
active strict saddles in the non-smooth setting follows from the avoidance of
traps in the smooth setting, as established in~\cite{bra-duf-96}.  We show that
the strict saddle is avoided under the assumption that the (conditional) noise
covariance matrix has a non zero projection on the subspace with negative
curvature associated with $f_\cM$ near the active strict saddle.

Before pursuing, it is important to discuss the matter of the \emph{genericity}
of the assumptions that we just outlined.  First, since our avoidance results
are restricted to the active strict saddles, the question of the presence of
critical points that are neither local minima nor active strict saddles is
immediately raised. Actually, this question was considered in
\cite{drus_iof_lew_generic,dav_dru21}. It is  established there that if $f$ is
definable and weakly convex, then for Lebesgue-almost all vectors $u \in
\bbR^d$, the function $f_u(x) \eqdef f(x)-\scalarp{u}{x}$ admits a finite
number of Clarke critical points, and that each of these points is either an
active strict saddle or a local minimizer.  In that sense, in the class of
definable weakly convex functions, spurious critical points generically
coincide with active strict saddles.  We also need to inspect the generality of the Verdier and the angle conditions. In Theorem~\ref{th:activ_gen_ver} below, we  show
that these assumptions are automatically satisfied when $f$ is weakly convex.
From these considerations, we conclude that generically in the sense of
\cite{drus_iof_lew_generic,dav_dru21}, SGD algorithm~\eqref{eq:sgd-intro}
converges to a local minimum when $f$ is a weakly convex function, assuming
that the noise is omnidirectional enough at the strict saddles.

Let us summarize the contributions of this paper:
\begin{itemize}

\item Firstly, we bring to the fore the fact that definable
functions admit stratifications of the Verdier type. These are more refined
than the Whitney stratifications which were popularized in the optimization
literature by \cite{bolte2007clarke}. While such stratifications are well-known
in the literature on o-minimal structures \cite{loi98}, up to our knowledge,
they have not been used yet in the field of non-smooth optimization. To
illustrate their interest in this field, we study the properties of the Verdier
stratifiable functions as regards their Clarke subdifferentials. Specifically, we refine the so-called projection formula (see \cite[Proposition 4]{bolte2007clarke} and
Lemma~\ref{lm:f_bol_strat} below) to the case of definable, locally Lipschitz continuous functions by establishing a Lipschitz-like condition on the (Riemannian) gradients of two adjacent strata.

\item With the help of the Verdier and the angle conditions, we show that the
SGD avoids the active strict saddles if the noise $\eta_n$ is omnidirectional
enough.

\end{itemize}

Let us mention here a key difference with the first version of the paper submitted to arXiv, where the nonconvergence of SGD toward active strict saddles was stated under a weaker form of angle condition. The latter allowed us to deal with functions that are not (locally) weakly convex. Unfortunately, a serious flaw was pointed out by an anonymous reviewer, who
noted that our proof strongly relied on the use of \cite[Theorem 4.1]{ben-hof-sor-05}, which turns out to be incorrect. We refer to Remark~\ref{rmk:false_first} and Appendix~\ref{app:ben} for a detailed discussion on this issue.

We also point out that, shortly after the first submission of the
present paper, a concurrent and an independent work
\cite{davis_subg_active21} has appeared. In the latter, the question
of the avoidance of active strict saddles by SGD was treated with
very similar techniques. In particular, their proximal aiming
condition coincides with our angle conditions. Other questions, such as
the rate of convergence and the asymptotic normality close to the active
manifolds, were also addressed. We believe that both of our works show
that the Verdier and the angle condition are well-founded and might be
interesting for the optimization community. They, furthermore, open
the way for a more thorough investigation of the avoidance by SGD
of spurious points in a non-smooth context.

The rest of the paper is organized as follows.
Section~\ref{sec-prel} is devoted to the introduction of the mathematical tools
in use in this paper. Most of the results in this section are known, except for
the strengthened projection formula, which is stated in Theorem~\ref{th:f_ver_strat}.
In Section~\ref{sec:act_def_avoid}, we discuss the notion of active strict saddles.
After recalling some results of \cite{dav_dru21}, we introduce the Verdier and angle conditions.
We also discuss the genericity of the these conditions, in the class of weakly convex functions.
In Section~\ref{sec:avoid_traps}, we state the main result of the paper, namely, the avoidance
of active strict saddles. In Section~\ref{sec:fut_work} we discuss possible directions for future work. Sections~\ref{sec:proofs}--\ref{sec:proof_duf} are devoted to the proofs.

\section{Preliminaries}
\label{sec-prel}

\paragraph{Notations.}
Let $d\geq 1$ be an integer.  Given a set $S \subset \bbR^d$,
$\overline{S}$ denotes the closure of $S$, and $\conv(S)$ and
$\overline{\conv}(S)$ respectively denote the convex hull and the
closed convex hull of $S$. The distance to $S$ is denoted as $\dist(x,S) := \inf \{\|y-x\|:y\in S\}$. For a $C^1$ function $g: \bbR^p \rightarrow \bbR^d$ and $x \in \bbR^p$, we denote $J_g(x) \in \bbR^{d \times p}$ the Jacobian of $g$ at $x$.
If $E\subset\bbR^d$ is a vector space, we denote by $P_E$ the $d\times d$ orthogonal
projection matrix onto $E$. We say that a function $f: \bbR^d \rightarrow \bbR$ is weakly convex, if there is $\rho > 0$ such that the function $g(x):= f(x) + \rho \norm{x}^2$ is convex. For two sequences $(a_n), (b_n)$, we write $a_n \gtrsim b_n$ if $ \liminf \frac{a_n}{b_n} >0$. With this notation $a_n \sim b_n$ means $a_n \gtrsim b_n$ and $b_n \gtrsim a_n$.
For $r>0$, $B(0, r)$ denotes the open ball of radius $r$. $(a_n)_{n \geq N}$ denotes a sequence starting from $N\in \bbN$, if there is no possible confusion about the starting index it will also be denoted as $(a_n)$. If $(\mcF_n)$ is a filtration on some probability space and $\eta$ is a random variable, then for $n \in \bbN$, we denote $\bbE[\eta |\mcF_n]$ the conditional expectation of $\eta$ relatively to $\mcF_n$. In the proofs, the latter will usually be denoted as $\bbE_n \eta$.

Throughout the paper, $C$ and $C'$ will refer to positive constants
that can change from line to line and from one statement to another.

\subsection{Functions on Manifolds}\label{sec:subm}

We refer to \cite{Lafontaine_2015, boumal2020intromanifolds} for a detailed introduction on differential geometry.

Given two integers $p\geq 1$ and $k\leq d$,
a $C^p$ map $g \colon U \rightarrow \bbR^{d-k}$ on some open set $U\subset \bbR^d$ is called a $C^p$ submersion
if the rank of $J_g(x)$
is equal to $d-k$ for every $x\in U$.
We say that a set $\cM \subset \bbR^d$ is a $C^p$ submanifold of dimension $k$, if for every $y \in \cM$,
there is a neighborhood $U$ of $y$ and a $C^p$ submersion $g \colon U \rightarrow \bbR^{d-k}$, such that  $U \cap \cM = g^{-1}(\{0\})$.
We represent the tangent space of $\cM$ at $y$ by  $\cT_y\cM:= \ker J_g(y)$ (\emph{n.b.}, the definition is
independent of the choice of $g$).
Equivalently, $\cT_y\cM$ can be represented as the set of vectors $v\in \bbR^d$ such that there exists a differentiable map $c:(-\varepsilon,\varepsilon)\to\bbR^d$ such that
$c((-\varepsilon,\varepsilon))\subset\cM$, $c(0)=y$ and $\dot{c}(0)=v$.

For every $x\in \bbR^d$, we define:
$$
P_M(x) := \arg\min_{y\in M}\|y-x\|\,,
$$
as the (possibly empty) set of points $y^*\in \cM$ such that $\|y^*-x\| = \inf\{\|y-x\|:y\in \cM\}$.
The following lemma can be found in \cite{lew_proj_man08} (see also \cite[Chap. 3, Ex. 24]{Lafontaine_2015}), even though the first part dates back to the 50.
It states that, in the vicinity of any point of $\cM$, $P_M(x)$ is a singleton, henceforth, $P_M$ can be identified to a function on that neighborhood.
Recall that $P_{\cT_y\cM}$ is the orthogonal projection onto $\cT_y\cM$.
\begin{lemma}[Projection onto a manifold]
\label{lm:proj}
Let $\cM$ be a $C^p$ submanifold, with $p\geq 2$. Consider $y\in \cM$. Then, there exists a neighborhood $U$ of $y$,
such that $P_M:U\to\cM$ is a single-valued map. Moreover, $P_M$ is $C^{p-1}$ in that neighborhood, and $J_{P_\cM} = P_{\cT_y\cM}$.
\end{lemma}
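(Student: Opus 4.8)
The plan is to run the classical tubular-neighbourhood argument: near $y$ one writes every point $x$ uniquely as $x = z + \xi$ with $z \in \cM$ and $\xi \perp \cT_z\cM$, and then checks that this $z$ is exactly the metric projection $P_\cM(x)$. Concretely, I would start from a local $C^p$ parametrization $\phi\colon\Omega\to\cM$ around $y$ (which exists by applying the implicit function theorem to the defining $C^p$ submersion), with $\Omega\subset\bbR^k$ open and $\phi(t_0)=y$. The $k$ columns of the Jacobian $J_\phi(t)$ span $\cT_{\phi(t)}\cM$ and are of class $C^{p-1}$; completing $J_\phi(t_0)$ to a basis of $\bbR^d$ and applying Gram--Schmidt --- a smooth operation wherever the input vectors stay linearly independent, hence on a neighbourhood of $t_0$ --- produces a $C^{p-1}$ orthonormal frame $\nu_1(t),\dots,\nu_{d-k}(t)$ of the normal spaces $(\cT_{\phi(t)}\cM)^\perp$. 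Define $E(t,s)\eqdef\phi(t)+\sum_{j=1}^{d-k}s_j\nu_j(t)$, a $C^{p-1}$ map on a neighbourhood of $(t_0,0)$ in $\bbR^k\times\bbR^{d-k}$. Since $\partial_t E(t_0,0)=J_\phi(t_0)$ has image $\cT_y\cM$ and $\partial_s E(t_0,0)=[\nu_1(t_0)\mid\dots\mid\nu_{d-k}(t_0)]$ is an isometry onto $(\cT_y\cM)^\perp$, the differential $dE(t_0,0)$ is a linear isomorphism of $\bbR^d$. As $p\geq2$ the map $E$ is at least $C^1$, so the inverse function theorem provides a neighbourhood $W$ of $(t_0,0)$ and a neighbourhood $U_0$ of $y$ with $E\colon W\to U_0$ a $C^{p-1}$ diffeomorphism.

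Next I would identify the first component of $E^{-1}$ with $P_\cM$ on a possibly smaller neighbourhood $U\subset U_0$. A compactness step first shows that $P_\cM(x)$ is nonempty and contained in a fixed compact piece of $\cM$ once $x$ is close to $y$: pick a closed box $K\subset\Omega$ around $t_0$, so $\phi(K)\subset\cM$ is compact and contains $y$ in its relative interior; then $\dist(x,\cM\setminus\phi(K))\geq\dist(y,\cM\setminus\phi(K))-\norm{x-y}$ is bounded away from $0$ as $x\to y$, while $\dist(x,\phi(K))\leq\norm{x-y}\to0$, so the infimum $\dist(x,\cM)$ is attained and only on $\phi(K)$. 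Now take $x\in U_0$ near $y$ and $z^\star\in P_\cM(x)$, say $z^\star=\phi(t^\star)$. First-order optimality gives $x-z^\star\in(\cT_{z^\star}\cM)^\perp$, hence $x-z^\star=\sum_j s^\star_j\nu_j(t^\star)$ for some $s^\star$, and the crude estimates $\norm{x-z^\star}=\dist(x,\cM)\leq\norm{x-y}$ and $\norm{z^\star-y}\leq2\norm{x-y}$ force $(t^\star,s^\star)\in W$ provided $U$ is small enough. Since $E(t^\star,s^\star)=x$ and $E$ is injective on $W$, the pair $(t^\star,s^\star)$ --- and thus $z^\star$ --- is uniquely determined, so $P_\cM$ is single-valued on $U$ and equals $\phi\circ\pi_1\circ E^{-1}$ with $\pi_1(t,s)=t$. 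Being a composition of $C^{p-1}$ maps (with $\phi$ even $C^p$), $P_\cM$ is $C^{p-1}$ on $U$.

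For the Jacobian at $y$ I would argue as follows. Since $P_\cM$ restricts to the identity on $\cM\cap U$, differentiating $t\mapsto P_\cM(c(t))=c(t)$ along any $C^1$ curve $c$ in $\cM$ with $c(0)=y$ shows $J_{P_\cM}(y)$ acts as the identity on $\cT_y\cM$. Conversely, for $v\in(\cT_y\cM)^\perp$ one has $y+tv=E\big(t_0,\,(t\scalarp{v}{\nu_j(t_0)})_j\big)$ for small $t$, so the first component of $E^{-1}(y+tv)$ is constantly $t_0$, i.e. $P_\cM(y+tv)\equiv y$, hence $J_{P_\cM}(y)v=0$. Since $\bbR^d=\cT_y\cM\oplus(\cT_y\cM)^\perp$, this yields $J_{P_\cM}(y)=P_{\cT_y\cM}$ (and the same argument at any $x\in\cM\cap U$ gives $J_{P_\cM}(x)=P_{\cT_x\cM}$).

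The step I expect to be the main obstacle is the identification in the second paragraph: $P_\cM$ is a priori defined by a global minimization over $\cM$, which need not be closed, so one must carefully arrange that it agrees locally with the $C^{p-1}$ map coming from the inverse function theorem. This forces the two successive shrinkings --- first discarding the part of $\cM$ far from $y$ via the distance estimate, then using injectivity of the endpoint map $E$ on $W$ --- and the only genuinely delicate bookkeeping is to quantify how close $x$ must be to $y$ so that the normal displacement $x-z^\star$ stays small enough to keep $(t^\star,s^\star)$ inside $W$. Once this is in place, the $C^{p-1}$ regularity and the formula $J_{P_\cM}(y)=P_{\cT_y\cM}$ follow with essentially no further work.
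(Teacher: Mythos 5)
Your proof is correct and complete; it is the classical normal-bundle/tubular-neighbourhood argument (endpoint map $E(t,s)=\phi(t)+\sum_j s_j\nu_j(t)$, inverse function theorem, identification of the minimizer via first-order optimality and a compactness/localization step), which is exactly the route taken in the references \cite{lew_proj_man08} and \cite[Chap.~3, Ex.~24]{Lafontaine_2015} to which the paper delegates this lemma without proof. All the delicate points — that $E$ is only $C^{p-1}$ but still at least $C^1$ since $p\geq 2$, that the metric projection is attained and forced into the chart, and that the Jacobian computation must also be carried out at points $x\in\cM\cap U$ other than $y$ (as the paper later uses $J_{P_\cM}(y_n^N)=P_{\cT_{y_n^N}\cM}$) — are handled.
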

We say that a function $f \colon \cM \rightarrow \bbR$ is $C^p$, if $\cM$ is a $C^p$ submanifold, and
if for every $y \in \cM$, there is a neighborhood $U\subset\bbR^d$ of $y$ and a $C^p$ function $F : U \rightarrow \bbR$ that agrees with $f$ on $\cM \cap U$. In this case, $F$ is called a smooth representative of $f$ around $y$ on $M$.
If $f: \cM \rightarrow \bbR$ is $C^1$, we define for every $y\in \cM$,
$$
\nabla_{\cM} f(y) := P_{\cT_y\cM} \nabla F(y)\,,
$$
where $F$ is any smooth representative of $f$ around $y$.
The definition of $  \nabla_\cM f(y) $ does not depend on the choice of $F$
(see e.g. \cite[Section 3.8]{boumal2020intromanifolds}). We refer to $ \nabla_{\cM} f(y)$
as the (Riemannian) gradient of $f$ at $\cM$ (here the Riemannian structure on $\cM$ is implicitely induced from the usual Euclidian scalar product on $\bbR^d$).

If $f:\cM\to\bbR$ is $C^p$, with $p \geq 2$, we define for every $y \in \cM$, the covariant (Riemannian)
Hessian of $f$ at $y$ as the linear operator $\cH_{f,\cM}(y) : \cT_y \cM \rightarrow \cT_y \cM$ given by:
$$
\cH_{f,\cM}(y):\ v\mapsto  P_{\cT_y\cM} J_G(y)v\,,
$$
where $G$ is a $C^{p-1}$ function defined in a neighborhood of $y$ which agrees with $\nabla_\cM f$ on $\cM$,
The definition of $\cH_{f,\cM}(y)$ does not depend on the choice of $G$ (see e.g. \cite[Section 5.5]{boumal2020intromanifolds}).

\subsection{Clarke Subdifferential}

Consider $f: \bbR^d \rightarrow \bbR$ a locally Lipschitz continuous function.
Denote by $\text{Reg}(f)$ the set of points $x$ at which $f$ is differentiable, and by $\nabla f(x)$ the corresponding gradient.,
By Rademacher's theorem, $f$ is differentiable almost everywhere. The \emph{Clarke subdifferential of $f$ at $x$} \cite{cla-led-ste-wol-livre98} is given by:
\begin{equation*}
 \partial f(x) :=  \overline{\conv} \{ v \in \bbR^d: \exists (x_n)\in \text{Reg}(f)^\bbN, (x_n,\nabla f(x_n))\to (x,v)\}\, .
\end{equation*}
That is, $\partial f(x)$ is the closed convex hull of the points of the form $\lim\nabla f(x_n)$ for some sequence $(x_n)$ converging to $x$.
In particular, $\partial f(x)$ simply coincides with $\{ \nabla f(x)\}$ when $f$ is continuously differentiable in a neighborhood of $x$.
We set $\cZ = \{ x\in \bbR^d: 0 \in \partial f(x)\}$. Every point of $\cZ$ is referred to as a Clarke critical point.
In particular, $\cZ$ includes the local minimizers and the local maximizers of $f$.

\begin{definition}[Path-differentiability]\label{def:path}
 A locally  Lipschitz continuous function $f: \bbR^d \rightarrow \bbR$ is said to be \emph{path-differentiable} if
 for every absolutely continuous curve $c:(0,1)\to\bbR^d$, one has for almost every $t\in (0,1)$,
 $$
 (f\circ c)'(t) = \ps{v,\dot{c}(t)},\ \ \forall v\in \partial f(c(t))\,.
 $$
\end{definition}

In non-smooth optimization, the path-differentiability condition is often a crucial hypothesis in order to obtain
relevant results \emph{e.g.}, on the subsequential convergence of iterates \cite{bolte2007clarke,dav-dru-kak-lee-19,bolte2019conservative}.
For instance, a sufficient condition on $f$, which ensure its path-differentiability, is that $f$ is \emph{definable}
w.r.t. an o-minimal structure. We review the concept of o-minimality in the appendix, for the interested reader.
Examples of definable functions include semialgebraic functions, analytic functions on a semialgebraic compact set,
exponential and logarithm (see e.g. \cite{bol_dan_lew09, bier_semi_sub, wil_o_min}).
Moreover, the set of definable functions is closed w.r.t. composition.
In particular, the loss of a neural network is in general a definable function \cite{dav-dru-kak-lee-19}.

 \subsection{Verdier Stratification}

 Let $A$ be a set in $\bbR^d$, a $C^p$ stratification of $A$ is a finite partition of $A$ into a family of \emph{strata} $(S_i)$ such that each of the $S_i$ is a $C^p$ submanifold, and such that:
 \begin{equation*}
   S_i \cap \overline{S}_j \neq \emptyset \implies S_i \subset \overline{S}_j \backslash S_j \, .
 \end{equation*}
 Given a family $\{ A_1, \dots, A_k\}$ of subsets of $A$, we say that a stratification $(S_i)$ is \emph{compatible with} $\{ A_1, \dots, A_k\}$, if each of the $A_i$ is a finite union of strata.
 We say that a stratification $(S_i)$ is definable, if every stratum $S_i$ is definable w.r.t. some o-minimal structure (see Appendix~\ref{sec:o-min}).
 If  $E_1, E_2$ are two vector spaces such that  $E_1 \neq \{ 0\}$, we define:
  \begin{equation}\label{eq:angle_vect}
   \bsd_a(E_1, E_2) = \sup_{u \in E_1, \norm{u} = 1} \dist(u, E_2)\, ,
 \end{equation}
 and we set $\bsd_a(\{ 0\}, E_2) = 0$.
 Note that $\bsd_a(E_1, E_2) = 0$ implies $E_1 \subset E_2$.
 \begin{definition}
   Let $(S_i)$ be a $C^p$ stratification of some set $A \subset \bbR^d$.
 We say that $(S_i)$ satisfies the \emph{Verdier property-(v)},
 if for every couple of distinct strata $S_i, S_j$ such that $S_i \cap \overline{S_j} \neq \emptyset$ and for each $y \in S_i$, there are two positive constants $\delta, C$ such that:
 \begin{equation}\label{eq:strat_verd}
   \begin{array}{ll}
     y' &\in B(y, \delta) \cap S_i \\
     x &\in B(y, \delta) \cap S_j
   \end{array} \implies \bsd_a(\cT_{y'} S_i, \cT_{x} S_j) \leq C \norm{y' - x} \, .
 \end{equation}
 In this case, we refer to $(S_i)$ as a Verdier $C^p$ stratification of $A$.
\end{definition}
A Verdier stratification is a special case of a Whitney stratification (we refer the reader to Appendix~\ref{sec:whitney}
for a review on Whitney stratifications).
Whereas the Whitney stratification can now be considered as well known in optimization community, the Verdier stratification is comparatively less popular.
In the framework of nonsmooth optimization, one of the main interests of the Verdier stratification over the Whitney stratification,
is given by Theorem~\ref{th:f_ver_strat} below,
which is one of the contributions of this paper.
Theorem~\ref{th:f_ver_strat} can be seen as a strengthening of the so-called ``projection formula'' for Whitney stratifiable functions,
which we recall in Lemma~\ref{lm:f_bol_strat} of Appendix~\ref{sec:whitney}, for the sake of completeness.

Before stating this result, we make two important remarks.
First, any locally Lipschitz continuous function $f: \bbR^d \rightarrow \bbR$ whose graph
admits a Verdier stratification, is path-differentiable in the sense of Definition~\ref{def:path}.
This is a consequence of Lemma~\ref{lem:whitney-implest-pathdiff} in Appendix~\ref{sec:o-min}.
Second, if $f$ is definable w.r.t. an o-minimal structure (see Appendix~\ref{sec:o-min}), then, for every $p\geq 1$, its graph
admits a Verdier $C^p$ stratification. This is a consequence of the following fundamental result.

\begin{proposition}[{\cite[Theorem 1.3]{loi98}}]\label{prop:Verdier-existe}
 Let $\{A_1, \dots, A_k\}$ be a family of definable sets of $\bbR^d$. For any $p \geq 1$, there is a Verdier $C^p$ stratification of $\bbR^d$ compatible with $\{A_1, \dots, A_k \}$.
\end{proposition}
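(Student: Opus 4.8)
Proposition~\ref{prop:Verdier-existe} is a statement of pure o-minimal geometry, and the plan is to reduce it to the $C^p$ cell decomposition theorem together with a genericity statement for Verdier's condition~\eqref{eq:strat_verd}; this is, in essence, the route of \cite{loi98}, which I would follow. First I would invoke the standard $C^p$ stratification theorem for o-minimal structures: there is a finite $C^p$ stratification of $\bbR^d$ with definable strata that is compatible with $\{A_1,\dots,A_k\}$. This already secures the frontier condition, the compatibility with $\{A_1,\dots,A_k\}$, and the definability of the strata, so the whole task is to refine such a stratification into one that additionally satisfies~\eqref{eq:strat_verd}. Throughout I would use the o-minimal frontier inequality $\dim(\overline{S}\setminus S)<\dim S$ for definable $S$: it guarantees that every pair of adjacent strata $S_i\subset\overline{S_j}\setminus S_j$ has $\dim S_i<\dim S_j$, so only pairs with a genuine dimension drop need to be controlled.

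The heart of the argument --- and the step I expect to be the main obstacle --- is the \emph{genericity} of condition~\eqref{eq:strat_verd}: for two definable $C^p$ submanifolds $X,Y$ with $X\subset\overline{Y}\setminus Y$, the set $\Sigma(X,Y)\subset X$ of points $y$ at which~\eqref{eq:strat_verd} \emph{fails} for the pair $(X,Y)$ is definable and satisfies $\dim\Sigma(X,Y)<\dim X$; equivalently, the Verdier property holds on a relatively open dense definable subset of $X$. Definability of $\Sigma(X,Y)$ is routine: the Gauss maps $y'\mapsto\cT_{y'}X$ and $x\mapsto\cT_x Y$ are definable into the relevant Grassmannians, $\bsd_a(\cdot,\cdot)$ is a definable supremum, hence $(y',x)\mapsto\bsd_a(\cT_{y'}X,\cT_x Y)/\norm{y'-x}$ is a definable function on $\{(y',x)\in X\times Y:\ y'\neq x\}$, and~\eqref{eq:strat_verd} at $y$ is precisely the local boundedness of this function near $(y,y)$ --- a definable condition on $y$. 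The substantive point is the dimension bound, which I would establish by induction on $\dim X$ in the manner of \cite{loi98}: using the o-minimal preparation / cell decomposition theorem to normalize the ratio function fiberwise, together with the definable curve selection lemma, one shows that blow-up of the ratio along a definable arc inside $X$ can occur only over a lower-dimensional definable subset (the ``wing'' argument), which confines $\Sigma(X,Y)$ to such a subset. Since $\dim X<d$, this can also be folded into an outer induction on $d$.

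Granting this lemma, the construction is a finite descent on dimension. Starting from the stratification $(S_i)$ above, I would set $Z:=\bigcup\overline{\Sigma(S_i,S_j)}$, the union over all adjacent pairs $S_i\subset\overline{S_j}\setminus S_j$; by the lemma and the frontier inequality, $\dim(Z\cap S_i)<\dim S_i$ for every $i$, so $Z$ is small inside each stratum. Next I would take a $C^p$ stratification $(S_i')$ with definable strata, refining $(S_i)$ and compatible with $\{A_1,\dots,A_k\}\cup\{S_i\}\cup\{Z\}$. Every new stratum $S'\subset S_i$ with $\dim S'=\dim S_i$ avoids $Z$, hence lies outside $\Sigma(S_i,S_j)$ for each old $S_j$ adjacent to it, so~\eqref{eq:strat_verd} holds for $(S',S_j)$; and since such a top-dimensional $S'$ sitting inside $S_j$ is open in $S_j$ (same tangent spaces), the Verdier property is inherited for all adjacent pairs among these ``top-in-their-old-stratum'' strata, the remaining lower-dimensional new target strata being handled by descending induction on the source dimension. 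All new strata that are not top-dimensional in their old stratum are contained in $Z$, and therefore have strictly smaller dimension than the old stratum containing them; consequently, repeating the whole procedure strictly decreases the maximal dimension of a stratum not yet known to satisfy~\eqref{eq:strat_verd}, so after finitely many passes one obtains a Verdier $C^p$ stratification of $\bbR^d$ compatible with $\{A_1,\dots,A_k\}$. Definability is preserved at each step because cell decomposition, closures, and finite unions stay within the o-minimal structure, and the constants $\delta,C$ in~\eqref{eq:strat_verd} can be chosen uniformly on the definable pieces by o-minimal boundedness of definable families.
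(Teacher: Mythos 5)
The paper does not prove this proposition at all: it is imported verbatim as \cite[Theorem 1.3]{loi98}, and everywhere in the text it is used as a black box. So there is no ``paper proof'' to match your argument against; the only meaningful comparison is with Loi's own construction, which your outline does follow in broad strokes (Whitney-type $C^p$ cell decomposition first, then a genericity statement for condition~\eqref{eq:strat_verd}, then a downward induction on dimension with refinement at each pass).

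As a proof, however, your write-up has a genuine gap exactly at its load-bearing step. The claim that the failure set $\Sigma(X,Y)\subset X$ is definable is indeed routine, but the dimension bound $\dim\Sigma(X,Y)<\dim X$ \emph{is} the theorem: it is the entire technical content of \cite{loi98}, and ``normalize the ratio fiberwise, use curve selection, the wing argument confines the blow-up to a lower-dimensional subset'' names the strategy without carrying it out (in particular, the needed Łojasiewicz-type inequality for definable functions and the construction of definable wings are nontrivial and are where Loi spends his effort). Deferring this to ``the manner of \cite{loi98}'' makes the argument circular as a replacement for the citation. A second, smaller issue is in your descent: you implicitly use that the Verdier condition for a pair $(S',S_j)$ passes to $(S',S''_j)$ for new strata $S''_j\subset S_j$. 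This is fine when $S''_j$ is open in $S_j$ (same tangent spaces), but for lower-dimensional pieces it fails, since $E_2'\subset E_2$ gives $\bsd_a(E_1,E_2')\ge\bsd_a(E_1,E_2)$ --- the inequality goes the wrong way. Those pairs must be re-certified by a fresh application of the genericity lemma in the next pass, so the induction has to be organized as a downward induction on the dimension of the source stratum with explicit bookkeeping of which pairs are already verified; your phrase about ``descending induction on the source dimension'' points in the right direction but does not pin this down.
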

Finally, we state the main contribution of this section, which can be interpreted as a strengthened version of
the projection formula (Lemma~\ref{lm:f_bol_strat}).

\begin{theorem}[Strengthened projection formula]\label{th:f_ver_strat}
     Let $f: \bbR^d \rightarrow \bbR $ be a definable, locally Lipschitz continuous function. Let $p$ be a positive integer. There is $(X_i)$, a definable Verdier $C^p$ stratification of $\bbR^d$, such that $f$ is $C^p$ on every stratum and for every couple of distinct strata $X_i, X_j$ such that $X_i \cap \overline{X_j} \neq \emptyset$ and for every $y \in X_i$, there is $C,\delta >0$, such that for any two points $y' \in  B(y,\delta) \cap X_i$, $x \in B(y, \delta) \cap X_j$,
     \begin{equation}\label{eq:ver_grad}
       \norm{P_{\cT_{y'}X_i}(\nabla_{X_j} f(x)) - \nabla_{X_i} f(y')} \leq C \norm{x- y'} \, ,
     \end{equation}
     and, moreover, for any $x \in B(y, \delta) \backslash X_i $ and any $v \in \partial f(x)$,
     \begin{equation}\label{eq:verd_subgrad}
       \norm{P_{\cT_{y'}X_i}(v) - \nabla_{X_i} f(y')} \leq C \norm{x - y'} \, .
     \end{equation}
\end{theorem}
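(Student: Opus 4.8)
The plan is to transfer all the required estimates from a single Verdier stratification of $\bbR^{d+1}$ adapted to the graph of $f$. Write $\Gamma:=\{(x,f(x)):x\in\bbR^d\}$, let $\pi:\bbR^{d+1}\to\bbR^d$ be the projection onto the first $d$ coordinates and $\iota:x\mapsto(x,f(x))$, so that $\pi|_\Gamma$ and $\iota$ are mutually inverse homeomorphisms and, $f$ being locally Lipschitz, $\iota$ is locally bi-Lipschitz. Since $\Gamma$ is definable, Proposition~\ref{prop:Verdier-existe} provides a definable Verdier $C^p$ stratification $(\Sigma_k)$ of $\bbR^{d+1}$ compatible with $\Gamma$, so that $\Gamma$ is a finite disjoint union of ``graph strata'' $\Sigma_k\subseteq\Gamma$. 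For such a stratum and $z\in\Sigma_k$, $\cT_z\Sigma_k$ contains no nonzero vertical vector: every tangent vector is $\dot c(0)$ for a curve $c(s)=(a(s),f(a(s)))$ in $\Gamma$, and $\dot a(0)=0$ forces $(f\circ a)'(0)=0$ by the Lipschitz bound. Hence $\pi|_{\Sigma_k}$ is an injective immersion and, being a homeomorphism onto its image, $X_k:=\pi(\Sigma_k)$ is a $C^p$ embedded submanifold of $\bbR^d$ with $\pi|_{\Sigma_k}:\Sigma_k\to X_k$ a $C^p$ diffeomorphism; since $f|_{X_k}$ is then the last coordinate of $(\pi|_{\Sigma_k})^{-1}$, it is $C^p$ on the manifold $X_k$, and a local flattening yields an ambient $C^p$ representative, so $f$ is $C^p$ on $X_k$ in the sense of Section~\ref{sec:subm}. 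The family $(X_k)_{\Sigma_k\subseteq\Gamma}$ is a definable partition of $\bbR^d$, and it satisfies the frontier condition: if $x\in X_i\cap\overline{X_j}$ then $\iota(x)\in\Sigma_i\cap\overline{\Sigma_j}$ by continuity of $\iota$, whence $\Sigma_i\subseteq\overline{\Sigma_j}\setminus\Sigma_j$ and, applying $\pi$, $X_i\subseteq\overline{X_j}\setminus X_j$. Thus $(X_k)$ is a definable $C^p$ stratification of $\bbR^d$ on whose strata $f$ is $C^p$; it remains to check that it is Verdier and satisfies \eqref{eq:ver_grad}--\eqref{eq:verd_subgrad}.

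Fix adjacent strata $X_i\subseteq\overline{X_j}\setminus X_j$, a point $y\in X_i$, and a local Lipschitz constant $L$ of $f$ near $y$. Two elementary facts will be used repeatedly. First, $\|\nabla_{X_k}f\|\le L$ near $y$, because the differential of $f|_{X_k}$ is bounded by $L$ on unit tangent vectors (again by the Lipschitz bound along a curve). Second, $\cT_{(x,f(x))}\Sigma_k=\{(v,\langle\nabla_{X_k}f(x),v\rangle):v\in\cT_xX_k\}$, so $\pi$ restricts there to a linear isomorphism onto $\cT_xX_k$ whose inverse has operator norm at most $\sqrt{1+L^2}$, while $\|\pi\|\le1$. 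I also use the purely linear fact that if a linear map $\pi$ with $\|\pi\|\le1$ is injective with inverse of norm $\le M$ on two subspaces $E_1,E_2$, then $\bsd_a(\pi E_1,\pi E_2)\le M\,\bsd_a(E_1,E_2)$ (lift a unit vector of $\pi E_1$ to a vector of $E_1$ of norm in $[1,M]$, approximate it within $M\,\bsd_a(E_1,E_2)$ by a vector of $E_2$, and project back). Now let $\delta_0,C_0$ be the constants from the Verdier property of $(\Sigma_k)$ at $\iota(y)$, and choose $\delta>0$ with $\iota(B(y,\delta))\subseteq B(\iota(y),\delta_0)$. For $y'\in B(y,\delta)\cap X_i$, $x\in B(y,\delta)\cap X_j$ and $z'=\iota(y')$, $w=\iota(x)$, the Verdier inequality $\bsd_a(\cT_{z'}\Sigma_i,\cT_w\Sigma_j)\le C_0\|z'-w\|$ together with the bi-Lipschitz bound on $\iota$ gives $\bsd_a(\cT_{z'}\Sigma_i,\cT_w\Sigma_j)\le C\|y'-x\|$, and pushing forward by $\pi$ yields $\bsd_a(\cT_{y'}X_i,\cT_xX_j)\le C\|y'-x\|$; hence $(X_k)$ is Verdier. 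For \eqref{eq:ver_grad}, given $v\in\cT_{y'}X_i$ the vector $\tilde v=(v,\langle\nabla_{X_i}f(y'),v\rangle)$ lies in $\cT_{z'}\Sigma_i$ with $\|\tilde v\|\le\sqrt{1+L^2}\|v\|$, so there is $\tilde v''=(v'',\langle\nabla_{X_j}f(x),v''\rangle)\in\cT_w\Sigma_j$ with $\|\tilde v-\tilde v''\|\le C\|v\|\,\|y'-x\|$; comparing first coordinates gives $\|v-v''\|\le C\|v\|\,\|y'-x\|$, comparing last coordinates gives $|\langle\nabla_{X_i}f(y'),v\rangle-\langle\nabla_{X_j}f(x),v''\rangle|\le C\|v\|\,\|y'-x\|$, and since $\|\nabla_{X_j}f(x)\|\le L$, replacing $v''$ by $v$ in the second inner product changes it by at most $L\|v-v''\|$, so $|\langle\nabla_{X_i}f(y'),v\rangle-\langle\nabla_{X_j}f(x),v\rangle|\le C\|v\|\,\|y'-x\|$. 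As $\langle\nabla_{X_j}f(x),v\rangle=\langle P_{\cT_{y'}X_i}\nabla_{X_j}f(x),v\rangle$ for $v\in\cT_{y'}X_i$, we obtain $|\langle P_{\cT_{y'}X_i}\nabla_{X_j}f(x)-\nabla_{X_i}f(y'),v\rangle|\le C\|v\|\,\|y'-x\|$ for all such $v$; taking $v$ equal to the tangent vector $P_{\cT_{y'}X_i}\nabla_{X_j}f(x)-\nabla_{X_i}f(y')$ proves \eqref{eq:ver_grad}.

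Finally, \eqref{eq:verd_subgrad} follows from \eqref{eq:ver_grad}. Shrinking $\delta$, using the finiteness of the stratification, we may assume $B(y,\delta)$ meets only $X_i$ and strata $X_k$ with $X_i\subseteq\overline{X_k}\setminus X_k$. For $x\in B(y,\delta)\cap X_k$ and $v\in\partial f(x)$, the projection formula of Lemma~\ref{lm:f_bol_strat} (valid since $(X_k)$ is Whitney and $f$ is $C^1$ on strata) gives $P_{\cT_xX_k}v=\nabla_{X_k}f(x)$, hence $v=\nabla_{X_k}f(x)+v^\perp$ where $v^\perp:=v-P_{\cT_xX_k}v$ is orthogonal to $\cT_xX_k$ and $\|v^\perp\|\le\|v\|\le L$. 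Then $\|P_{\cT_{y'}X_i}\nabla_{X_k}f(x)-\nabla_{X_i}f(y')\|\le C\|x-y'\|$ by \eqref{eq:ver_grad}, while for any unit $u\in\cT_{y'}X_i$ one has $\langle v^\perp,u\rangle=\langle v^\perp,u-P_{\cT_xX_k}u\rangle\le\|v^\perp\|\,\bsd_a(\cT_{y'}X_i,\cT_xX_k)\le C\|x-y'\|$, so $\|P_{\cT_{y'}X_i}v^\perp\|\le C\|x-y'\|$; adding the two bounds gives \eqref{eq:verd_subgrad}, and one concludes by taking the minimum of the finitely many $\delta$'s and the maximum of the corresponding constants over the strata adjacent to $X_i$. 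The main obstacle is the round trip between $\bbR^{d+1}$ and $\bbR^d$: one must use both the immersion property of $\pi$ on the graph strata (fortunately automatic from local Lipschitzness) and the bi-Lipschitz control of $\iota$ to keep all constants uniform, and recognise that the quantitative estimate $\bsd_a\le C\|\cdot\|$ — which is exactly what the Verdier condition buys over the Whitney condition — is precisely what upgrades the classical (qualitative) projection formula to \eqref{eq:ver_grad}--\eqref{eq:verd_subgrad}.
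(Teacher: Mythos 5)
Your proof is correct and follows the same overall strategy as the paper's: take a Verdier $C^p$ stratification of $\graph(f)$, push it down by the projection $\pi$ onto the first $d$ coordinates, and convert the quantitative tangent-space comparison between adjacent graph strata into \eqref{eq:ver_grad}--\eqref{eq:verd_subgrad}. Two implementation choices differ and are worth noting. First, the paper starts from the stratification of Lemma~\ref{lm:f_bol_strat} and takes a Verdier stratification of the graph \emph{compatible with it}; compatibility makes the projected family automatically a stratification on whose strata $f$ is $C^p$, and, more importantly, it transfers the projection formula to the refined strata by mere restriction of tangent spaces ($\cT_x X'_j\subset \cT_x X_{j_0}$ for the coarser stratum $X_{j_0}\supset X'_j$). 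You instead start from a Verdier stratification compatible only with $\graph(f)$ and rebuild these facts by hand (the no-vertical-tangent/immersion argument, the frontier condition, the smoothness of $f|_{X_k}$) — all of which is correct — but your appeal to ``the projection formula of Lemma~\ref{lm:f_bol_strat}'' for \emph{your} stratification is slightly loose: as stated, that lemma asserts the formula for one particular stratification it constructs. The formula does hold for any Whitney $C^1$ stratification of the graph on whose strata $f$ is smooth (rerun the limiting-gradient argument using the Whitney-(a) condition and the tangent-space description of the graph strata), but you should either say this explicitly or, as the paper does, take your Verdier stratification compatible with the one of Lemma~\ref{lm:f_bol_strat}. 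Second, for \eqref{eq:verd_subgrad} the paper simply substitutes $\ps{v,h_x}=\ps{\nabla_{X_j}f(x),h_x}$ into the same chain of inequalities used for \eqref{eq:ver_grad}, whereas you decompose $v$ into its tangential part $\nabla_{X_k}f(x)$ plus a normal part $v^\perp$ and bound $\norm{P_{\cT_{y'}X_i}v^\perp}$ by $\norm{v^\perp}\,\bsd_a(\cT_{y'}X_i,\cT_xX_k)\leq C\norm{x-y'}$ using the Verdier estimate on the \emph{projected} strata; this is a clean and correct alternative that makes explicit why the quantitative Verdier bound, rather than the qualitative Whitney one, is what controls the normal component of the subgradient.
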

\begin{proof}
  In this proof $C' >0$ will denote some constant that can change from line to line.
Consider $(S_i)$ and $(X_i)$ as in Lemma~\ref{lm:f_bol_strat}. We claim that for any index $j$ and $x \in X_j$, we have $\cT_{x, f(x)}S_j = \{ (h, \scalarp{\nabla_{X_i}f(x)}{h}) : h \in \cT_x X_j\}$. Indeed, consider $(h_x, h_f) \in \cT_{x, f(x)}S_j$ and a $C^p$ curve $c \colon (-\varepsilon, \varepsilon) \to \bbR^d$
s.t. $c((-\varepsilon, \varepsilon)) \subset S_j$, $c(0) = (x,f(x))$ and $\dot{c}(0) = (h_x, h_f)$. Consider a $C^p$ function $F$ that agrees with $f$ on $X_j$, then $(c_x(t), c_f(t)) = (c_x(t), F(c_x(t)))$ and we have $\dot{c}_x(0) = h_x$ and $\dot{c}_f(0) = \scalarp{\nabla F(x)}{h_x} = \scalarp{\nabla_{X_j}f(x)}{h_x}$.

Consider $(S_i')$ a Verdier stratification of $\graph (f)$ compatible with $(S_i)$. Then the projection of $S'_i$ onto its first $d$ coordinates, that we denote $X'_i$, is still a submanifold s.t. $f$ is $C^p$ on $X'_i$.
Consider $(y, f(y)) \in S'_i$, $S'_j$ a neighboring stratum and $C, \delta$ as in Equation~\eqref{eq:strat_verd}. Denote by $L$ the Lipschitz constant of $f$ on $B(y, \delta)$ and $\delta' = \frac{\delta}{L+1}$. Then, for every $x \in B(y, \delta')$, we have:
\begin{equation*}
\norm{(y, f(y)) - (x, f(x))} \leq (1+L)\norm{y - x} \leq \delta \, ,
\end{equation*}
that is to say $(x, f(x)) \in B((y, f(y)), \delta)$.

Consider $y' \in X'_i \cap B(y, \delta')$, $x \in X'_j \cap B(y, \delta')$ and $h_{y'} \in \cT_{y'}X'_i$ with $\norm{h_{y'}} = 1$. We have that  $(h_{y'}, \scalarp{\nabla_{X'_i} f(y')}{h_{y'}}) \in \cT_{(y', f(y'))}   S'_i$ and by the Verdier's condition there is $h_x \in \cT_{x}X'_j$ s.t.
\begin{equation*}
\norm{\frac{1}{c_h}\left(h_{y'}, \scalarp{\nabla_{X'_i} f(y')}{h_{y'}}\right) - (h_x , \scalarp{\nabla_{X'_j} f(x)}{h_x})} \leq C(L+1) \norm{x - y'} \, ,
\end{equation*}
where $c_h = \norm{(h_{y'}, \scalarp{\nabla_{X'_i} f(y')}{h_{y'}})} \leq C'$. Therefore,
\begin{equation*}
\norm{h_{y'}- c_h h_x} \leq C' \norm{x- y'} \quad \textrm{ and } \quad \norm{ c_h\scalarp{\nabla_{X'_j} f(x)}{h_x} - \scalarp{\nabla_{X'_i} f(y')}{h_{y'}}} \leq C' \norm{x - y'} \, .
\end{equation*}
Thus, it holds that:
\begin{equation}\label{eq:pf_ver_gra}
  \begin{split}
    \norm{\scalarp{\nabla_{X'_j} f(x)}{h_{y'} - c_h h_x} } + \norm{ c_h\scalarp{\nabla_{X'_j} f(x)}{h_x} - \scalarp{\nabla_{X'_i} f(y')}{h_{y'}}}
    \leq C' \norm{x - y' } \, .
  \end{split}
\end{equation}
 Noticing that by the projection formula, for all $v \in \partial f(x)$, it holds that $\scalarp{v}{ h_x} = \scalarp{\nabla_{X'_j} f(x)}{h_x}$, we also obtain for such a $v$,
 \begin{equation}\label{eq:pf_ver_sub}
   \begin{split}
     \norm{\scalarp{v}{h_{y'} - c_h h_x} } + \norm{ c_h\scalarp{v}{h_x} - \scalarp{\nabla_{X'_i} f(y')}{h_{y'}}}
     \leq C' \norm{x - y' } \, .
   \end{split}
 \end{equation}
Thus, using Equation~\eqref{eq:pf_ver_gra} and applying a triangle inequality, we obtain:
\begin{equation}\label{eq:pf_ver_gra_end}
  \norm{\scalarp{\nabla_{X'_j} f(x) - \nabla_{X'_i} f(y')}{ h_{y'}}} \leq C' \norm{x - y'} \, .
\end{equation}
And, similarly, using Equation~\eqref{eq:pf_ver_sub},
\begin{equation}\label{eq:pf_ver_sub_end}
\forall v \in \partial f(x)\, , \quad  \norm{\scalarp{v - \nabla_{X'_i} f(y')}{ h_{y'}}} \leq C' \norm{x - y'} \, .
\end{equation}

The proof is now completed by noticing that $h_{y'} \in \cT_{y'}X'_i$ was an arbitrary vector of unitary norm and that one can choose $C', \delta$ such that Equations~\eqref{eq:pf_ver_gra_end} and \eqref{eq:pf_ver_sub_end} hold uniformly for all $x \in B(y, \delta) \cap X_j'$, where $X_j'$ is any stratum neighboring $X_i'$.

\end{proof}

\begin{remark}\label{rmk:cons_field_ver}
  Equation~(\ref{eq:verd_subgrad}) in Theorem~\ref{th:f_ver_strat} remains true if the Clarke subdifferential $\partial f$
  is replaced by any, definable, set-valued map $D: \bbR^d \rightrightarrows \bbR^d$ which is a so-called
  \emph{conservative field for the potential $f$} \emph{i.e.}, the condition $v\in \partial f(x)$ can be replaced
  by $v\in D(x)$.
  The concept of conservative fields was introduced in \cite{bolte2019conservative},
  in order to circumvent the fact that automatic differentiation
  procedures such as those used in Tensorflow,
  do not necessarily produce Clarke subgradients. The Clarke subdifferential $\partial f$ is, among others,
  one example of a conservative field.
  It was shown in \cite[Theorem 4]{bolte2019conservative} that the projection formula (Lemma~\ref{lm:f_bol_strat}
  in this paper) still holds, if $\partial f$ is replaced by a definable conservative field $D$.
  Using this generalization in the proof of Theorem~\ref{th:f_ver_strat} instead of Lemma~\ref{lm:f_bol_strat},
  one conclude that the same generalization holds for our strengthened projection formula.
\end{remark}

\section{Active Strict Saddles}\label{sec:act_def_avoid}

In this section, $f : \bbR^d \rightarrow \bbR$ is supposed to be a
locally Lipschitz continuous function.  We recall the definition
$\cZ:=\{x\in \bbR^d:0\in \partial f(x)\}$.

\subsection{Definition and Existing Results}

Let $p\geq 2$ be an integer.
\begin{definition}[Active manifold\footnote{We must notice here that the notion of an active (or, as it sometimes referred to, identifiable) manifold is closely related to the notion of partial smoothness introduced in \cite{Lewis2002ActiveSN}. Indeed, as it was shown in \cite[Proposition 8.4]{drus_lew_optim_sens} both are equivalent under a non-degeneracy condition: $0$ is in the relative interior of the proximal subdifferential of $f$ at $x^*$.},
 \cite{drus_lew_optim_sens}]\label{def:act_man}
   Consider $x^*\in \cZ$.
   A set $\cM\subset \bbR^d$ is called a \emph{$C^p$ active manifold  around $x^*$}, if there is a neighborhood $U$ of $x^*$ such that the following holds.
   \begin{enumerate}[i)]
     \item \textbf{Smoothness condition:} $\cM \cap U$ is a $C^p$ submanifold and $f$ is $C^p$ on $\cM \cap U$.
     \item \textbf{Sharpness condition:}
     \begin{equation*}
       \inf\{ \norm{v}: v \in \partial f(x), x \in U \backslash \cM \} > 0\, .
     \end{equation*}
   \end{enumerate}
 \end{definition}

 \begin{definition}[Active strict saddle]\label{def:act_str}
   We say\footnote{The definition of active strict saddles provided
     in \cite{dav_dru21} involves the notion of parabolic
     subderivatives.  In this paper, we found convenient to use the
     equivalent Definition \ref{def:act_str}, which is closer in
     spirit to notions of differential geometry.}
   that a point $x^* \in \cZ$ is an \emph{active strict saddle (of order $p$)} if there exists a $C^p$ active manifold $\cM$ around $x^*$,
   and a vector $w \in \cT_{x^*} \cM$, such that $\nabla_{\cM}f(x^*) = 0$ and $ \scalarp{w}{\cH_{f,\cM}(x^*)(w)}< 0$.\\
   We say that $f$ satisfies the \emph{active strict saddle
     property (of order $p$)}, if it has a finite number of Clarke critical points, and each of these points is either an active
   strict saddle of order $p$ or a local minimizer.
 \end{definition}
In the special case of a \textbf{smooth} function $f$, the space $M=\bbR^d$ is trivially an active manifold around any critical point $x^*$ of $f$.
If $x^*$ is moreover a \emph{trap} in the sense provided in the introduction (\emph{i.e.}, the Hessian matrix of $f$ at $x^*$ admits a negative eigenvalue),
then $x^*$ is trivially an active strict saddle. Hence, the smooth setting can be handled as a special case.

 The archetype of an active strict saddle is given by the following example.

\begin{example}\label{ex:act_strict}
 The point $(0,0)$ is an active strict saddle of the function $f : \bbR^2 \rightarrow \bbR$ given by $f(y,z) = - y^2 + |z|$. Indeed,
 \begin{equation*}
   \partial f((y,z)) =  \begin{cases} \{(-2y, 1)\} \textrm{ if } z >0 \, ,\\
   \{(-2y, -1)\} \textrm{ if } z < 0 \, ,\\
  \{ -2y\} \times [-1, 1] \textrm{ otherwise } \, ,
 \end{cases}
\end{equation*}
and the set $\cM = \bbR \times \{ 0\}$ is a $C^{2}$ active manifold.
Moreover, $\nabla_{\cM}f((y, 0)) = (-2y, 0)$ and the scalar product between $(1,0)$ and $\cH_{f,\cM}(0)((1, 0))$ is equal to $-2$.
\end{example}
While the definition of an active strict saddle might seem peculiar at first glance, the
following proposition of Davis and Drusvyatskiy shows that a generic
definable and weakly convex function satisfies a strict saddle
property. The proof is grounded in the work of \cite{drus_iof_lew_generic}.

 \begin{proposition}[{\cite[Theorem 2.9]{dav_dru21}}]\label{prop:activ_gen}
   Assume that $f$ is definable and weakly convex.
   Define $f_u(x) := f(x) - \scalarp{u}{x}$, for every $u\in \bbR^d$.
   Then, for every $p\geq 2$ and for Lebesgue-almost every $u\in \bbR^d$, $f_u$ has the active strict saddle property of order $p$.
 \end{proposition}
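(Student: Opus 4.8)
We only outline the argument, which is due to \cite{dav_dru21} and builds on the genericity results of \cite{drus_iof_lew_generic}. Since $\partial f_u(x) = \partial f(x) - u$, a point $x$ is a Clarke critical point of $f_u$ if and only if $u \in \partial f(x)$, so $\cZ(f_u)$ equals the $u$-fiber of $\graph \partial f := \{(x,v) : v \in \partial f(x)\}$. The plan is to prove that, for Lebesgue-almost every $u$: (i) this fiber is finite; (ii) for each $x^* \in \cZ(f_u)$ one has $u \in \mathrm{relint}\,\partial f(x^*)$; and (iii) $x^*$ lies in a $C^p$ active manifold $\cM$ for $f_u$ on which the Riemannian Hessian $\cH_{f_u,\cM}(x^*)$ is non-singular.

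Granting (i)--(iii), the dichotomy of Definition~\ref{def:act_str} follows. Fix $x^* \in \cZ(f_u)$ and the associated manifold $\cM$. The projection formula (Lemma~\ref{lm:f_bol_strat}) gives $\nabla_\cM f_u(x^*) = P_{\cT_{x^*}\cM}(\nabla f(x^*) - u) = 0$. If $\cH_{f_u,\cM}(x^*)$ has a negative eigenvalue, $x^*$ is by definition an active strict saddle. If $\cH_{f_u,\cM}(x^*) \succ 0$, then $\left.f_u\right|_\cM$ has a strict local minimum at $x^*$; combining this with the sharpness condition of Definition~\ref{def:act_man} and the weak convexity of $f$ — which together imply that near $x^*$ the function $f_u$ grows at least linearly away from $\cM$, i.e.\ $f_u(x) \geq f_u(P_\cM(x)) + c\,\dist(x,\cM)$ for some $c>0$ — one obtains $f_u(x) \geq f_u(x^*)$ for $x$ near $x^*$, so $x^*$ is a local minimizer. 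Weak convexity is essential precisely here, since without it the sharpness of $f$ off $\cM$ could instead reflect a sharp ridge.

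Items (i) and (ii) are dimension-theoretic. The crucial input is that $\graph\partial f$ is a definable set of dimension $d$; this is obtained from the projection formula by a stratified count (over a stratum $S$ of dimension $k$, the Riemannian gradient contributes $k$ and the ``vertical'' part of $\partial f$ lies in the $(d-k)$-dimensional normal space). The definable fiber-dimension theorem then shows that $\{u : \dim (\graph\partial f)_u \geq 1\}$ is definable of dimension at most $d-1$, hence Lebesgue-null; a $0$-dimensional definable set being finite, this yields (i). For (ii), since the fibers $\partial f(x)$ are convex, one stratifies so that $\dim\partial f(x)$ is locally constant and the fibers depend continuously on $x$, whence $\{(x,v):v\in\mathrm{rbd}\,\partial f(x)\}$ has dimension at most $d-1$; so for almost every $u$ no $x$ satisfies $u \in \mathrm{rbd}\,\partial f(x)$, which together with $u\in\partial f(x^*)$ gives $u\in\mathrm{relint}\,\partial f(x^*)$.

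For (iii), fix a definable $C^{p}$ (in fact Verdier, Proposition~\ref{prop:Verdier-existe}) stratification $(X_i)$ of $\bbR^d$, refined so that $f$ is $C^p$ on each stratum, and let $X_{i^*}$ be the stratum containing $x^*$. By the projection formula $x^*$ is a critical point of $\left.f_u\right|_{X_{i^*}}$, and the relative-interior condition (ii), through the identifiability/partial-smoothness theory for definable functions (Drusvyatskiy--Lewis \cite{drus_lew_optim_sens}, cf.\ the footnote to Definition~\ref{def:act_man}), implies that a neighborhood of $x^*$ in $X_{i^*}$ is a $C^p$ active manifold for $f_u$ — in particular its sharpness condition holds. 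Non-degeneracy of the restricted Hessian is a Sard argument: writing $F_i$ for a $C^p$ representative of $f$ near $X_i$, the ``conormal map'' $\kappa_i(x,\nu) := \nabla F_i(x) + \nu$ on the normal bundle $\{(x,\nu) : x\in X_i,\ \nu \perp \cT_x X_i\}$ is a definable $C^{p-1}$ map from a $d$-dimensional manifold, a point $x\in X_i$ is a critical point of $\left.f_u\right|_{X_i}$ iff $u \in \kappa_i(\{x\}\times(\cT_x X_i)^\perp)$, and it is a non-degenerate critical point iff $u$ is a regular value of $\kappa_i$. By the definable Morse--Sard theorem the critical values of each $\kappa_i$ form a definable null set, so for almost every $u$ all critical points of all $\left.f_u\right|_{X_i}$ are non-degenerate; applied to $X_{i^*}$ this is exactly the invertibility of $\cH_{f_u,\cM}(x^*)$, giving (iii). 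Intersecting the finitely many null sets from (i)--(iii) completes the argument. The main obstacle is not the Sard step but the two structural facts behind (i) and (ii) — that $\graph\partial f$ has dimension $d$, and that the relative-interior condition is equivalent to the existence of an active manifold in the definable weakly convex setting — both of which come from \cite{drus_iof_lew_generic, drus_lew_optim_sens}; a further subtlety is the bookkeeping that reconciles a Morse critical point of the restriction $\left.f_u\right|_{X_{i^*}}$ with a genuine Clarke critical point of $f_u$, for which the (strengthened) projection formula of Lemma~\ref{lm:f_bol_strat} and Theorem~\ref{th:f_ver_strat} is the right tool.
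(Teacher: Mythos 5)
The paper does not prove this proposition at all: it is imported verbatim as \cite[Theorem 2.9]{dav_dru21}, so there is no internal argument to compare yours against. Your sketch is a faithful reconstruction of the architecture of the cited proof (and of its backbone in \cite{drus_iof_lew_generic}): finiteness of $\cZ(f_u)$ via the dimension count on $\graph\partial f$, the relative-interior condition yielding an active manifold through identifiability, a definable Sard argument for non-degeneracy of the restricted Hessian, and the dichotomy ``negative eigenvalue'' versus ``local minimizer'' using the linear growth of $f_u$ off $\cM$ (which is exactly the \cite[Theorem D.2]{dav_dru_cha19} estimate the paper itself invokes later in the proof of Theorem~\ref{th:activ_gen_ver}). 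Two small points of hygiene: in the display $\nabla_\cM f_u(x^*) = P_{\cT_{x^*}\cM}(\nabla f(x^*) - u)$ you should not write $\nabla f(x^*)$, since $f$ need not be differentiable at $x^*$; the projection formula gives $P_{\cT_{x^*}\cM}(v-u)=\nabla_\cM f_u(x^*)$ for every $v\in\partial f(x^*)$, and one takes $v=u$. And be explicit that the heavy lifting in your items (i)--(iii) — in particular that $\graph\partial f$ is $d$-dimensional and that the relative-interior condition produces an active manifold — is being delegated to \cite{drus_iof_lew_generic,drus_lew_optim_sens}, which is precisely the black-box status the present paper gives the whole proposition.
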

 It is worth noting that the result of \cite[Theorem 2.9]{dav_dru21} is in fact a bit stronger than Proposition~\ref{prop:activ_gen},
 because it states moreover that for almost all $u$, the cardinality of the set of Clarke critical points of $f_u$ is upper bounded
 by a finite constant which depends only on $f$.

 One can wonder if Proposition~\ref{prop:activ_gen} may still hold if $f$ is definable and locally Lipschitz, but not weakly convex.
The answer is negative, as shown by the following example.
 \begin{example}\label{ex:non-wcvx}
   Let $f: \bbR^2 \rightarrow \bbR$ be defined as $f(y,z) = -|y| + |z|$. Then for any $u \in B(0, 1)$, $(0,0)$ is a critical point for $f_u$, but is neither a local minimum nor an active strict saddle.
 \end{example}

 \subsection{Verdier and Angle Conditions}\label{sec:ver_ang}

 On the top of the items {\it i-ii)} of Definition~\ref{def:act_man},
 we introduce the following useful conditions.

 \begin{definition}\label{def:ver_ang}
   Let $\cM$ be a $C^2$ active manifold around some
   $x^*\in\cZ$. We say that $\cM$ satisfies the Verdier condition
   and the angle condition, if there is $U$ a neighborhood of $x^*$ such that the following conditions hold respectively.
\begin{enumerate}[i)]\setcounter{enumi}{2}
     \item\label{verd}\textbf{Verdier condition.} There is $C \geq 0$, such that for  every $y \in \cM \cap U$ and every $x \in U$,
     \begin{equation*}
        \quad \norm{P_{\cT_{y}\cM} (v) - \nabla_{\cM}f(y) } \leq C \norm{x - y}, \quad{}  \forall v \in \partial f(x)\, .
     \end{equation*}\\
   \item\label{angle} \textbf{Angle condition.}
     There is $\beta >0$, such that for every $x \in U$ and for every $v \in \partial f(x)$,
     \begin{equation*}
       \scalarp{v}{x -P_{\cM}(x)} \geq \beta \norm{x -P_{\cM}(x)}
  \, .
     \end{equation*}
   \end{enumerate}
 \end{definition}
 \begin{definition}
   An active strict saddle $x^*$ is said to satisfy the \emph{Verdier and angle conditions}, if the
   active manifold $M$ in Definition~\ref{def:act_str} satisfies the Verdier and angle conditions.
   The function $f$ is said to satisfy the \emph{active strict saddle property of order $p$ with the Verdier and angle conditions}, if
   it satisfies the active strict saddle property of order $p$ and if every active strict saddle satisfies the Verdier and angle conditions.
 \end{definition}
 The Verdier condition merely states that $\cM$ is one of the stratum
 of the Verdier stratification of Theorem~\ref{th:f_ver_strat}.
The purpose of the angle condition is to ensure that, close to $\cM$, the subgradients of $f$ at $x$ are always directed outwards of $\cM$.
The latter will allow us to
prove that the iterates of SGD  converge to
$\cM$ fast enough. In the concurrent work of \cite{davis_subg_active21} these conditions were named as \textit{strong (a)} and \textit{proximal aiming} conditions.

The following theorem strengthens the genericity result of Proposition~\ref{prop:activ_gen} by establishing that the active strict saddle property with the Verdier and angle conditions is satisfied by a generic definable and weakly convex function. We recall the notation $f_u(x) = f(x)-\ps{u,x}$.
\begin{theorem}\label{th:activ_gen_ver}
 Assume that $f : \bbR^d \rightarrow \bbR$ is a definable, weakly
 convex function. For every $p \geq 2$, and for Lebesgue-almost every $u\in \bbR^d$,
 $f_u$ satisfies the active strict saddle property of order $p$ with the
 Verdier and angle conditions.
\end{theorem}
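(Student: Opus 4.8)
The plan is to combine three ingredients: the genericity of the active strict saddle property (Proposition~\ref{prop:activ_gen}), the strengthened projection formula (Theorem~\ref{th:f_ver_strat}), and the weak convexity of $f_u$ — the last being what ultimately forces the angle condition. Fix $p\ge 2$ and let $N\subset\bbR^d$ be the full-measure set provided by Proposition~\ref{prop:activ_gen}; for $u\in N$ the function $f_u$ is definable and weakly convex (adding a linear term changes neither property, with the same modulus $\rho$) and has the active strict saddle property of order $p$. Denote by $x_1^*,\dots,x_k^*$ its finitely many Clarke critical points, and for each index $\ell$ for which $x_\ell^*$ is an active strict saddle fix a $C^p$ active manifold $\cM_\ell$ around $x_\ell^*$ witnessing the negative curvature, i.e.\ with $\nabla_{\cM_\ell}f_u(x_\ell^*)=0$ and $\scalarp{w_\ell}{\cH_{f_u,\cM_\ell}(x_\ell^*)w_\ell}<0$ for some $w_\ell$. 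By the stratification-based construction underlying Proposition~\ref{prop:activ_gen} (see \cite{drus_iof_lew_generic,dav_dru21}) each $\cM_\ell$ may be chosen definable, and local uniqueness of active manifolds (\cite{drus_lew_optim_sens}) makes this choice essentially canonical.

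Next I would feed the finite definable family $\{\graph f_u,\cM_1,\dots,\cM_k\}$ into the construction behind Theorem~\ref{th:f_ver_strat}: starting the projection-formula stratification compatibly with this family and then refining it to a Verdier one as in the proof of that theorem, one obtains a definable Verdier $C^p$ stratification $(X_i)$ of $\bbR^d$ on whose strata $f_u$ is $C^p$, satisfying \eqref{eq:ver_grad}--\eqref{eq:verd_subgrad}, and — since each $\cM_\ell$ is a definable $C^p$ submanifold that is locally closed near $x_\ell^*$, and Verdier $C^p$ stratifications prescribing such a submanifold as one of the strata exist (\cite{loi98}) — such that $\cM_\ell$ coincides near $x_\ell^*$ with a single stratum $X_{i_\ell}$. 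Then $X_{i_\ell}$ is again a $C^p$ active manifold around $x_\ell^*$, sharing with $\cM_\ell$ the tangent space, Riemannian gradient and Riemannian Hessian at $x_\ell^*$, so it still witnesses the active strict saddle. The Verdier condition for $\cM=X_{i_\ell}$ is now read off from Theorem~\ref{th:f_ver_strat}: evaluating \eqref{eq:verd_subgrad} at the single point $y=x_\ell^*$ yields constants $C,\delta>0$ with $\norm{P_{\cT_{y'}\cM}(v)-\nabla_\cM f_u(y')}\le C\norm{x-y'}$ for all $y'\in B(x_\ell^*,\delta)\cap\cM$, $x\in B(x_\ell^*,\delta)\setminus\cM$ and $v\in\partial f_u(x)$, which is precisely the required bound when $x\notin\cM$; for $x\in\cM\cap B(x_\ell^*,\delta)$ one instead combines the ordinary projection formula $P_{\cT_x\cM}(v)=\nabla_\cM f_u(x)$ (Lemma~\ref{lm:f_bol_strat}) with the local Lipschitzness on $\cM$ of $y\mapsto P_{\cT_y\cM}$ and of $\nabla_\cM f_u$ (both valid because $\cM$ is $C^2$ and $f_u$ is $C^p$ on it). Choosing $U$ to be a common ball gives the Verdier condition.

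For the angle condition I would argue through weak convexity; this is the step where the hypothesis is essential, Example~\ref{ex:non-wcvx} showing it cannot be dispensed with. Keep $\cM=X_{i_\ell}$ and let $\mu:=\inf\{\norm{v}:v\in\partial f_u(x),\ x\in U\setminus\cM\}>0$ be the sharpness constant. For $x\in U$ put $y^*:=P_\cM(x)$; the inequality is trivial when $x\in\cM$, so assume $x\notin\cM$ and recall $x-y^*\perp\cT_{y^*}\cM$. Fix $v\in\partial f_u(x)$; for any $w\in\partial f_u(y^*)$, monotonicity of $\partial g$ with $g:=f_u+\rho\norm{\cdot}^2$ convex gives $\scalarp{v-w}{x-y^*}\ge-2\rho\norm{x-y^*}^2$, hence
\[
\scalarp{v}{x-y^*}\;\ge\;\scalarp{w}{x-y^*}-2\rho\norm{x-y^*}^2 .
\]
It therefore suffices to produce $w\in\partial f_u(y^*)$ with $\scalarp{w}{x-y^*}\ge(\mu/2)\norm{x-y^*}$. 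Such a $w$ is obtained as a limit along the normal segment from $y^*$ toward $x$ of subgradients of $f_u$, whose norms are $\ge\mu$ by sharpness and whose tangential part equals $\nabla_\cM f_u$ of the base point, hence is of size $O(\norm{y^*-x_\ell^*})$ by the projection formula and $\nabla_\cM f_u(x_\ell^*)=0$: this forces the normal component of $\partial f_u(y^*)$ to have norm at least $\mu/2$ on $U$ small, and that this normal component is moreover correctly \emph{oriented} along $x-y^*$ is exactly the ``proximal aiming'' mechanism of \cite{dav_dru21,davis_subg_active21}, which rests on the weak convexity of $f_u$ together with the non-degeneracy $0\in\operatorname{ri}\partial f_u(x_\ell^*)$ (which for $u\in N$ is either automatic from weak convexity and sharpness or holds after intersecting $N$ with a further full-measure set, as in \cite{drus_iof_lew_generic,dav_dru21}). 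Plugging this into the display and shrinking $U$ so that $2\rho\norm{x-y^*}\le\mu/4$ there yields $\scalarp{v}{x-y^*}\ge(\mu/4)\norm{x-y^*}$, i.e.\ the angle condition with $\beta=\mu/4$.

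Running this for each of the finitely many active strict saddles and intersecting the resulting neighborhoods, $f_u$ satisfies the active strict saddle property of order $p$ with the Verdier and angle conditions for every $u\in N$, which proves the theorem. The step I expect to be the main obstacle is the angle condition: one must check that for the generic perturbation the Clarke subdifferential of $f_u$ at the active strict saddle is non-degenerate, and that weak convexity upgrades the sharpness norm bound into a \emph{signed} aiming estimate (this is precisely where Example~\ref{ex:non-wcvx} becomes an obstruction in the non-weakly-convex case); by comparison, the bookkeeping that realizes $\cM_\ell$ as a single Verdier stratum near $x_\ell^*$ and deduces the Verdier condition from Theorem~\ref{th:f_ver_strat} is routine.
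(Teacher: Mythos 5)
Your overall architecture matches the paper's: the Verdier condition comes from realizing the active manifold as a stratum of the Verdier stratification of Theorem~\ref{th:f_ver_strat}, and the angle condition is where weak convexity enters. But your treatment of the angle condition has a genuine gap. After the monotonicity step $\scalarp{v}{x-y^*}\ge\scalarp{w}{x-y^*}-2\rho\norm{x-y^*}^2$ (which is essentially the paper's first inequality, i.e.\ the weak-convexity subgradient inequality between $x$ and $P_\cM(x)$), everything hinges on producing $w\in\partial f_u(y^*)$ with $\scalarp{w}{x-y^*}\ge(\mu/2)\norm{x-y^*}$. Your argument for this is not a proof: sharpness bounds $\norm{w}$ from below and the projection formula controls the tangential part, so the normal component of $w$ has norm close to $\mu$, but nothing you write shows that this normal component points \emph{toward} $x$ rather than in some other normal direction --- for $f(y,z)=-y^2-|z|$, which satisfies sharpness, the Verdier condition and the projection formula, it points the wrong way; that is exactly Example~\ref{ex:non-angle}. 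You acknowledge the difficulty by deferring to ``the proximal aiming mechanism'' of \cite{dav_dru21,davis_subg_active21} and to a non-degeneracy condition $0\in\operatorname{ri}\partial f_u(x^*)$, but that orientation statement is precisely what has to be proved, and the non-degeneracy condition is not used in the paper. The ingredient the paper actually invokes, and which your write-up never supplies, is the sharp-growth estimate of \cite[Theorem D.2]{dav_dru_cha19}: weak convexity together with the active manifold forces $f_u(x)\ge f_u(P_\cM(x))+\alpha\norm{x-P_\cM(x)}$ near $x^*$. With that in hand the angle condition is a one-line consequence of your own display (the subgradient inequality gives $f_u(P_\cM(x))-f_u(x)\ge\scalarp{v}{P_\cM(x)-x}-\rho\norm{x-P_\cM(x)}^2$, and sharp growth bounds the left-hand side above by $-\alpha\norm{x-P_\cM(x)}$); without citing or reproving that growth estimate, the signed aiming property remains unsubstantiated.

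A secondary, repairable issue concerns the Verdier part: you fix the active manifolds $\cM_\ell$ first and then build a Verdier stratification compatible with them. Compatibility only makes each $\cM_\ell$ a \emph{union} of strata, and the stratum containing $x_\ell^*$ could a priori be a lower-dimensional piece of that union, in which case it need not be an active manifold carrying the same tangent space and Hessian witness. The paper avoids this by reversing the order: it fixes the Verdier stratification of Theorem~\ref{th:f_ver_strat} first and observes that the genericity construction of \cite{drus_iof_lew_generic,dav_dru21} can be run so that the active manifold it produces is already adapted to that stratification.
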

\begin{proof}
   Let $\{X_1, \dots, X_k \}$ be the $C^p$ Verdier stratification from Theorem~\ref{th:f_ver_strat}. Upon noticing that in the proof of \cite[Corollary 4.8 and Theorem 4.16]{drus_iof_lew_generic} the active manifold \footnote{The name \emph{active manifold} follows from the work of \cite{dav_dru21}, while in \cite{drus_iof_lew_generic} they are called identifiable manifolds. Both terminologies are usual and go back at least to \cite{burke_more_88, wright_93}.} can be chosen adapted to $\{X_1, \dots, X_k \}$, the existence of an active manifold with a Verdier condition follows from \cite[Theorem 2.9, Appendix A]{dav_dru21}. To prove the angle condition note that by weak convexity of $f$ there is $\rho \geq 0$ such that:
 \begin{equation*}
   f(P_{\cM}(x)) - f(x) \geq \scalarp{v}{P_{\cM}(x) - x} - \rho \norm{x - P_{\cM}(x)}^2 \quad \forall v \in \partial f(x) \, .
 \end{equation*}
 Furthermore, it was noticed in \cite[Theorem D.2]{dav_dru_cha19} that weak convexity of $f$ implies the existence of $\alpha >0$ such that for $x$ close enough to $x^*$, it holds:
 \begin{equation*}
   f(x) \geq f(P_{\cM}(x)) + \alpha \norm{P_{\cM}(x) -x} \, .
 \end{equation*}
 Combining both inequalities, we obtain:
 \begin{equation*}
   \forall v \in \partial f(x), \quad \scalarp{v}{x -P_{\cM}(x)} \geq \alpha \norm{x- P_{\cM}(x)} - \rho \norm{x - P_{\cM}(x)}^2 \, .
 \end{equation*}
 Taking $U = B(x^*, r)$, with $r$ small enough, we see that the angle condition is satisfied.
\end{proof}

\begin{remark}
 Let $M$ be an active manifold around $x^*$. It is clear from the proof of Theorem~\ref{th:activ_gen_ver}, that when $f$ is weakly convex, $M$ always satisfies the angle condition.
 Otherwise stated, the angle condition is simply \emph{true} in case of weakly convex functions.
 One may wonder if there are examples of (non-weakly convex) functions that have active strict saddles \emph{without} the angle condition. The following generic example exhibits one of those.
\end{remark}
\begin{example}\label{ex:non-angle}
 The function $f: \bbR^2 \rightarrow \bbR$ given by $f(y,z) = -y^2 - |z|$ is not weakly convex.
 Its unique Clarke critical point $(0, 0)$ is an active strict saddle, satisfying the Verdier condition but \emph{not} satisfying the angle condition. Notice, furthermore, that this example is generic in the following sense: if $\norm{u} < 1$, then $(0,0)$ is also an active strict saddle of $f_u$, satisfying the same properties.
\end{example}

\section{Avoidance of Active Strict Saddles}\label{sec:avoid_traps}

Let $f : \bbR^d \rightarrow \bbR$ be a locally Lipschitz continuous function.
On a probability space $(\Omega, \mcF, \bbP)$,
consider a random variable $x_0$ and random sequences $(v_n)$, $(\eta_{n})$ on $\bbR^d$.
Define the iterates:
\begin{equation}\label{eq:sgd}
 x_{n+1} = x_n - \gamma_n v_n + \gamma_n \eta_{n+1} \, ,
\end{equation}
where $(\gamma_n)$ is a deterministic sequence of positive numbers.
Let $(\mcF_n)$ be a filtration on $(\Omega, \mcF, \bbP)$.
\begin{assumption}\label{hyp:model}\-
 \begin{enumerate}[i)]
   \item The function $f$ is path differentiable.
   \item For every $n \in \bbN$, $v_n\in \partial f(x_n)$.
   \item The sequences $(v_n)$, $(\eta_n)$ are adapted to $(\mcF_n)$, and $x_0$ is $\mcF_0$-measurable.
   \item\label{hyp:steps} There are constants $c_1, c_2  >0$ and $\alpha \in (1/2, 1]$ s.t. for all $n \geq 1$:
   \begin{equation*}
     \frac{c_1}{n^\alpha}\leq \gamma_n \leq \frac{c_2}{n^\alpha} \, .
   \end{equation*}
 \end{enumerate}
\end{assumption}
Consider a point $x^*\in \cZ$.
\begin{assumption}\label{hyp:activ_strict}
 The point $x^*$ is an active strict saddle of order 5 satisfying the Verdier and angle conditions.
\end{assumption}

  The reason for which we posit a smoothness condition of order 5 (while the definition of an active strict saddle
  only requires the order 2) will be clear from Section~\ref{sec:proof_main}. It is related to the fact that, in our proof,
  $\nabla (F \circ P_{\cM})$ need to be $C^3$, where $F$ is any smooth representative of $f$ on $\cM$.
  By Lemma~\ref{lm:proj} this will be obtained as soon as $\cM$ is a $C^5$ manifold.

  We state our last assumption.
Interpreting the map $\cH_{f, \cM}(x^*)\circ P_{\cT_{x^*} \cM}$ as a quadratic form on $\bbR^d$,
we write $\bbR^d = E^{-} \oplus E^{+}$, where $E^{-}$
(respectively $E^{+}$) is the vector space spanned by the eigenvectors
that have negative (respectively nonnegative) eigenvalues. By results of
Section~\ref{sec:subm},  $E^{-} \subset \cT_{x^*} \cM$. Moreover, by
Assumption~\ref{hyp:activ_strict}, we note that $\dim E^{-} \geq 1$.

\begin{assumption}\label{hyp:noise_exit}
 The following holds almost surely on the event $[x_n \rightarrow x^*]$.
 \begin{enumerate}[i)]
   \item $\bbE[\eta_{n+1} | \mcF_n] = 0$, for all $n$.
   \item $\limsup \bbE[ \norm{\eta_{n+1}}^4 | \mcF_n]< + \infty$.
   \item
     Denote $\eta_{n+1}^{-}$ the projection of $\eta_{n+1}$ onto $E^-$. We have:
   \begin{equation*}
     \liminf\bbE[\norm{\eta_{n+1}^{-}} | \mcF_n] > 0
 \end{equation*}
 \end{enumerate}
\end{assumption}

\begin{remark}
We discuss Assumption~\ref{hyp:noise_exit}. The first point is standard. The third point ensures that $\eta_{n}$ explores the negative curvature space $E^-$, so that the iterates must eventually escape $x^*$,
as will be explained in Section~\ref{sec:proof_main}.
The second point deserves more comments.  In order to establish the convergence of SGD toward the set $\cZ$,
the assumption $\limsup \bbE[ \norm{\eta_{n+1}}^2 | \mcF_n]< + \infty$ is standard.
However, in order to establish the avoidance of spurious critical point, this assumption should be strengthened by requiring the boundedness of the (conditional) fourth order moments.
Comparatively to this paper, \cite{pem-90, ben-(cours)99} make the stronger assumption that the sequence $(\eta_{n})$ is by a deterministic constant,
whereas the  concurrent work of \cite{davis_subg_active21}, which has been submitted shortly after the present paper, assumes
a boundedness condition on eight-order moments.
Finally, although  \cite{bra-duf-96} claims that only second order moments need to be bounded, a careful examination of their proof
reveals that fourth order moments are actually needed.
\end{remark}

We are ready to state the following theorem, which is the main result of this paper. Its proof is devoted to Section~\ref{sec:proof_main}.
\begin{theorem}\label{th:avoid_trap}
 Let Assumptions~\ref{hyp:model}--\ref{hyp:noise_exit} hold. Then $\bbP(x_n \rightarrow x^*) = 0$.
\end{theorem}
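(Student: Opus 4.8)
The plan is to reduce the problem to the smooth non-convergence result of Brandière and Duflo~\cite{bra-duf-96}, applied to the restriction $f_\cM$ near $x^*$. To do this I would work on the event $[x_n\to x^*]$ and study separately the normal and tangential components of the iterates with respect to $\cM$. First, using the angle condition and the sharpness of $f$ outside $\cM$, I would show that the normal distance $r_n:=\|x_n-P_\cM(x_n)\|$ goes to zero \emph{fast}. Concretely, writing $u_n:=x_n-P_\cM(x_n)$, one expands $\|x_{n+1}-P_\cM(x_{n+1})\|^2$; the dominant term is $-2\gamma_n\langle v_n,u_n\rangle\le -2\beta\gamma_n r_n$ by the angle condition, while the projection step $P_\cM(x_{n+1})-P_\cM(x_n)$ contributes only $O(\gamma_n r_n + \gamma_n\|\eta_{n+1}\|\cdot r_n)$-type corrections (using that $P_\cM$ is $C^{p-1}$ with Jacobian $P_{\cT\cM}$, so its increments along tangential directions are second order in the displacement). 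A supermartingale / Robbins–Siegmund argument, together with the fourth-moment bound on $\eta_n$, then yields $r_n \to 0$ and, more precisely, a polynomial-in-$\gamma_n$ rate such as $r_n = O(\gamma_n)$ almost surely, or at least $\sum_n \gamma_n^{-1} r_n^2 <\infty$ up to logarithmic factors — enough that the residual terms below are summable after multiplication by $\gamma_n$.

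Second, I would analyze the dynamics of the \emph{projected} iterates $y_n := P_\cM(x_n)$. A Taylor expansion of $P_\cM$ around $x_n$ gives
\begin{equation*}
 y_{n+1} = y_n - \gamma_n P_{\cT_{y_n}\cM}(v_n) + \gamma_n P_{\cT_{y_n}\cM}(\eta_{n+1}) + \gamma_n \theta_n + \xi_n,
\end{equation*}
where $\xi_n$ collects the second-order remainder of $P_\cM$ (of size $O(\|x_{n+1}-x_n\|^2) = O(\gamma_n^2(1+\|\eta_{n+1}\|^2))$, hence summable after conditioning), and $\theta_n$ a lower-order drift. Now the key point: by the Verdier condition (Definition~\ref{def:ver_ang}\ref{verd}, equivalently Equation~\eqref{eq:verd_subgrad} of Theorem~\ref{th:f_ver_strat}) applied with $y=P_\cM(x_n)$ and $x=x_n$, one has $\|P_{\cT_{y_n}\cM}(v_n) - \nabla_\cM f(y_n)\| \le C\|x_n - y_n\| = C r_n$, which is exactly the quantity controlled in the first step. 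Therefore
\begin{equation*}
 y_{n+1} = y_n - \gamma_n \nabla_\cM f(y_n) + \gamma_n P_{\cT_{y_n}\cM}\eta_{n+1} + \gamma_n e_n,
\end{equation*}
with $e_n = O(r_n) + o(1)$ an error term that is summable in the sense $\sum_n \gamma_n \|e_n\| < \infty$ a.s. (or at least small enough in the Brandière–Duflo sense). Identifying $\cM$ locally with $\bbR^k$ via a smooth chart, $\nabla_\cM f$ becomes the gradient of a $C^4$ function with $x^*$ a critical point whose Hessian $\cH_{f,\cM}(x^*)$ has a negative eigenvalue — i.e.\ $x^*$ is a \emph{trap} for this smooth recursion. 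The noise term retains a non-degenerate component on $E^-$: since $E^-\subset\cT_{x^*}\cM$ and $P_{\cT_{y_n}\cM}\to P_{\cT_{x^*}\cM}$, the conditional expectation of $\|P_{E^-}(P_{\cT_{y_n}\cM}\eta_{n+1})\|$ stays bounded away from zero by Assumption~\ref{hyp:noise_exit}(iii), while the fourth moments stay bounded by (ii).

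Third, I would invoke (a version of) the avoidance-of-traps theorem of Brandière–Duflo: for a stochastic recursion $y_{n+1}=y_n-\gamma_n\nabla g(y_n)+\gamma_n\eta_{n+1}+\gamma_n e_n$ with step sizes satisfying Assumption~\ref{hyp:model}\ref{hyp:steps}, summable perturbation $e_n$, bounded fourth conditional moments, and noise exciting the unstable subspace, the event $[y_n\to x^*]$ has probability zero. Since $[x_n\to x^*]\subset[y_n\to x^*]$ (because $r_n\to 0$ forces $y_n\to x^*$ whenever $x_n\to x^*$), we conclude $\bbP(x_n\to x^*)=0$. I expect the main obstacle to be the first step: obtaining a rate on $r_n$ sharp enough that the Verdier error $\gamma_n r_n$ is negligible on the timescale at which the saddle-escape mechanism operates — the noise is $O(\gamma_n)$ in each coordinate, the instability grows the unstable component like $\prod(1+\gamma_k\lambda)$, and one must ensure the projection/Verdier residuals do not masquerade as — or cancel — this growth. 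This requires carefully coupling the supermartingale estimate for $r_n$ with the step-size exponent $\alpha$, and verifying that the hypotheses of the smooth trap-avoidance theorem (in particular the handling of the non-i.i.d., merely $L^4$-bounded, adapted noise with a possibly vanishing but summable drift $e_n$) are genuinely met — which, as the authors note regarding \cite{bra-duf-96}, is exactly where fourth moments rather than second moments become indispensable.
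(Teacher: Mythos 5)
Your architecture coincides with the paper's: decompose $x_n$ into $P_\cM(x_n)$ plus a normal residual, use the angle condition to drive the residual to zero, use the Verdier condition to turn the projected iterates into a smooth Robbins--Monro recursion for $\nabla_\cM f$, and conclude by a Brandi\`ere--Duflo trap-avoidance argument. However, the step you yourself flag as ``the main obstacle'' is a genuine gap, and the specific quantitative claims you make to bridge it do not hold. The drift inequality coming from the angle condition has the form $\bbE_n\|z_{n+1}\|^2 \leq \|z_n\|^2 - \beta\gamma_n\|z_n\| + C\gamma_n^2$ (Lemma~\ref{lm:En-z}), and what one can extract from it via Robbins--Siegmund is only $n^{a}\|z_n\|^2\to 0$ for every $a<2\alpha-1$ (Lemma~\ref{lm:zn2_as_bound}), i.e.\ $\|z_n\| = o(n^{-(\alpha-1/2)+\varepsilon})$. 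This is strictly weaker than your claimed $r_n=O(\gamma_n)=O(n^{-\alpha})$, and your alternative claim $\sum_n \gamma_n^{-1}r_n^2<\infty$ is false at this rate. More importantly, even granting summability of $\sum_n\gamma_n\|e_n\|$ (which does follow by summing the drift inequality), that is \emph{not} the hypothesis under which the Brandi\`ere--Duflo mechanism operates: their perturbation term must satisfy $\sum_n\|\varrho_n\|^2<\infty$, and the Verdier residual $e_n=O(\|z_n\|(1+\|\eta_{n+1}\|))$ cannot be shown to be square-summable, since $\sum_n\|z_n\|^2$ may diverge when $\alpha<1$. So you cannot ``invoke (a version of) the avoidance-of-traps theorem'' off the shelf; the required version does not exist in \cite{bra-duf-96}.

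The paper's resolution is precisely to prove a \emph{strengthened} trap-avoidance result (Proposition~\ref{prop:duflo}) admitting an additional perturbation $\tilde\varrho_n$ subject only to the tail condition $\chi_n^{-1/2}\sum_{i\geq n}\gamma_i\,\bbE_n\|\tilde\varrho_{i+1}\|\to 0$ with $\chi_n=\sum_{i\geq n}\gamma_i^2$ --- the point being that the saddle-escape mechanism operates on the scale $\sqrt{\chi_n}$ of the accumulated noise, so the perturbation need only be $o(\sqrt{\chi_n})$ in this averaged sense --- and then to verify this condition from the drift inequality (Proposition~\ref{zsympa}). Your proposal contains neither the statement of this weaker sufficient condition nor the (nontrivial) adaptation of the Brandi\`ere--Duflo/Tarr\`es argument needed to accommodate it, so the reduction to the smooth case is not actually completed. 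Two further points you elide: (i) $P_\cM$ is only defined near $\cM$ and the event $[x_n\to x^*]$ is not adapted, so one must introduce stopping times and the auxiliary processes $(y_n^N)$ to make the supermartingale arguments legitimate; (ii) the original theorem of \cite{bra-duf-96} cannot be cited as a black box in any case, since (as the paper notes) its proof contains errors that must be repaired, which is done here by following Tarr\`es.
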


Combining Theorem~\ref{th:avoid_trap} with the results of Section~\ref{sec:ver_ang} we obtain that, under appropriate assumptions, SGD on a generic definable, weakly convex function converges to a local minimizer. We state this result in the following corollary.
\begin{corollary}\label{cor:conv_loc_min}
 Let Assumptions~\ref{hyp:model} and \ref{hyp:activ_strict}
 hold. Assume that $f$ has the active strict saddle property of order
 5 with the Verdier and angle conditions. Moreover, assume that, almost surely, the following holds.
 \begin{enumerate}[i)]
   \item $\bbE[\eta_{n+1} | \mcF_n] = 0$, for all $n \in \bbN$.
   \item For every $C >0$, \begin{equation*}
     \limsup \bbE[ \norm{\eta_{n+1}}^4 | \mcF_n] \1_{\norm{x_n}\leq C} < + \infty \, .
 \end{equation*}
 \item For all $w\in \bbR^d\backslash \{0\}$,
 \begin{equation*}
   \liminf\bbE[|\ps{w,\eta_{n+1}}
  |\, | \mcF_n] > 0\,.
 \end{equation*}
 \end{enumerate}
 Then, almost surely, the sequence $(x_n)$ is either unbounded, or
 converges to a local minimizer of $f$.
\end{corollary}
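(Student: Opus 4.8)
The plan is to deduce the corollary from Theorem~\ref{th:avoid_trap} by working on the almost sure event where $(x_n)$ is bounded, showing that on this event the sequence converges to a single Clarke critical point and then ruling out all active strict saddles. First I would restrict attention to the event $A = [\,(x_n) \text{ bounded}\,]$; outside $A$ there is nothing to prove. On $A$, pick a deterministic constant $C$ such that $\sup_n \norm{x_n} \leq C$ on a sub-event of positive probability (partitioning $A$ into countably many such pieces $A_C := [\sup_n\norm{x_n}\leq C]$ and using a union bound over $C\in\bbN$), and note that on $A_C$ hypothesis (ii) gives $\limsup \bbE[\norm{\eta_{n+1}}^4\mid\mcF_n] < +\infty$, i.e.\ Assumption~\ref{hyp:noise_exit}(ii) holds. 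Since $f$ is path-differentiable and $(\gamma_n)$ satisfies Assumption~\ref{hyp:model}(iv), the general subsequential convergence theory for SGD on definable (hence path-differentiable) functions — established in \cite{dav-dru-kak-lee-19}, recalled in the introduction — applies: on $A_C$, the limit set $L$ of $(x_n)$ is a nonempty, compact, connected subset of $\cZ$ on which $f$ is constant. Because $f$ has the active strict saddle property, $\cZ$ is \emph{finite}, so $L$ is a singleton $\{x^*\}$ with $x^* \in \cZ$ and $x_n \to x^*$ almost surely on $A_C$.

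Next I would split according to the type of the critical point $x^*$. If $x^*$ is a local minimizer, we are done on that part of the sample space. If $x^*$ is an active strict saddle, then by hypothesis it satisfies the Verdier and angle conditions, so Assumption~\ref{hyp:activ_strict} holds for $x^*$ (with order $5$, which is exactly what the active strict saddle property of order $5$ provides). It remains to check Assumption~\ref{hyp:noise_exit} \emph{conditionally on the event $[x_n\to x^*]$}: parts (i) and (ii) follow from hypotheses (i), (ii) and $x_n\to x^*$ (which forces $\norm{x_n}\leq C$ eventually), and part (iii) — the nondegeneracy of the noise in the negative curvature direction — follows from hypothesis (iii) applied with an orthonormal basis $w_1,\dots,w_r$ of $E^-$: since $\dim E^- \geq 1$, we have $\norm{\eta_{n+1}^-} \geq \max_i |\ps{w_i,\eta_{n+1}}|$ up to a dimensional constant, whence $\liminf \bbE[\norm{\eta_{n+1}^-}\mid\mcF_n] > 0$. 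Thus Theorem~\ref{th:avoid_trap} applies to $x^*$ and gives $\bbP(x_n\to x^*) = 0$. Taking the union over the finitely many active strict saddles (there are only finitely many points in $\cZ$), the event $[\,(x_n)\text{ bounded and } x_n\to\text{some active strict saddle}\,]$ has probability zero.

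Finally I would assemble the pieces: almost surely on $A_C$, $(x_n)$ converges to some $x^*\in\cZ$, and that $x^*$ is almost surely not an active strict saddle, hence is a local minimizer; taking the union over $C\in\bbN$ covers all of $A$, and on the complement of $A$ the sequence is unbounded by definition. This yields the dichotomy in the statement.

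The main obstacle I anticipate is the bookkeeping around \emph{conditioning on an event}: Theorem~\ref{th:avoid_trap} is stated with Assumption~\ref{hyp:noise_exit} holding "almost surely on the event $[x_n\to x^*]$", so one must be careful that the moment and nondegeneracy hypotheses of the corollary (which are stated with indicators $\1_{\norm{x_n}\leq C}$ or for all $w$) do in fact transfer to the conditional statements required by the theorem on precisely that event — in particular that the finite-$\cZ$ argument legitimately reduces the analysis to a single $x^*$ before invoking the avoidance theorem, rather than needing a more delicate simultaneous treatment. A secondary point requiring care is the reduction to a bounded sub-event $A_C$ with a \emph{deterministic} bound $C$, so that hypothesis (ii) can be read as the unconditional fourth-moment bound demanded by Assumption~\ref{hyp:noise_exit}(ii); this is where the $\1_{\norm{x_n}\leq C}$ in hypothesis (ii) is used, and one should make sure the countable partition of $A$ is handled cleanly.
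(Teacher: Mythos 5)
Your proposal is correct and follows essentially the same route as the paper's proof: restrict to the events $A_C=[\sup_n\norm{x_n}\leq C]$, invoke the subsequential convergence result of \cite{dav-dru-kak-lee-19} together with $\norm{x_{n+1}-x_n}\to 0$ and the finiteness of $\cZ$ to get convergence to a single critical point, and then apply Theorem~\ref{th:avoid_trap} to exclude each of the finitely many active strict saddles. Your explicit verification of Assumption~\ref{hyp:noise_exit}(iii) via an orthonormal basis of $E^-$ is a correct (and slightly more detailed) version of the step the paper dispatches with ``immediately implies''.
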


\begin{proof}
  We will first show that, almost surely, if $(x_n)$ is bounded, then it converges to a unique point $z \in \cZ$. Indeed, denote $A \in \mcF$ the event on which $(x_n)$ is bounded and notice that $A = \bigcup_{C \in \bbN}A_C := \bigcup_{C \in \bbN}[\sup_{n \in \bbN} \norm{x_n} \leq C]$.
   For $n \in \bbN$, define $\tilde{\eta}_{n+1} = \eta_{n+1} \1_{\norm{x_n} \leq C}$ and notice that on $A_C$, $\eta_{n+1} = \tilde{\eta}_{n+1}$. By a standard Martingale argument $\sum_{i=0}^{+\infty} \gamma_i \tilde{\eta}_{i+1}$ converges, and $\sup_{n \in \bbN} \norm{v_n} < + \infty$. Therefore, on $A_C$, we can apply \cite[Theorem 3.2]{dav-dru-kak-lee-19} and obtain that the limit points of $(x_n)$ are all lying in $\cZ$.
  Furthermore, on $A_C$, it holds that:
  \begin{equation*}
    \norm{x_{n+1} - x_n} \leq \gamma_n (L + \norm{\tilde{\eta}_{n+1}}) )\, ,
  \end{equation*}
  where $L$ is the Lipschitz constant of $f$ on $\overline{B(0, C)}$. Since the right-hand side of this inequality goes to zero, this implies that $\norm{x_{n+1} - x_n} \rightarrow 0$. The latter, along with the boundedness of $(x_n)$, implies that the set of limit points of $(x_n)$ is a connected set. From the active strict saddle property of $f$ we know that $\cZ$ is finite and the only connected set of a finite set is a unique point. This implies that, on $A_C$, $(x_n)$ converges to a unique point. Since $C \in \bbN$ was arbitrary, this remains true on $A$.

  Finally, let $x^* \in \cZ$ be an active strict saddle of $f$. Assumptions of the corollary immediately implies Assumption~\ref{hyp:noise_exit}. Hence, $\bbP(x_n \rightarrow x^*) = 0$. Thus, as soon as $(x_n)$ is bounded, it converges to a unique point, which is not an active strict saddle. By the active strict saddle property of $f$, this implies the convergence of $(x_n)$ to a local minimum of $f$.

\end{proof}

\section{Further Topics}\label{sec:fut_work}

\subsection{Conservative Fields and Empirical Risk Minimization}

  In several situations such as machine learning, one is interested in the scenario where $f$ is defined as a finite sum of the form:
  $$
  f(x) = \frac 1N \sum_{i=1}^N f_i(x)
  $$
  for some (nondifferentiable) locally Lipschitz functions $f_1,\dots,f_N$. In the machine learning community, this problem
  is referred to as \emph{empirical risk minimization}, and the algorithm referred to as SGD,
  consists in drawing an independent sequence $(I_n)$, uniformly chosen in $\{1,\dots,N\}$, and to define the iterates:
  \begin{equation}
    \tilde x_{n+1} = \tilde x_n - \gamma_n v_{n+1}\,,
\label{eq:dzaelikj}
  \end{equation}
  where $v_{n+1}$ is one element of the Clarke subdifferential of $f_{I_{n+1}}$ at point $x_n$, or, even more generally,
  $$
  v_{n+1} \in D_{I_{n+1}}(\tilde x_n)\,,
  $$
  where $D_1,\dots, D_N:\bbR^d\rightrightarrows \bbR^d$ are \emph{conservative fields} for the potentials $f_1,\dots, f_N$ respectively
  (we refer to \cite{bolte2019conservative} for a complete review of conservative fields).
  As a matter of fact,  the iterates~(\ref{eq:dzaelikj}) do not in general satisfy the inclusion~(\ref{eq:sgd-intro}).
  Hence, our paper does not, strictly speaking, encompass the algorithm~(\ref{eq:dzaelikj}).
  As far as the generalization of our results is concerned, two approaches can be used.

  A first approach follows the idea of \cite{bia-hac-sch-SVA, bolt_pauw_nips20, bolt_tam_pauw22_sampl}.
  It is based on the fact that the Clarke subdifferential (or more generally, any conservative field) coincides almost everywhere
  with the gradient (see \cite{bolte2019conservative}).
  That is, for every $i=1,\dots,N$ and for Lebesgue-almost every $x$, $D_i(x)=\{\nabla f_i(x)\}$.
  Under the assumption that the initialization $\tilde x_0$ is randomly chosen according to some probability density function on $\bbR^d$,
  and provided that every step size $\gamma_n$ lies outside a certain Lebesgue-negligible set, every random variable $\tilde x_n$ can be shown to
  admit a density w.r.t. the Lebesgue measure. This implies that $D_i(\tilde x_n)=\{\nabla f_i(\tilde x_n)\}$ almost surely, for every $i$.
  Hence $v_{n+1} = \nabla f_{I_{n+1}}(\tilde x_n)$.
  As a consequence, the iterates $(\tilde x_n)$ satisfy w.p.1:
  $$
  \tilde x_{n+1} = \tilde x_n  - \gamma_n \nabla f(\tilde x_n) + \gamma_n \eta_{n+1}\,,
  $$
  where  $\eta_{n+1} := \nabla f_{I_{n+1}}(\tilde x_n) - \nabla f(\tilde x_n)$ is a martingale increment sequence. Otherwise stated, under these assumptions,
  $\tilde x_n$ satisfy Equation~(\ref{eq:sgd-intro}) w.p.1., for every $n$. Our conclusions can therefore be extended to the algorithm~(\ref{eq:dzaelikj}),
  provided that the initialization is random, and under mild assumptions on the step sizes.
  We note that a similar conclusion can be also obtained if, instead of randomizing the initial point,
  one artificially adds a small random perturbation (e.g. Gaussian) to the update Equation~(\ref{eq:dzaelikj}), in order to ensure that every $\tilde x_n$ admits a density.

  A second approach consists in working in the framework of conservative set-valued fields, without additional assumptions such as the above randomized initialization.
  As a matter of fact, the iterates~(\ref{eq:dzaelikj}) can be written under the form
  $\tilde x_{n+1} = \tilde x_n  - \gamma_n D(\tilde x_n) + \gamma_n \eta_{n+1}$, where $D=N^{-1}\sum_{i=1}^N D_i$ is, again, a conservative field, and $(\eta_{n})$
  is a martingale increment sequence.
  Previous works such as \cite{bolte2019conservative} consider this kind of dynamics, and establish the convergence
  to the set of zeroes of $D$.
  Furthermore, a strengthened projection formula such as the one of Theorem 1, also holds when substituting $D$ with $\partial f$.
  This implies that, generically, every zero of $D$ for definable $f$  belongs to an active manifold satisfying a Verdier condition.
  Unfortunately, the main issue is that the set of zeroes of $D$ can be substantially larger than the set of Clarke critical points.
  In particular, even for weakly convex functions $f$, it cannot be established that, generically, the spurious zeroes of $D$
  (\emph{i.e.}, the ones which are not local minimizers of $f$) are active strict saddles. For instance,
  the following example is generic: $f: \bbR \rightarrow \bbR$, with $f(x) =x $, $D(0) = [0,1]$ and $D(x) =\{ 1\}$ if $x \neq 0$.
  A consequence is that Corollary~\ref{cor:conv_loc_min} cannot be immediately generalized to conservative fields.
  A characterization of the spurious zeroes of $D$, and the proof of the fact that SGD avoids such points, constitute interesting problems for future researches.

\subsection{Beyond Weak Convexity}

Active strict saddles are generic in case of
(definable) weakly convex functions. As a consequence,
our proof of nonconvergence to active saddles (Theorem~\ref{th:avoid_trap})
allows to conclude that, in case of generic weakly convex functions, SGD converges to a local minimizer (Corollary~\ref{cor:conv_loc_min}).
A natural question is to know whether the same conclusion holds
in the absence of weak convexity.

To that end, it is mandatory to characterize, aside from active strict
saddles, the generic critical points of definable but not necessarily
weakly convex functions. Such a characterization was
provided in the Ph.D. manuscript of the third author \cite{schechtman_phd}.
It is established that, generically, any Clarke critical point of a definable function
is located on an active manifold $\cM$ (satisfying a Verdier condition),
and is either a local minimum, an active strict saddle or
what the author of \cite{schechtman_phd} called a \emph{sharply repulsive critical point},
namely, a point $x^*$ which is a local minimizer of $f_\cM$, and such that
the subgradients of $f$ in the vicinity of $x^*$ are pointing toward $\cM$
(see Example~\ref{ex:non-wcvx} for an illustration of such a point).

An important but challenging problem is to establish, in the absence of weak convexity, that both active strict saddles and sharply repulsive
points are avoided.
\begin{remark}\label{rmk:false_first}
  In case of generic definable but non-weakly convex functions,
  the nonconvergence of SGD toward active strict saddles was incorrectly stated in the first arXiv version of this paper. A serious flaw in the proof was pointed out by one the anonymous reviewers, who
  noted that our proof strongly relied on the use of \cite[Theorem 4.1]{ben-hof-sor-05}, which turns out to be incorrect. A detailed account on this issue is provided in Appendix~\ref{app:ben}. The same mistake
  occurred  in \cite{schechtman_phd} which incorrectly states a nonconvergence result toward sharply repulsive points.
   Thus, the question of the generic convergence of SGD toward local minimizers remains open in the non-weakly convex setting.
\end{remark}

\section{Proof of Theorem~\ref{th:avoid_trap}}\label{sec:proof_main}

\label{sec:proofs}
From now on, we assume without restriction that $x^*=0$. Thus,
$\nabla_{\cM}f(0) = 0$, and there exists a vector $w \in \cT_{0} \cM$ such
that $\scalarp{w}{\cH_{f,\cM}(0)(w)}< 0$.
The general idea of the proof of Theorem~\ref{th:avoid_trap} is that on the
event $[x_n\to 0]$, the function $P_\cM$ is defined for all large $n$,
enabling us to write $x_n = y_n + z_n$ for these $n$, where $y_n = P_\cM(x_n)$.
The iterates $(y_n)$ can then be written under the form of a standard
smooth \emph{Robbins-Monro algorithm} for which the trap avoidance can be
established by the technique of Brandi\`ere and Duflo \cite{bra-duf-96}.
In this setting, the remainders $z_n$ will be shown to be small enough
so as not to alter fundamentally the approach of \cite{bra-duf-96}.

\textbf{Outline of the proof.}
\begin{itemize}

  \item \textit{Strengthened avoidance of traps.} First, in Section~\ref{sec:prel_traps} we provide a strengthened version of the avoidance of traps result of \cite{bra-duf-96, pem-90, tarrespieges}. In a first approximation, this result, given in Proposition~\ref{prop:duflo}, states that if an $\bbR^d$-valued sequence $(y_n)$ satisfying a recursion of the form
  \begin{equation}\label{eq:duflo_outline}
     y_{n+1} = y_{n} - \gamma_n D( y_n)  + \gamma_n \tilde{\eta}_{n+1}
    + \gamma_n \varrho_{n+1} + \gamma_n \tilde{\varrho}_{n+1}  \, ,
  \end{equation}
where $D : \bbR^d \rightarrow \bbR^d$ is a function such that $J_D(0)$ has at
least one eigenvalue with a negative real part, the noise $\tilde{\eta}_{n+1}$
is omnidirectional in some sense, and the terms $\varrho_{n+1}$ and
$\tilde\varrho_{n+1}$ are small perturbation terms (see
Proposition~\ref{prop:duflo} for the exact assumptions), then $\bbP(y_n
\rightarrow 0) = 0$. The only difference with the assumptions of
\cite{bra-duf-96, pem-90, tarrespieges} is in the presence of the term
$\tilde{\varrho}_{n+1}$.  To not interrupt the exposition we provide a complete
proof of Proposition~\ref{prop:duflo} in Section~\ref{sec:proof_duf}. In the
context of our algorithm and leaving technical details aside, denoting $y_n =
P_{\cM}(x_n)$, Equation~\eqref{eq:duflo_outline} appears from a Taylor
expansion of $P_{\cM}$, with $D(y_n) = \nabla_\cM f(y_n)$. Here,
$\tilde{\varrho}_{n+1}$ appears from the Verdier condition and its norm will be
controlled by $\dist(x_n, \cM)$.

  \item \textit{Construction of $(y_n)$.} The goal of Section~\ref{sec:app_sgd}
is to construct a sequence satisfying Equation~\eqref{eq:duflo_outline}. A
natural attempt is to define $(y_n) := (P_{\cM}(x_n))$. However, since
$P_{\cM}$ is defined only on a neighborhood of $\cM$ (say $U$), in our
construction we fix $N \in \bbN$ and for $n \geq N$, define $y_n^{N}$ as
$P_{\cM}(x_n)$ if the iterates $\{x_N, x_{N+1}, \dots, x_n\} \subset U$ and
$(y_n^{N})$ satisfies (by construction) Equation~\eqref{eq:duflo_outline}
otherwise.  In this context, $D$ becomes an appropriate extension of
$\nabla_{\cM}f$ outside $\cM$. As explained in Section~\ref{sec:app_sgd}, if
we establish $\bbP(y_n^N \rightarrow 0) = 0$, for all $N\in \bbN$, then this
will imply that $\bbP(x_n \rightarrow 0) = 0$. Thus, the goal of the next
sections will be to verify the assumptions of Proposition~\ref{prop:duflo} for
the sequence $(y_n^N)$.

  \item \textit{The sequence $(y_n^N)$ can be written as a Robbins-Monro
algorithm.} In Section~\ref{sec:app_sgd} we show that the constructed sequence
$(y_n^N)$ satisfies indeed Equation~\eqref{eq:duflo_outline}, and verifies almost all the assumptions of the general Proposition~\ref{prop:duflo}. This will be the content of Proposition~\ref{yduf}. In short, it follows from a Taylor expansion of $P_{\cM}$.
 Here, we put a special
emphasis on the term $\tilde{\varrho}_{n}^N$ (analogous to
$\tilde{\varrho}_{n}$ above), which will be shown to satisfy
$\bbE[\norm{\tilde{\varrho}_{n+1}^N}|\mcF_n] = \mathcal O (\dist(x_n,\cM))$. In the remaining sections our goal will be to control the convergence rate of $(\varrho_{n}^N)$ toward zero, which will allow us to apply Proposition~\ref{prop:duflo}.

\item \textit{Convergence rates of $(\varrho_n^N)$.} In Section~\ref{sec:drift} we establish the key drift inequality on $\dist(x_n, \cM)$, which allows us in Sections~\ref{sec:conv_man}--\ref{sec:contr_weight} establish that
\[
  \limsup \chi_n^{-1/2} \sum_{i=n}^{\infty}\gamma_i\bbE \left[\dist(x_i, \cM) |\mcF_n \right]= 0 \, ,
\]
where $\chi_n = \sum_{i=n}^{\infty} \gamma_i^2$.

\item \textit{End of proof.} Finally, in Section~\ref{sec:avoid_trap_end} we finish the proof of Theorem~\ref{th:avoid_trap} by applying Proposition~\ref{prop:duflo} to the sequence $(y_n^N)$ and precisely invoking the intermediate results allowing us to verify its assumptions.
\end{itemize}

\subsection{Preliminary:  Avoidance of Traps in the Smooth Case}\label{sec:prel_traps}

The following proposition is an avoidance of traps results in the smooth
case, which is tailored to our assumptions on the projected iterates on the
manifold $M$.  Its proof combines ideas from Brandi\`ere and Duflo
\cite{bra-duf-96} on the one hand, and from Tarr\`es \cite{tarrespieges} on the
other.  Our proposition might have some interest of its own, since it is
more general than the corresponding result in \cite{bra-duf-96}. We also note
that its proof corrects some errors found in the latter paper\footnote{We would
like to thank an anonymous reviewer for pointing out this fact.}.

Given a positive integer $d$, the statement of the proposition makes use of a
field $D : \bbR^d \rightarrow \bbR^d$.
We shall assume that $D$ is defined
on $\bbR^d$ and is $C^3$ in some neighborhood of $0$, with $D(0) = 0$.  We
shall also assume that $J_{D}(0)$, the Jacobian of $D$ at $0$, has at least one
eigenvalue with a strictly negative real part. Denote as $d^-$ the dimension of
the invariant subspace of $J_{D}(0)$ associated with these eigenvalues, and let
$d^+ = d - d^-$. We shall need the spectral factorization
\begin{equation*}
  J_{D}(0) = P \begin{pmatrix}
    J^+ & 0 \\
    0 & J^-
\end{pmatrix} P^{-1}\, ,
\end{equation*}
of $J_{D}(0)$, where $J^{-} \in \bbR^{d^- \times d^{-}}$ contains those Jordan
blocks of $J_{D}(0)$ that are associated with the eigenvalues with negative
real parts.

We now state our proposition. In all the remainder, given a filtration
$(\mcF_n)$ in a probability space $(\Omega, \mcF, \bbP)$, we shall write
$\bbE_n[\cdot] = \bbE[\cdot | \mcF_n]$ and $\bbP_n(\cdot) = \bbP()\cdot |
\mcF_n)$.

\begin{proposition}
\label{prop:duflo}
Let $(\Omega, \mcF, \bbP)$ be a probability space, $(\mcF_n)$ a filtration and
$(\gamma_n)$ a sequence of deterministic nonnegative step sizes such that
$\sum_{i=0}^{\infty} \gamma_i = +\infty$ and $\sum_{i=0}^{\infty} \gamma_i^2 < +\infty$.

Consider the $\bbR^{d}$--valued stochastic process $(y_n)$ given by
\begin{equation}\label{eq:duf}
   y_{n+1} = y_{n} - \gamma_n D( y_n)  + \gamma_n \tilde{\eta}_{n+1}
  + \gamma_n \varrho_{n+1} + \gamma_n \tilde{\varrho}_{n+1}  \, ,
\end{equation}
where $y_0$ is $\mcF_0$-measurable, the map $D$ is as above, the sequences
$(\tilde{\eta}_n), (\varrho_n)$, and $(\tilde{\varrho}_n)$ are
$(\mcF_n)$-adapted, and both $\norm{\tilde{\eta}_n}$ and
$\norm{\tilde{\varrho}_n}$ have finite fourth moments for each $n$. Write
$(\tilde\eta^+_n, \tilde{\eta}^{-}_n) = P^{-1} \tilde\eta_n$, where
$\tilde\eta^\pm_n \in \bbR^{d^\pm}$. Let $\Gamma \in \mcF$ be an arbitrary event and assume that on $[y_n \rightarrow 0] \cap \Gamma$ the following almost surely holds.
 \begin{enumerate}[{\it i)}]
 \item\label{Eeta0} $\forall n \in \bbN$, $\bbE_n\tilde{\eta}_{n+1} = 0$.
 \item\label{eta-4mom}
 $\limsup\bbE_n\norm{\tilde{\eta}_{n+1}}^4 < + \infty$.
\item \label{eq:unstable_exit}
$\liminf \bbE_n\norm{\tilde{\eta}^{-}_{n+1}} > 0 \, .$
\item\label{sum-rho} $\sum_{i=0}^{\infty} \norm{\varrho_{i+1}}^2 < + \infty \, .$
\item\label{rho-4mom}
 $\limsup \bbE_n \norm{\tilde{\varrho}_{n+1}}^4  < + \infty$.
\item\label{rho-2mom} $\lim \bbE_n \norm{\tilde{\varrho}_{n+1}}^2 = 0$.

\item\label{hyp:dufl_sumtilde}
  $\displaystyle{\limsup \chi_n^{-1/2}
  \sum_{i=n}^{\infty} \gamma_i \bbE_n \norm{\tilde{\varrho}_{i+1}}
      = 0}$, where $\chi_n = \sum_{i=n}^{\infty}\gamma_i^2$.
\end{enumerate}
Then, $\bbP(\Gamma \cap  [y_n \rightarrow 0]) = 0$.
\end{proposition}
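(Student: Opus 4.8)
The strategy is to adapt the trap-avoidance argument of Brandière--Duflo, with the refinements of Tarrès, to accommodate the extra perturbation $\tilde\varrho_{n+1}$. The proof is by contradiction: assume $\bbP(\Gamma\cap[y_n\to 0])>0$; after conditioning we may work on an event of positive probability on which items \eqref{Eeta0}--\eqref{hyp:dufl_sumtilde} hold and $y_n\to 0$. The first reduction is to pass to the linearized dynamics: since $D$ is $C^3$ near $0$ with $D(0)=0$, we write $D(y_n)=J_D(0)y_n+O(\|y_n\|^2)$, and the quadratic remainder can be absorbed into the summable-square perturbation $\varrho$ (because $y_n\to 0$ forces $\sum\|y_n\|^4<\infty$ along the relevant trajectories, after a localization/stopping-time argument restricting to $\|y_n\|\le\epsilon$). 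Then, applying the change of basis $P^{-1}$, we split $y_n$ into its components $y_n^\pm$ on the stable/unstable invariant subspaces, so that $y_{n+1}^- = (I-\gamma_nJ^-)y_n^- + \gamma_n\tilde\eta_{n+1}^- + \gamma_n(\text{perturbations})^-$, where $-J^-$ has all eigenvalues with strictly positive real part — i.e.\ the $y^-$ recursion is \emph{expansive} in conditional mean.

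The core of the argument is to exhibit a functional of the unstable component that cannot stay small. Following Brandière--Duflo/Tarrès, one studies the normalized quantity built from $y_n^-$ and the ``predicted'' noise energy $\chi_n=\sum_{i\ge n}\gamma_i^2$; roughly, one shows that the process $u_n := \chi_n^{-1/2} M_n$, where $M_n=\sum_{i<n}\gamma_i(\prod J^-\text{-propagator})\tilde\eta_{i+1}^-$ is the noise-driven part, has a nondegenerate lower bound on its conditional fluctuations thanks to item \eqref{eq:unstable_exit} (the noise genuinely excites $E^-$), while the deterministic expansive drift can only amplify $\|y_n^-\|$. The decisive new ingredient is controlling the contribution of $\tilde\varrho$ to this normalized process: its cumulative effect is $\chi_n^{-1/2}\sum_{i\ge n}\gamma_i(\text{propagator})\tilde\varrho_{i+1}$, and item \eqref{hyp:dufl_sumtilde} is \emph{exactly} the hypothesis that this is $o(1)$ in conditional expectation (the propagator being bounded on the unstable block over the finite horizon up to the escape time, after another stopping-time truncation). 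Items \eqref{rho-4mom}--\eqref{rho-2mom} give the fourth-moment control needed to run the martingale/Burkholder estimates and to show the $\tilde\varrho$-martingale part is also negligible at the $\chi_n^{1/2}$ scale. Combining: on $[y_n\to 0]$ the normalized unstable component would have to converge to $0$, but the noise lower bound \eqref{eq:unstable_exit} forces a strictly positive liminf of its conditional variance at that scale, a contradiction — hence $\bbP(\Gamma\cap[y_n\to 0])=0$.

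Concretely I would organize it as: (1) a stopping-time localization reducing to $\|y_n\|\le\epsilon$ and to $D$ replaced by its linear part plus a square-summable error, folded into $\varrho$; (2) diagonalization via $P^{-1}$ and isolation of the unstable block, defining the relevant adapted-propagator martingale $M_n$; (3) moment estimates showing $\chi_n^{-1/2}$ times the $\varrho$- and $\tilde\varrho$-contributions (both the ``drift'' parts and the martingale parts) tend to $0$ in conditional mean — here items \eqref{sum-rho} and \eqref{hyp:dufl_sumtilde} are used; (4) the Tarrès-type argument that $\chi_n^{-1/2}\|y_n^-\|$ has a strictly positive conditional liminf of its increments using \eqref{eq:unstable_exit}, contradicting convergence to $0$. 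The main obstacle — and the reason the hypotheses are tailored as they are — is step (3): the perturbation $\tilde\varrho_{n+1}$ is \emph{not} square-summable (unlike $\varrho$), only $o(1)$ in second conditional moment, so its cumulative influence on the unstable component is borderline and can only be shown negligible by carefully matching the decay rate of $\bbE_n\|\tilde\varrho_{i+1}\|$ against the weights $\gamma_i$ relative to $\chi_n^{1/2}$ — precisely what \eqref{hyp:dufl_sumtilde} encodes. A secondary technical nuisance is making the propagator bounds and all conditional-expectation manipulations rigorous on the random time interval before the (possibly infinite) first exit from the small ball, which is handled by standard stopping-time surgery as in \cite{bra-duf-96, tarrespieges}.
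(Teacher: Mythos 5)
There is a genuine gap at step (1) of your plan, and it is the step on which the whole argument hinges. You propose to linearize $D$ at the origin and to fold the quadratic remainder $O(\|y_n\|^2)$ into the square-summable perturbation $\varrho$, on the grounds that ``$y_n\to 0$ forces $\sum\|y_n\|^4<\infty$.'' This implication is false: convergence to zero gives no summability whatsoever (take $\|y_n\|\sim n^{-1/8}$). More fundamentally, even after a stopping-time localization to a small ball, the recursion for the unstable block $y_n^-$ acquires an additive error containing cross terms of order $\|y_n^+\|^2$, and the stable/center component $y_n^+$ may converge to zero arbitrarily slowly, so this error is not square-summable along the trajectory and cannot be treated as a $\varrho$-type term satisfying hypothesis \emph{iv)}. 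Purely linear block-diagonalization therefore does not produce a recursion of the required form, and the contradiction argument in your step (4) never gets off the ground.

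The missing idea — which is exactly how the paper (following Pemantle and Brandi\`ere--Duflo) proceeds — is the center-stable \emph{manifold} theorem for the full nonlinear ODE $\dot{\sy}=-D(\sy)$, not merely its linear invariant subspaces. One obtains a $C^2$ map $G:\bbR^{d^+}\to\bbR^{d^-}$ with $G(0)=0$, $J_G(0)=0$, whose graph is locally invariant, and one changes variables to $w_n^-=y_n^--G(y_n^+)$, the transverse coordinate to that manifold. The invariance of the graph forces the drift of $w^-$ to vanish identically when $w^-=0$ (Lemmas~\ref{lm:cenm_JG_D} and~\ref{lm:duflo_construction_F}), so it can be written as $(-J^-+\Delta(w_n))\,w_n^-$ with $\|\Delta(w_n)\|$ small — a genuinely expansive linear-in-$w^-$ drift with \emph{no} additive nonlinear remainder to absorb. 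From there your steps (2)--(4) are essentially the paper's Sections~\ref{sec:Un}--\ref{sec:tarres}: one sets $U_n=\|w_n^-\|_Q$ for a Lyapunov matrix $Q$ and runs the Tarr\`es two-lemma scheme (reach the threshold $\sqrt{L\chi_n}$ with probability bounded below, then persist above $\sqrt{L\chi_n}/2$ with probability at least $1/2$, conclude by L\'evy's zero–one law), with hypothesis \emph{vii)} controlling the cumulative effect of $\tilde\varrho$ exactly as you describe. Your reading of the role of hypotheses \emph{iii)}, \emph{v)}--\emph{vii)} is correct; it is the reduction to an expansive recursion that your plan does not achieve.
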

This proposition is similar to \cite[Theorem 1]{bra-duf-96}, except for the
presence of the sequence $(\tilde{\varrho}_{n})$. To not interrupt the exposition we present its proof in Section~\ref{sec:proof_duf}.

\subsection{Application to Algorithm~\eqref{eq:sgd}}\label{sec:app_sgd}

To apply the results of the preceding section we need, first, to find a
candidate for $D$, this is the purpose of the next lemma. Its proof readily
follows from results of Section~\ref{sec-prel}.

\begin{lemma}\label{lm:fopim}
  Let Assumption~\ref{hyp:activ_strict} hold and let $r>0$ be such that $P_{\cM} : B(0,r) \rightarrow \cM$ is well defined and is $C^4$ and that there is a $C^5$ function $F: B(0,r) \rightarrow \bbR$ that agrees with $f$ on $\cM \cap B(0,r)$.
  Then, the function $F \circ P_{\cM}$ is $C^4$ on $B(0,r)$ and for $y \in \cM \cap B(0,r)$, we have:
  \begin{equation*}
    \nabla (F \circ P_{\cM})(y) = \nabla_{\cM}f(y) \, .
  \end{equation*}
  Moreover, for $w \in \cT_{\cM}0$:
  \begin{equation*}
    \scalarp{w}{\cH_{f, \cM}(0)(w)} =\scalarp{w}{\cH_{F \circ P_{\cM}}(0)w}\, ,
  \end{equation*}
  where $\cH_{F \circ P_{\cM}}(0)$ is the usual Euclidean Hessian of $F \circ P_{\cM}$ at $0$.
\end{lemma}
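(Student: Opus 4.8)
The plan is to assemble the statement from the regularity results of Section~\ref{sec-prel} together with elementary calculus on manifolds. First I would invoke Lemma~\ref{lm:proj}: since $\cM$ is $C^5$ by Assumption~\ref{hyp:activ_strict}, the projection $P_{\cM}$ is $C^4$ on a neighborhood $B(0,r)$ of $0=x^*$, and $J_{P_{\cM}}(y) = P_{\cT_y\cM}$ for $y\in\cM$. Composing the $C^4$ map $P_{\cM}$ with the $C^5$ (hence $C^4$) representative $F$ shows $F\circ P_{\cM}$ is $C^4$ on $B(0,r)$. For the gradient identity at a point $y\in\cM\cap B(0,r)$, the chain rule gives $\nabla(F\circ P_{\cM})(y) = J_{P_{\cM}}(y)^{\top}\nabla F(P_{\cM}(y)) = P_{\cT_y\cM}\nabla F(y)$, using $P_{\cM}(y)=y$ and the self-adjointness of the orthogonal projection. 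By the definition of the Riemannian gradient recalled in Section~\ref{sec:subm}, $P_{\cT_y\cM}\nabla F(y) = \nabla_{\cM}f(y)$, which is exactly the claimed formula.

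The second identity, comparing the Euclidean Hessian of $F\circ P_{\cM}$ at $0$ with the Riemannian Hessian of $f$ on $\cM$ at $0$, is the part requiring care. Write $G := \nabla(F\circ P_{\cM})$, which is $C^3$ on $B(0,r)$ and, by the first part of the lemma, agrees with $y\mapsto \nabla_{\cM}f(y)$ on $\cM\cap B(0,r)$; in particular $G$ is a smooth representative (in the ambient sense) of the Riemannian gradient field $\nabla_{\cM}f$ near $0$. The Euclidean Hessian of $F\circ P_{\cM}$ at $0$ is $\cH_{F\circ P_{\cM}}(0) = J_G(0)$. By the definition of the covariant Hessian given in Section~\ref{sec:subm}, for $w\in\cT_0\cM$ we have $\cH_{f,\cM}(0)(w) = P_{\cT_0\cM}J_G(0)w$, since $G$ agrees with $\nabla_{\cM}f$ on $\cM$. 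Therefore
\begin{equation*}
  \scalarp{w}{\cH_{f,\cM}(0)(w)} = \scalarp{w}{P_{\cT_0\cM}J_G(0)w} = \scalarp{P_{\cT_0\cM}w}{J_G(0)w} = \scalarp{w}{J_G(0)w} = \scalarp{w}{\cH_{F\circ P_{\cM}}(0)w}\,,
\end{equation*}
where the middle steps use that $P_{\cT_0\cM}$ is self-adjoint and that $w\in\cT_0\cM$ so $P_{\cT_0\cM}w = w$.

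The main obstacle is purely bookkeeping: making sure the various ``smooth representative'' objects are consistent, i.e. that $G=\nabla(F\circ P_{\cM})$ genuinely qualifies as the function denoted $G$ in the definition of $\cH_{f,\cM}$ (it must be a $C^{p-1}$ ambient function agreeing with $\nabla_{\cM}f$ on $\cM$), and that the well-definedness statements quoted from Section~\ref{sec:subm} are invoked with the correct order of differentiability — this is why the hypotheses demand $P_{\cM}$ be $C^4$ and $F$ be $C^5$, so that $G$ is $C^3$ and $J_G$ is at least continuous, legitimizing the Hessian computation. No genuine analytic difficulty arises beyond the chain rule and the symmetry of orthogonal projections.
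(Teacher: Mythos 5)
Your argument is correct and is precisely the one the paper intends: the paper gives no written proof of Lemma~\ref{lm:fopim}, stating only that it ``readily follows from results of Section~\ref{sec-prel}'', and your chain-rule computation combined with Lemma~\ref{lm:proj} and the representative-independence of $\nabla_\cM f$ and $\cH_{f,\cM}$ is exactly that omitted argument. The key observation --- that $G=\nabla(F\circ P_\cM)$ is a legitimate ambient $C^3$ representative of $\nabla_\cM f$, so the covariant Hessian may be computed from it, and that $P_{\cT_0\cM}$ can be dropped inside the quadratic form because $w\in\cT_0\cM$ --- is handled correctly.
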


Let $r_1>0$ be chosen in a way that the conditions of
Definition~\ref{def:ver_ang} and Lemma~\ref{lm:fopim} are satisfied on $\overline{B(0,r_1)}$. In the remainder, we fix an $r >0$ such that $0<r < r_1$.
The value of $r$, while always satisfying this requirement, will be adjusted
in the course of the proof.

First, by Tietze's extension theorem the function $\nabla (F \circ P_{\cM}) : B(0,r)
\rightarrow \bbR^d$ can be extended to a bounded continuous function $D :
\bbR^d \rightarrow \bbR^d$ that we shall use in the remainder of the paper.

Second, to reduce technical issues, we notice that as in \cite[Section I.2]{bra-duf-96} (see also Appendix~\ref{app:coupl}) to prove Theorem~\ref{th:avoid_trap}
we can actually replace Assumption~\ref{hyp:noise_exit} by the following, more
easy to handle, assumption.
\begin{assumption}\label{hyp:noise_exit_strong}
  Almost surely, the sequence $(\eta_n)$ is such that $\bbE_n[\eta_{n+1}] = 0$ and there is $A, B >0$ such that for all $n \in \bbN$, we have:
  \begin{equation*}
    \bbE_n\norm{\eta_{n+1}}^4 \leq B \, , \quad   \bbE_n\norm{\eta_{n+1}^{-}} \geq A \, .
  \end{equation*}
\end{assumption}

Given an integer $N \geq 0$, we define the probability event
\[
\cA_N = \left[ \forall n \geq N, \ \norm{x_n} \leq r  \right] .
\]
Note that the sequence of events $(\cA_N)$ is increasing for the inclusion.
Furthermore, it holds that
\[
\left[ x_n \to 0 \right] \subset \bigcup_{N=0}^\infty \cA_N =
 \lim_{N\to\infty} \cA_N.
\]
Thus,
\[
\bbP \left[ x_n \to 0 \right] =
\bbP \left[ \left[ x_n \to 0 \right] \cap \lim \cA_N \right] =
\lim_{N\to\infty} \bbP \left[ \left[ x_n \to 0 \right] \cap \cA_N \right].
\]
Consequently, given an arbitrary $\delta > 0$, there is an integer
$N(\delta) \geq 0$ such that
\begin{equation}
\label{cvg-delta}
\bbP \left[ \left[ x_n \to 0 \right] \cap \cA_{N(\delta)} \right] \geq
\bbP \left[ x_n \to 0 \right] - \delta .
\end{equation}
For an integer $N \geq 0$, define the stopping time
\[
\tau_N = \inf \{ n \geq N, \ \norm{x_n} > r \} ,
\]
with $\inf\emptyset = \infty$,
and recall from the definition of $r$ that for $N\leq n < \tau_N$, the
projection $P_\cM(x_n)$ is well-defined. Define recursively the process
$(y^N_n)_{n\geq N-1}$ as follows: $y^N_{N-1} = 0$,
\[
y^N_n = \begin{cases}
P_\cM(x_n) \ \text{if} \ N \leq n < \tau_N, \\
y^N_{n-1} - \gamma_{n-1} D(y^N_{n-1})
   + \gamma_{n-1} J_{P_\cM}(y^N_{n-1}) \eta_{n}
   \ \text{if} \ n = \tau_N, \\
 y^N_{n-1} - \gamma_{n-1} D(y^N_{n-1}) + \gamma_{n-1} \eta_{n},
 \ \text{ otherwise},
\end{cases}
\]
and let
\[
z^N_n = (x_n - y^N_n) \1_{n < \tau_N} \quad \text{for} \ n \geq N .
\]
Observe that $y^N_n$ and $z^N_n$ are both $\mcF_n$--measurable for all
$n\geq N$. To establish Theorem~\ref{th:avoid_trap}, we shall show that for
each $N \geq 0$,
\begin{equation}
\label{cpl}
\bbP\left[ y^N_n \xrightarrow[n\to\infty]{} 0 \right] = 0.
\end{equation}
Indeed, on the event $\cA_{N(\delta)}$, it holds that
$y^{N(\delta)}_n = P_\cM(x_n)$ for $n\geq N(\delta)$, thus,
\[
\left[ \left[ x_n \to 0 \right] \cap \cA_{N(\delta)} \right]
 \subset
\left[ \left[ y^{N(\delta)}_n \to 0 \right] \cap \cA_{N(\delta)} \right] .
\]
Consequently, with the convergence~\eqref{cpl} at hand, we get from
Inequality~\eqref{cvg-delta} that $\bbP() x_n \to 0) \leq \delta$. Since $\delta$ is
arbitrary, we obtain that $\bbP( x_n \to 0) = 0$.

In the remainder of this section, $N\geq 0$ is a fixed integer.

\subsubsection{Projected Iterates as a Robbins-Monro Algorithm}\label{sec:proj_iter}

The next
proposition shows that the sequence $(y_n^N)_{n\geq N}$ satisfies a recursion
of the form~\eqref{eq:duf}, satisfying almost all the assumptions of Proposition~\ref{prop:duflo}.

\begin{proposition}
\label{yduf}
Let Assumptions~\ref{hyp:model}--\ref{hyp:activ_strict} and
\ref{hyp:noise_exit_strong} hold. Then, the sequence $(y^N_n)_{n\geq N}$
satisfies the recursion:
\[
  y^N_{n+1} = y^N_n - \gamma_n D(y^N_n) + \gamma_{n}\tilde{\eta}^N_{n+1}
  + \gamma_n \varrho^N_{n+1} + \gamma_n \tilde\varrho^N_{n+1}\, ,
\]
where the random sequences $(\tilde{\eta}^N_{n})_{n\geq N}$,
$(\varrho^N_n)_{n\geq N}$, and $(\tilde\varrho^N_n)_{n\geq N}$ are adapted to
$(\mcF_n)$. Moreover, there is $C >0$ such that for all $n \geq N$,
\begin{enumerate}[i)]
  \item\label{bnd-rho}
  $\norm{\varrho^N_{n+1}} \leq C \gamma_n
     ( 1 + \norm{\eta_{n+1}}^2 ) \1_{\tau_N > n+1}$.
  \item\label{bnd-zeta}
   $\norm{\tilde\varrho^N_{n+1}} \leq C \norm{z^N_n} (1+ \norm{\eta_{n+1}})$.
  \item \label{teta-noise}
   $\bbE_n \tilde{\eta}^N_{n+1} = 0$, and
   $\bbE_n\norm{\tilde{\eta}^N_{n+1}}^4 < C$.
\setcounter{compteur-enumeration}{\value{enumi}}
\end{enumerate}
We furthermore have:
\begin{enumerate}[i)]
\setcounter{enumi}{\value{compteur-enumeration}}
\item\label{E-=L-}
The subspace $E^-$ defined before Assumption~\ref{hyp:noise_exit} is included in $T_{0} \cM$ and $E^-_D$, the eigenspace of the matrix $J_D(0)$ corresponding to its
negative eigenvalues.
\item\label{eta-omni}
 On the event $[y^N_n \rightarrow 0]$, it holds that
 \begin{equation*}
   \liminf_{n} \bbE_n\norm{P_{E^-_D} \tilde \eta^{N}_{n+1}} \geq \liminf_n \bbE_n \norm{P_{E^-} \tilde\eta^N_{n+1}} > 0\, .
 \end{equation*}
\end{enumerate}
\end{proposition}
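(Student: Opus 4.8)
The recursion is produced by a first-order Taylor expansion of $P_{\cM}$ at $x_n$, after which the drift is identified with the help of the Verdier condition and Lemma~\ref{lm:fopim}; the only genuine subtlety is the bookkeeping at the random time $\tau_N$. Fix $r>0$ small enough that $B(0,r)$ lies in the neighborhood $U$ of Definition~\ref{def:ver_ang} and inside the ball of Lemma~\ref{lm:fopim}, so that on $B(0,r)$ one has $D=\nabla(F\circ P_{\cM})$, hence $D(y)=\nabla_{\cM}f(y)$ for $y\in\cM\cap B(0,r)$, while $P_{\cM}$ is $C^{4}$ and $J_{P_{\cM}}$ is bounded and Lipschitz on $\overline{B(0,r)}$ with $J_{P_{\cM}}(y)=P_{\cT_y\cM}$ on $\cM$. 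Let $L$ be the Lipschitz constant of $f$ on $\overline{B(0,r)}$, so $\norm{v_n}\le L$ on $\{\tau_N>n\}$. The key observation is that $\{\tau_N>n\}$ is $\mcF_n$-measurable whereas $\{\tau_N>n+1\}$ and $\{\tau_N=n+1\}$ are only $\mcF_{n+1}$-measurable. Accordingly I set $\tilde\eta^N_{n+1}:=J_{P_{\cM}}(x_n)\eta_{n+1}$ on $\{\tau_N>n\}$ and $\tilde\eta^N_{n+1}:=\eta_{n+1}$ on $\{\tau_N\le n\}$; both events being in $\mcF_n$, Assumption~\ref{hyp:noise_exit_strong} gives $\bbE_n\tilde\eta^N_{n+1}=0$ and $\bbE_n\norm{\tilde\eta^N_{n+1}}^4\le C\,\bbE_n\norm{\eta_{n+1}}^4\le CB$, which is item~\ref{teta-noise}. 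The sequences $\varrho^N$ and $\tilde\varrho^N$ are then \emph{defined} so that the stated recursion holds identically, i.e. $\gamma_n(\varrho^N_{n+1}+\tilde\varrho^N_{n+1})=y^N_{n+1}-y^N_n+\gamma_nD(y^N_n)-\gamma_n\tilde\eta^N_{n+1}$, with $\tilde\varrho^N$ absorbing the contributions proportional to $z^N_n$ and $\varrho^N$ the quadratic remainder.

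\textbf{The bounds in items~\ref{bnd-rho} and \ref{bnd-zeta}.} On $\{\tau_N\le n\}$ the right-hand side above vanishes by construction (and $z^N_n=0$), so $\varrho^N_{n+1}=\tilde\varrho^N_{n+1}=0$. On $\{\tau_N=n+1\}$ one has $y^N_n=P_{\cM}(x_n)$ and, by the definition of the process, $y^N_{n+1}-y^N_n+\gamma_nD(y^N_n)=\gamma_nJ_{P_{\cM}}(y^N_n)\eta_{n+1}$, so the right-hand side equals $\gamma_n\big(J_{P_{\cM}}(y^N_n)-J_{P_{\cM}}(x_n)\big)\eta_{n+1}$, which I put entirely into $\tilde\varrho^N_{n+1}$; Lipschitz continuity of $J_{P_{\cM}}$ bounds it by $C\norm{y^N_n-x_n}\norm{\eta_{n+1}}=C\norm{z^N_n}\norm{\eta_{n+1}}$, and $\varrho^N_{n+1}=0$ (consistent with the indicator $\1_{\tau_N>n+1}$). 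The main case is $\{\tau_N>n+1\}$, where $y^N_n=P_{\cM}(x_n)$, $y^N_{n+1}=P_{\cM}(x_{n+1})$, and the segment $[x_n,x_{n+1}]$ lies in $\overline{B(0,r)}$ by convexity. Writing $h_n=x_{n+1}-x_n=-\gamma_nv_n+\gamma_n\eta_{n+1}$, the Lipschitz bound on $J_{P_{\cM}}$ (Taylor's formula with integral remainder) gives $y^N_{n+1}=y^N_n+J_{P_{\cM}}(x_n)h_n+Q_n$ with $\norm{Q_n}\le C\norm{h_n}^2\le C\gamma_n^2(1+\norm{\eta_{n+1}}^2)$. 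Substituting, and using $D(y^N_n)=\nabla_{\cM}f(y^N_n)$ together with $\tilde\eta^N_{n+1}=J_{P_{\cM}}(x_n)\eta_{n+1}$, the term $\gamma_nJ_{P_{\cM}}(x_n)\eta_{n+1}$ cancels and one is left with
\[
\gamma_n\big(\varrho^N_{n+1}+\tilde\varrho^N_{n+1}\big)=\gamma_n\big(\nabla_{\cM}f(y^N_n)-P_{\cT_{y^N_n}\cM}v_n\big)+\gamma_n\big(P_{\cT_{y^N_n}\cM}-J_{P_{\cM}}(x_n)\big)v_n+Q_n .
\]
I set $\gamma_n\varrho^N_{n+1}:=Q_n$, which yields item~\ref{bnd-rho}, and let $\tilde\varrho^N_{n+1}$ be the two remaining brackets: the first is $O(\norm{z^N_n})$ by the Verdier condition applied with $y=y^N_n\in\cM\cap U$, $x=x_n\in U$, $v=v_n\in\partial f(x_n)$; the second is $O(\norm{z^N_n})$ because $P_{\cT_{y^N_n}\cM}=J_{P_{\cM}}(y^N_n)$, $J_{P_{\cM}}$ is Lipschitz, and $\norm{v_n}\le L$. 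This gives item~\ref{bnd-zeta}.

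\textbf{Items~\ref{E-=L-} and \ref{eta-omni}.} The inclusion $E^-\subset\cT_0\cM$ was already recorded before Assumption~\ref{hyp:noise_exit}. Since $D=\nabla(F\circ P_{\cM})$ near $0$ and $F\circ P_{\cM}$ is $C^{4}$, the matrix $J_D(0)=\nabla^2(F\circ P_{\cM})(0)$ is symmetric, so $E^-_D$ is well defined; moreover $F\circ P_{\cM}=f\circ P_{\cM}$ does not depend on the extension $F$. Using that $J_{P_{\cM}}(x)$ is symmetric and annihilates the vectors orthogonal to $\cT_{P_{\cM}(x)}\cM$ (which follows from $P_{\cM}\circ P_{\cM}=P_{\cM}$ and Lemma~\ref{lm:proj}), one has $\nabla(F\circ P_{\cM})(x)=J_{P_{\cM}}(x)\,\widehat G(P_{\cM}(x))$ for any $C^{1}$ extension $\widehat G$ of $\nabla_{\cM}f$; differentiating at $0$ and using $\nabla_{\cM}f(0)=0$, the term carrying the derivative of $J_{P_{\cM}}$ drops out, leaving $J_D(0)v=P_{\cT_0\cM}J_{\widehat G}(0)P_{\cT_0\cM}v=\cH_{f,\cM}(0)(P_{\cT_0\cM}v)$, i.e. $J_D(0)=\cH_{f,\cM}(0)\circ P_{\cT_0\cM}$ (consistent with the quadratic-form identity of Lemma~\ref{lm:fopim}). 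Hence $E^-_D=E^-$, and $\dim E^-\ge1$ by Assumption~\ref{hyp:activ_strict}, so $J_D(0)$ has the negative eigenvalue needed in Proposition~\ref{prop:duflo}; this is item~\ref{E-=L-}. For item~\ref{eta-omni}: on $\{\tau_N\le n\}$, $P_{E^-}\tilde\eta^N_{n+1}=\eta^-_{n+1}$ and $\bbE_n\norm{\eta^-_{n+1}}\ge A$; on $\{\tau_N>n\}$, $\norm{x_n}\le r$ gives $\norm{J_{P_{\cM}}(x_n)-P_{\cT_0\cM}}=\norm{J_{P_{\cM}}(x_n)-J_{P_{\cM}}(0)}\le Cr$, so from $E^-\subset\cT_0\cM$ and $\bbE_n\norm{\eta_{n+1}}\le B^{1/4}$ one gets $\bbE_n\norm{P_{E^-}\tilde\eta^N_{n+1}}\ge\bbE_n\norm{\eta^-_{n+1}}-Cr\,B^{1/4}\ge A/2$ once $r$ is taken small enough. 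Since $\{\tau_N>n\}\in\mcF_n$, these combine into $\bbE_n\norm{P_{E^-}\tilde\eta^N_{n+1}}\ge A/2$ for all $n$, whence the claimed $\liminf>0$; as $P_{E^-_D}=P_{E^-}$ the same holds for $E^-_D$.

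\textbf{Main obstacle.} The delicate part is the transition at $\{\tau_N=n+1\}$: to keep $(\tilde\eta^N_n)$ a conditional-mean-zero innovation one must build it out of $J_{P_{\cM}}(x_n)$, which is $\mcF_n$-measurable, rather than the $J_{P_{\cM}}(y^N_n)$ that appears in the definition of $y^N_{\tau_N}$, and then absorb the mismatch, of size $O(\norm{z^N_n}\norm{\eta_{n+1}})$, into $\tilde\varrho^N$ without spoiling item~\ref{bnd-zeta}. The remaining (routine but essential) point is that every Taylor/Jacobian remainder, once divided by $\gamma_n$, must land inside the prescribed bounds: the quadratic remainder $Q_n/\gamma_n$ is exactly of order $\gamma_n(1+\norm{\eta_{n+1}}^2)$, while the $z^N_n$-terms are free of $\eta_{n+1}$ except for the single power picked up from $J_{P_{\cM}}(x_n)\eta_{n+1}$ at the stopping time, which is precisely the form allowed in item~\ref{bnd-zeta}.
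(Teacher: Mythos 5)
Your proof is correct and follows essentially the same route as the paper: a Taylor expansion of $P_{\cM}$ along the step, the Verdier condition to replace $P_{\cT_{y^N_n}\cM}v_n$ by $\nabla_\cM f(y^N_n)$ up to $O(\norm{z^N_n})$, Lemma~\ref{lm:fopim} for item~\ref{E-=L-}, and Lipschitz continuity of $J_{P_\cM}$ for item~\ref{eta-omni}. The only (harmless) deviation is that you anchor the martingale increment at $J_{P_\cM}(x_n)$ rather than at $J_{P_\cM}(y^N_n)$ as the paper does, which merely shifts an $O(\norm{z^N_n}\norm{\eta_{n+1}})$ term into $\tilde\varrho^N_{n+1}$ — exactly what item~\ref{bnd-zeta} allows.
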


To prove Proposition~\ref{yduf} we will need the following, technical lemma.

\begin{lemma}
\label{lm:proj_taylor}
For $r$ small enough, there is $C > 0$ such that for $x,x' \in B(0,r)$, we
have:
   \begin{equation*}
     y' - y =  J_{P_\cM}(y) (x' - x) + R_1(x,x',y) + R_2(x, x') \, ,
   \end{equation*}
   where $y', y = P_{\cM}(x'), P_{\cM}(x)$, and where
   $\norm{R_1(x, x',y)} \leq C \norm{x'-x} \norm{x-y}$,
   and $\norm{R_2(x,x')} \leq C \norm{x' - x}^2$.
 \end{lemma}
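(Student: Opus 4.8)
The plan is to exploit the smoothness of $P_\cM$ (Lemma~\ref{lm:proj}: since $\cM$ is $C^5$, $P_\cM$ is $C^4$ on a ball $B(0,r)$, in particular $C^2$) and write a second-order Taylor expansion of $P_\cM$ around the point $y = P_\cM(x)$, carefully tracking which error terms carry a factor $\|x-y\|$ (these go into $R_1$) and which carry only $\|x'-x\|^2$ (these go into $R_2$). The key structural input is the identity $J_{P_\cM}(y) = P_{\cT_y\cM}$ from Lemma~\ref{lm:proj}, together with the fact that $P_\cM$ fixes $\cM$ pointwise, so that $J_{P_\cM}$ restricted to $\cM$ and differentiated again is controlled.

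First I would Taylor-expand $P_\cM$ at $x$: since $P_\cM$ is $C^2$ on $B(0,r)$, with Hessian bounded by some constant $C$ on the (slightly smaller) ball, we have
\[
y' = P_\cM(x') = P_\cM(x) + J_{P_\cM}(x)(x'-x) + \tilde R(x,x'),
\qquad \|\tilde R(x,x')\| \le C\|x'-x\|^2,
\]
which already has the right form for the $\|x'-x\|^2$ remainder. The issue is that the linear term involves $J_{P_\cM}(x)$, evaluated at the off-manifold point $x$, rather than $J_{P_\cM}(y)$. So the second step is to compare $J_{P_\cM}(x)$ with $J_{P_\cM}(y)$: since $J_{P_\cM}$ is $C^1$ (as $P_\cM$ is $C^2$), another first-order expansion gives
\[
\| J_{P_\cM}(x) - J_{P_\cM}(y) \| \le C \|x - y\|,
\]
because $\|x-y\| = \dist(x,\cM)$ is exactly the displacement from $x$ to its projection. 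Multiplying by $(x'-x)$, the difference $\big(J_{P_\cM}(x) - J_{P_\cM}(y)\big)(x'-x)$ is bounded by $C\|x'-x\|\,\|x-y\|$, which is precisely the form allowed for $R_1$. Collecting terms, I set $R_1(x,x',y) = \big(J_{P_\cM}(x)-J_{P_\cM}(y)\big)(x'-x)$ and $R_2(x,x') = \tilde R(x,x')$, and the claimed bounds follow. One should fix $r$ small enough at the outset so that Lemma~\ref{lm:proj} applies on $B(0,2r)$ (say), guaranteeing the required $C^2$-regularity and a uniform bound on the second derivatives of $P_\cM$ throughout $B(0,r)$, which is what makes all the constants above uniform.

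I do not expect a serious obstacle here; this is essentially bookkeeping of Taylor remainders. The only point requiring a little care is making sure the constant $C$ is genuinely uniform over $x, x' \in B(0,r)$ — this is handled by shrinking $r$ so that the closure $\overline{B(0,r)}$ lies inside the region where $P_\cM$ is $C^2$ with bounded derivatives, as provided by Lemma~\ref{lm:proj}. A secondary cosmetic point is that the lemma states $\norm{R_1} \le C\norm{x'-x}\norm{x-y}$ with the \emph{same} constant as for $R_2$; since $C$ is allowed to change from line to line (per the conventions fixed in the Notations paragraph), one simply takes the maximum of the two constants produced above.
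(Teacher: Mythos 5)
Your argument is exactly the paper's proof: a second-order Taylor expansion of $P_\cM$ at $x$ producing the $\|x'-x\|^2$ remainder $R_2$, followed by the Lipschitz continuity of $J_{P_\cM}$ to replace $J_{P_\cM}(x)$ by $J_{P_\cM}(y)$ at a cost of $C\|x'-x\|\,\|x-y\|$, which is $R_1$. The uniformity-of-constants point you raise is handled the same way in the paper (shrinking $r$ so the second derivatives of $P_\cM$ are bounded), so there is nothing to add.
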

 \begin{proof}
Since $P_{\cM}$ is $C^2$ near zero, there is $\varepsilon >0$ such that
$t \mapsto P_{\cM}( x + t(x' - x))$ is $C^2$ on
$(- \varepsilon, 1 + \varepsilon)$. Hence, by Taylor's theorem, we have
   \begin{equation*}
     y' - y = J_{P_\cM}(x)(x' -x ) + R_2(x', x) \, ,
   \end{equation*}
with $\norm{R_2(x',x)} \leq C \norm{x' - x}^2$, where $C$ is a bound on the second derivatives of $P_M$. Similarly,
since $P_{\cM}$ is $C^2$, $x \mapsto J_{P_\cM}(x)$ is Lipschitz continuous.
Therefore, for some $C >0$, $\norm{J_{P_\cM}(x)-J_{P_\cM}(y)} \leq
C \norm{x- y}$, which finishes the proof.
 \end{proof}

\begin{proof}[Proof of Proposition~\ref{yduf}]
Letting $n \geq N$, we write
\[
y^N_{n+1} = P_\cM(x_{n+1}) \1_{\tau_N > n+1} +
 \left(y^N_{n} - \gamma_{n} D(y^N_{n})\right) \1_{\tau_N \leq n+1} +
 \gamma_n\left( J_{P_\cM}({y^N_n}) \1_{\tau_N = n+1} + \1_{\tau_N\leq n}\eta_{n+1} \right)
   ,
\]
accepting the small notational abuse in the expression $P_\cM(x_{n+1})
\1_{\tau_N > n+1}$, since the projection might not be defined when the
indicator is zero. Similar abuses will also be made in the derivations below.

Using Lemma~\ref{lm:proj_taylor} and Equation~\eqref{eq:sgd}, we obtain
\begin{align*}
y^N_{n+1}
&= \left( y^N_n + J_{P_\cM}(y^N_n)( x_{n+1} - x_n) \right) \1_{\tau_N > n+1}
  + \gamma_n \varrho^N_{n+1}  + \gamma_n \zeta^N_{n+1} \\
&\phantom{=} +
 \left(y^N_{n} - \gamma_{n} D(y^N_{n}) \right) \1_{\tau_N \leq n+1}
 + \gamma_n\left( J_{P_\cM}(y^N_n) \1_{\tau_N = n+1} + \1_{\tau_N\leq n}\eta_{n+1} \right)
    \\
&= \left( y^N_n - \gamma_n J_{P_\cM}(y^N_n)v_n + \gamma_n J_{P_\cM}(y^N_n) \eta_{n+1} \right)
    \1_{\tau_N > n+1}  + \gamma_n \varrho^N_{n+1}  + \gamma_n \zeta^N_{n+1} \\
&\phantom{=} +
 \left(y^N_{n} - \gamma_{n} D(y^N_{n}) \right) \1_{\tau_N \leq n+1}
 + \gamma_n\left( J_{P_\cM}(y^N_n) \1_{\tau_N = n+1} + \1_{\tau_N\leq n}\eta_{n+1} \right)
   ,
\end{align*}
where $\varrho^N_{n+1}$ and $\zeta^N_{n+1}$ are $\mcF_{n+1}$--measurable, and
satisfy with the notations of Lemma~\ref{lm:proj_taylor}
\[
\norm{\zeta^N_{n+1}} = \gamma_n^{-1} \norm{R_1(x_n, x_{n+1}, y^N_n)}
 \1_{\tau_N > n+1}
 \leq C \gamma_n^{-1} \norm{x_{n+1} - x_n} \norm{z^N_n}
 \leq C (1 + \norm{\eta_{n+1}}) \norm{z^N_n}
\]
(in the last inequality, we used that $\norm{v_n}$ is bounded on
$[\tau_N > n]$), and
\begin{align*}
\norm{\varrho^N_{n+1}}
 &= \gamma_n^{-1} \norm{R_2(x_n, x_{n+1})} \1_{\tau_N > n+1} \\
 &\leq C \gamma_n^{-1} \norm{x_{n+1} - x_n}^2 \1_{\tau_N > n+1} \\
 &\leq C \gamma_n ( 1 + \norm{\eta_{n+1}}^2 ) \1_{\tau_N > n+1}.
\end{align*}
Using Lemma~\ref{lm:proj} in conjunction with the Verdier condition
\eqref{verd} of Definition~\ref{def:ver_ang}, we also have
\[
J_{P_\cM}(y^N_n)v_n \1_{\tau_N > n+1} = P_{\cT_{y^N_n} \cM}(v_n) \1_{\tau_N > n+1}
  = \nabla_\cM f(y^N_n) \1_{\tau_N > n+1} + \tilde\zeta^N_{n+1}
  = D(y^N_n) \1_{\tau_N > n+1} + \tilde\zeta^N_{n+1},
\]
where $\tilde\zeta^N_{n+1}$ is $\mcF_{n+1}$--measurable, and satisfies
\[
\norm{\tilde\zeta^N_{n+1}} \leq C \norm{x_n - y^N_n} \1_{\tau_N > n+1} \leq
 C \norm{z^N_n} .
\]
Gathering these expressions, we get
\[
y^N_{n+1}
=  y^N_n - \gamma_n D(y^N_{n}) + \gamma_n \tilde\eta^N_{n+1}
 + \gamma_n \varrho_{n+1} + \gamma_n \tilde\varrho_{n+1},
\]
where
\begin{align}
\tilde\eta_{n+1}^N &= \left( \1_{\tau_N > n} J_{P_\cM}(y^N_n)
 + \1_{\tau_N\leq n} \right) \eta_{n+1}, \ \text{and}
\label{teta} \\
\tilde\varrho^N_{n+1} &=
  \zeta^N_{n+1} + \tilde\zeta^N_{n} \nonumber .
\end{align}
The assertions~\ref{bnd-rho}) and~\ref{bnd-zeta}) of the statement are
obtained from what precedes.

The noise $\tilde\eta_{n}^N$ is obviously $\mcF_n$--measurable. Moreover,
$\bbE_n \tilde\eta_{n+1}^N = 0$ since $\1_{\tau_N > n} J_{P_\cM}(y^N_n) +
\1_{\tau_N\leq n}$ is $\mcF_n$--measurable. The last bound in~\ref{teta-noise})
follows from Assumption~\ref{hyp:noise_exit_strong}.

Assertion~\ref{E-=L-}) follows from Lemma~\ref{lm:fopim}.

To establish \ref{eta-omni}), we notice that, since $E^{-} \subset E_D^{-}$,
\begin{align*}
\norm{P_{E_D^-} \tilde \eta^N_{n+1}} \geq \norm{ P_{E^-}\tilde\eta^N_{n+1}}
 &= \norm{P_{E^{-}} J_{P_\cM}(y^N_n)\eta_{n+1}} \1_{\tau_N > n} +
 \norm{P_{E^{-}} \eta_{n+1}} \1_{\tau_N\leq n} \\
&\geq \norm{P_{E^{-}}\eta_{n+1}}
  - \norm{P_{E^{-}} J_{P_\cM}(y_n^N) \eta_{n+1} - P_{E^-} \eta_{n+1}}
  \1_{\tau_N > n}.
\end{align*}
On the event $[y_n^N \rightarrow 0]$, it holds that $J_{P_\cM}(y^N_n) \rightarrow
J_0$. By Lemma~\ref{lm:proj}, $J_{0}$ is the orthogonal projection on
$\cT_{0} \cM$, thus, $\lim_{y^N_n \rightarrow 0} P_{E^{-}} J_{P_\cM}(y^N_n) =
P_{E^{-}}$. Consequently, we obtain on the event $[y^N_n \rightarrow 0]$:
\begin{equation*}
  \begin{split}
    \liminf_n \bbE_n\norm{\norm{P_{E_D^-} \tilde \eta^N_{n+1}}} &\geq  \liminf_n \bbE_n \norm{ P_{E^-}\tilde\eta^N_{n+1}}\\
    &\geq
    \liminf_n \bbE_n\norm{\eta_{n+1}^{-}}
    - \limsup_n \left(\norm{P_{E^{-}} J_{P_\cM}(y^N_n) - P_{E^{-}}} \,
   \bbE_n \norm{\eta_{n+1}} \right)\\
    &\geq \liminf_n \bbE_n\norm{\eta_{n+1}^{-}}\, ,
  \end{split}
\end{equation*}
and by Assumption~\ref{hyp:noise_exit_strong}. Proposition~\ref{yduf} is proven.
\end{proof}

\subsubsection{Drift Inequality}\label{sec:drift}

Proposition~\ref{yduf} shows that we may apply Proposition~\ref{prop:duflo} to the sequence $(y_n^N)$ as soon as we are able to control the rate of convergence of $(z_n^N)$ toward zero. The latter will be the goal of this and subsequent sections.

The next lemma establishes the key inequality in our proof. Recall that
$\beta>0$ is the one given by the angle condition of
Definition~\ref{def:ver_ang}.
\begin{lemma}
\label{lm:En-z}
Under the assumptions of Proposition~\ref{zsympa}, there is $C >0$ such that
if $r >0$ is chosen small enough, then
\[
\bbE_n\norm{z^N_{n+1}}^2 \leq
   \norm{z^N_n}^2 - \gamma_n \beta\norm{z^N_n} + C \gamma_n^2 \, .
\]
\end{lemma}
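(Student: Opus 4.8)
The plan is to expand $\|z^N_{n+1}\|^2$ by writing $z^N_{n+1} = (x_{n+1} - y^N_{n+1})\1_{n+1 < \tau_N}$ and comparing it with the SGD update for $x_{n+1}$ and the Robbins–Monro-type update for $y^N_{n+1}$ derived in Proposition~\ref{yduf}. On the event $[\tau_N > n+1]$ we have $x_n, x_{n+1} \in B(0,r)$, $y^N_n = P_\cM(x_n)$, so $z^N_n = x_n - P_\cM(x_n)$ is the ``normal'' part of $x_n$. Using $x_{n+1} - x_n = -\gamma_n v_n + \gamma_n \eta_{n+1}$ together with Lemma~\ref{lm:proj_taylor} applied to $y^N_{n+1} - y^N_n$, I would write
\[
z^N_{n+1} = z^N_n - \gamma_n\bigl(v_n - J_{P_\cM}(y^N_n) v_n\bigr) + \gamma_n\bigl(\eta_{n+1} - J_{P_\cM}(y^N_n)\eta_{n+1}\bigr) - R_1 - R_2 + (\text{terms from } \1_{\tau_N=n+1}),
\]
where $R_1, R_2$ are the Taylor remainders with $\|R_1\| \le C\|x_{n+1}-x_n\|\,\|z^N_n\|$ and $\|R_2\| \le C\|x_{n+1}-x_n\|^2 \le C\gamma_n^2(1+\|\eta_{n+1}\|^2)$. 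The leading deterministic drift term is $-\gamma_n(I - J_{P_\cM}(y^N_n)) v_n = -\gamma_n(I - P_{\cT_{y^N_n}\cM}) v_n = -\gamma_n P_{N_{y^N_n}\cM} v_n$, the projection of the subgradient onto the normal space of $\cM$ at $y^N_n$.

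Next I would square and take conditional expectation $\bbE_n[\cdot]$. Expanding $\|z^N_{n+1}\|^2 = \|z^N_n\|^2 + 2\langle z^N_n, z^N_{n+1}-z^N_n\rangle + \|z^N_{n+1}-z^N_n\|^2$, the martingale-increment terms involving $\eta_{n+1}$ vanish in the cross term after conditioning (using $\bbE_n\eta_{n+1}=0$ and $\mcF_n$-measurability of $y^N_n$, $v_n$, $J_{P_\cM}(y^N_n)$, $z^N_n$), leaving the crucial first-order term $-2\gamma_n\langle z^N_n, P_{N_{y^N_n}\cM} v_n\rangle$. Here the angle condition \eqref{angle} enters: since $x_n - P_\cM(x_n) = z^N_n$ is itself a normal vector (it lies in $N_{y^N_n}\cM$ up to the $O(\|z^N_n\|^2)$ curvature correction), $\langle z^N_n, P_{N_{y^N_n}\cM} v_n\rangle = \langle z^N_n, v_n\rangle + O(\|z^N_n\|^3) = \langle v_n, x_n - P_\cM(x_n)\rangle + O(\|z^N_n\|^3) \ge \beta\|z^N_n\| - C\|z^N_n\|^3$ for $v_n \in \partial f(x_n)$. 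Then $-2\gamma_n\langle z^N_n, P_{N}v_n\rangle \le -2\gamma_n\beta\|z^N_n\| + C\gamma_n\|z^N_n\|^3$, and since $r$ is small, $\|z^N_n\|^3 \le r^2\|z^N_n\|$, so this term is $\le -\gamma_n\beta\|z^N_n\|$ after absorbing the correction (shrinking the effective constant, which costs nothing since we are free to pick $r$ small).

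The remaining contributions are all $O(\gamma_n^2)$ in conditional expectation: the second-order term $\bbE_n\|z^N_{n+1}-z^N_n\|^2$ is bounded by $C\gamma_n^2(1 + \bbE_n\|\eta_{n+1}\|^2 + \|z^N_n\|^2\bbE_n\|\eta_{n+1}\|^2 + \ldots)$; the cross terms $\langle z^N_n, R_1\rangle$ and $\langle z^N_n, R_2\rangle$ are bounded using $\|z^N_n\| \le 2r$ by $C\gamma_n\|z^N_n\|\,\|x_{n+1}-x_n\| + C\gamma_n^2\|z^N_n\|$, and after conditioning and using the fourth-moment bound on $\eta_{n+1}$ from Assumption~\ref{hyp:noise_exit_strong} (or the second-moment bound it implies), together with $\|z^N_n\| \le 2r$ and $\gamma_n \le c_2$, these collapse into $C\gamma_n^2$; the $\1_{\tau_N=n+1}$ correction terms only contribute when the indicator in $z^N_{n+1}$ is zero or on a negligible boundary event and are handled similarly. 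One minor point to check carefully is that the term involving $\langle z_n^N, R_1\rangle$ could a priori produce $\gamma_n \|z_n^N\|^2$ rather than $\gamma_n^2$; this is controlled because $\|R_1\| \le C\|x_{n+1}-x_n\|\|z_n^N\| \le C\gamma_n(1+\|\eta_{n+1}\|)\|z_n^N\|$, so $|\langle z_n^N, R_1\rangle| \le C\gamma_n(1+\|\eta_{n+1}\|)\|z_n^N\|^2 \le C\gamma_n r \|z_n^N\|^2 \cdot (1+\|\eta_{n+1}\|) $, and using $\|z_n^N\| \le 2r$ once more this is $\le C' r^2 \gamma_n \|z_n^N\|(1+\|\eta_{n+1}\|)$, whose conditional expectation is $\le C'' r^2 \gamma_n \|z_n^N\|$ — again absorbed into the $-\gamma_n\beta\|z_n^N\|$ term for $r$ small, or alternatively bounded by $\tfrac{\beta}{4}\gamma_n\|z_n^N\| + C\gamma_n\|z_n^N\|^2(1+\bbE_n\|\eta_{n+1}\|^2)^2$ with the leftover quadratic term $\le Cr^2\gamma_n\|z_n^N\|$ absorbed likewise.

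The main obstacle, and the step requiring the most care, is making the angle condition interact correctly with the projection geometry: the angle condition is stated with the \emph{true} difference $x - P_\cM(x)$, whereas the drift term naturally produces $P_{N_{y^N_n}\cM} v_n$, and one must verify that the curvature discrepancy between $z^N_n$ and the normal space is genuinely $O(\|z^N_n\|^2)$ (so that $\langle z^N_n, v_n - P_{N}v_n\rangle = \langle z^N_n, P_{\cT_{y^N_n}\cM}v_n\rangle$ is $O(\|z^N_n\|^2)$ using $z_n^N \perp \cT_{y_n^N}\cM$ up to second order, plus boundedness of $v_n$), and that after all absorptions one can still extract a clean $-\gamma_n\beta\|z^N_n\|$ with the \emph{same} $\beta$ by choosing $r$ small. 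All other estimates are routine applications of Cauchy–Schwarz, the bound $\|z^N_n\| \le 2r$, the moment bounds on $\eta_{n+1}$, and $\gamma_n$ bounded.
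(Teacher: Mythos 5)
Your proposal is correct and follows essentially the same route as the paper: expand $\|z^N_{n+1}\|^2$, use the exact orthogonality of $z^N_n = x_n - P_\cM(x_n)$ to $\cT_{y^N_n}\cM$ to kill the tangential drift and martingale contributions, apply the angle condition to the cross term $\langle v_n, x_n - P_\cM(x_n)\rangle$, bound the second-order terms by $C\gamma_n^2$ via the moment bounds, and absorb the $C\gamma_n\|z^N_n\|^2 \le Cr\gamma_n\|z^N_n\|$ leftovers into half of the $-2\gamma_n\beta\|z^N_n\|$ drift by shrinking $r$. The only superfluous worry is the ``curvature discrepancy'': since $x_n - P_\cM(x_n)$ lies exactly in the normal space at $y^N_n$ (first-order optimality of the projection, cf.\ Lemma~\ref{lm:proj}), $\langle z^N_n, P_{\cT_{y^N_n}\cM}v_n\rangle$ vanishes identically rather than being merely $O(\|z^N_n\|^2)$.
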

\begin{proof}
We shall use the notation
\[
p_n^N = x_n - y^N_n,
\]
which enables us to write $z_n^N = p_n^N \1_{n < \tau_N}$.

We start with the development
\begin{align}
\norm{z^N_{n+1}}^2 &= \norm{p^N_{n+1}}^2 \1_{n+1 < \tau_N} \nonumber \\
 &\leq \norm{p^N_{n+1}}^2 \1_{n < \tau_N} =
  \norm{p^N_{n+1} - p^N_n + p^N_n}^2 \1_{n < \tau_N} \nonumber \\
 &= \norm{z_n^N}^2 +
  2 \scalarp{x_{n+1} - x_n}{z^N_n} - 2 \scalarp{y^N_{n+1} - y^N_n}{z^N_n}
  + \norm{p^N_{n+1} - p^N_n}^2 \1_{n < \tau_N} .
\label{iter-z}
\end{align}
We now deal separately with each of the three rightmost terms in the last
expression.

We first show that
\begin{equation}
\label{lm:<y,z>}
\bbE_n| \scalarp{y^N_{n+1} - y^N_n}{z^N_n} | \leq
  C \gamma_n\norm{z_n^N}^2 + C\gamma_n^2.
\end{equation}
By Proposition~\ref{yduf},
\[
\scalarp{y^N_{n+1} - y^N_n}{z^N_n} =
\gamma_n \scalarp{- D(y^N_{n}) + \tilde\eta^N_{n+1}
 + \varrho^N_{n+1} + \tilde\varrho^N_{n+1}}{z^N_n} .
\]
We have $\scalarp{D(y^N_{n})}{z^N_n} = \scalarp{\nabla_\cM f(y^N_{n})}{z^N_n}
 = 0$ since $\nabla_\cM f(y^N_{n}) \in \cT_{y^N_n} \cM$. Furthermore,
we get from Equation~\eqref{teta} that
\[
\1_{n < \tau_N} \tilde\eta^N_{n+1}  =
\1_{n < \tau_N} J_{P_\cM}(y^N_n) \eta_{n+1}
= \1_{n < \tau_N} P_{\cT_{y^N_n} \cM}(\eta_{n+1})
\]
by Lemma~\ref{lm:proj}, thus, $\scalarp{\tilde\eta^N_{n+1}}{z^N_n} = 0$.
As a consequence,
\[
2| \scalarp{y^N_{n+1} - y^N_n}{z^N_n} | \leq
 \gamma_n ( \norm{z^N_n}^2 +
  \norm{\varrho^N_{n+1} + \tilde\varrho^N_{n+1}}^2 )
 \leq \gamma_n \norm{z^N_n}^2 +
  2 \gamma_n (\norm{\varrho^N_{n+1}}^2 + \norm{\tilde\varrho^N_{n+1}}^2).
\]
From Proposition~\ref{yduf} again, we have
\[
 \bbE_n\norm{\varrho^N_{n+1}}^2 \leq
 C \gamma_n^2 \bbE_n( 1 + \norm{\eta_{n+1}}^4 ) \1_{\tau_N > n+1} \leq
 C \gamma_n^2 \bbE_n( 1 + \norm{\eta_{n+1}}^4 )
  \leq C \gamma_n \, ,
\]
  and
\[
 \bbE_n\norm{\tilde\varrho^N_{n+1}}^2
  \leq C \norm{z^N_n}^2 (1 + \bbE_n\norm{\eta_{n+1}}^2) \leq
  C \norm{z^N_n}^2 .
\]
Inequality~\eqref{lm:<y,z>} is obtained by combining these inequalities.

We next show succinctly that
\begin{equation}
\label{lm:p-p}
 \bbE_n \norm{p^N_{n+1} - p^N_n}^2 \1_{n < \tau_N} \leq C \gamma_n^2 .
\end{equation}
Indeed,
\begin{multline*}
 \norm{p^N_{n+1} - p^N_n}^2 \1_{n < \tau_N} =
 \norm{x_{n+1} - x_n - (y^N_{n+1} - y^N_n)}^2 \1_{n < \tau_N} \\
  \leq C \gamma_n^2 \left(\norm{v_n}^2 + \norm{\eta_{n+1}}^2 +
  \norm{D(y^N_n)}^2 + \norm{\tilde{\eta}^N_{n+1}}^2 +
   \norm{\varrho^N_{n+1}}^2 + \norm{\tilde\varrho^N_{n+1}}^2\right)
  \1_{n < \tau_N},
\end{multline*}
and the result follows by standard calculations making use of the
results of Proposition~\ref{yduf}.

We finally deal with the term $\scalarp{x_{n+1} - x_n}{z^N_n}$.  Since $\bbE_n
\eta_{n+1} = 0$, we have $\bbE_n \scalarp{x_{n+1} - x_n} {z^N_n} = - \gamma_n
\scalarp{v_n}{z^N_n}$. Since the angle condition is satisfied on $B(0,r)$, and $\norm{x_n} \leq r$ when $z^N_n \neq
0$, we obtain that
\[
  \bbE_n \scalarp{x_{n+1} - x_n}{z^N_n} \leq
- \gamma_n \beta \norm{z^N_n} .
\]
Getting back to Inequality~\eqref{iter-z}, and using this result in conjunction
with the inequalities~\eqref{lm:<y,z>} and~\eqref{lm:p-p}, we obtain that
\begin{equation*}
  \begin{split}
    \bbE_n\norm{z^N_{n+1}}^2 &\leq \norm{z^N_n}^2 + C\gamma_n \norm{z^N_n}^2
      - 2 \gamma_n \beta \norm{z^N_n} + C \gamma_n^2  \\
      &\leq \norm{z_n^{N}}^2 + \gamma_n \norm{z_n^N}(C r - 2 \beta) + C \gamma_n^2 \, ,
  \end{split}
\end{equation*}
where in the last inequality we have used the fact that $\norm{x_n} \leq r$ on the event $[n < \tau_N]$. Choosing $r$ small enough to satisfy $Cr \leq \beta$, completes the proof.
\end{proof}

\subsubsection{Convergence Rates to the Manifold}\label{sec:conv_man}
Using Lemma~\ref{lm:En-z} we are now able to obtain first convergence rates for $(z_n^N)$.
We recall that the exponent $\alpha$ defined in
Assumption~\ref{hyp:model}-\ref{hyp:steps}) is in the interval $(1/2,1]$.

\begin{lemma}\label{lm:zn2_as_bound}
Under the assumptions of Proposition~\ref{zsympa}, for each
$a \in (0, 2\alpha - 1)$, it holds that $n^{a} \norm{z_n^N}^2 \to_n 0$ almost
surely.
\end{lemma}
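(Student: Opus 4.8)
The plan is to write $u_n := \norm{z_n^N}^2$ and to exploit the drift inequality of Lemma~\ref{lm:En-z}, namely $\bbE_n u_{n+1} \le u_n - \gamma_n \beta \sqrt{u_n} + C\gamma_n^2$ for $n \ge N$, together with a Robbins--Siegmund almost-sure convergence argument. Recall from Assumption~\ref{hyp:model}-\ref{hyp:steps}) that $\sum_n \gamma_n = +\infty$ while $\sum_n \gamma_n^2 < +\infty$ (the latter because $\alpha > 1/2$), and note that $\norm{z_n^N} \le r$ for every $n$ (since $x^* = 0 \in \cM$, one has $\norm{z_n^N} = \dist(x_n,\cM)\1_{n<\tau_N} \le \norm{x_n}\1_{n<\tau_N} \le r$), so that all the quantities below are integrable.

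As a first step I would prove the unweighted statement $u_n \to 0$ almost surely. The displayed inequality is of Robbins--Siegmund type with summable perturbation $C\gamma_n^2$; it therefore yields that $(u_n)$ converges almost surely and that $\sum_n \gamma_n\sqrt{u_n} < +\infty$ almost surely. Since $\sum_n \gamma_n = +\infty$, the latter forces $\liminf_n u_n = 0$, and combined with the convergence of $(u_n)$ this gives $u_n \to 0$ a.s.

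For the polynomial rate I would introduce the weighted process $V_n := n^a u_n$. Using $(n+1)^a/n^a \le 1 + 2a/n$ for $n$ larger than some $n_1$, together with $c_1 n^{-\alpha}\le \gamma_n \le c_2 n^{-\alpha}$, the inequality of Lemma~\ref{lm:En-z} gives, for $n \ge \max(N,n_1)$,
\[
\bbE_n V_{n+1} \le V_n + \frac{2a}{n}V_n - \beta c_1 n^{a-\alpha}\sqrt{u_n} + C' n^{a-2\alpha},
\]
where the perturbation $C' n^{a-2\alpha}$ is summable precisely because $a < 2\alpha-1$. The term $\frac{2a}{n}V_n$ is the genuine obstacle: it must be absorbed into the favorable term $\beta c_1 n^{a-\alpha}\sqrt{u_n}$. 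Since $V_n = n^a u_n \le c\, n^a\sqrt{u_n}$ whenever $\sqrt{u_n}\le c$, the ratio of these two terms is at most $\frac{2ac}{\beta c_1} n^{\alpha-1} \le \frac{2ac}{\beta c_1}$ because $\alpha\le 1$; choosing $c := \beta c_1/(4a)$ makes this ratio $\le 1/2$ on the region $\{\sqrt{u_n}\le c\}$, so that there
\[
\bbE_n V_{n+1} \le V_n - \tfrac12 \beta c_1 n^{a-\alpha}\sqrt{u_n} + C' n^{a-2\alpha}.
\]

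It remains to turn this regional inequality into a genuine supermartingale argument, and this localization is the main technical point, since by the first step the process enters the region $\{\sqrt{u_n}\le c\}$ only eventually, at a random time. I would handle it by stopping: for each integer $m \ge \max(N,n_1)$ set $\sigma_m := \inf\{n\ge m: \sqrt{u_n} > c\}$ and consider $\tilde V_n := V_{n\wedge\sigma_m}$. On $\{n<\sigma_m\}$ the last displayed inequality applies, so $(\tilde V_n)_{n\ge m}$ satisfies a Robbins--Siegmund inequality with summable perturbation $C' n^{a-2\alpha}\1_{n<\sigma_m}$; hence $\tilde V_n$ converges almost surely to a finite limit and $\sum_n n^{a-\alpha}\sqrt{u_n}\1_{n<\sigma_m} < +\infty$ almost surely. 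On the event $\{\sigma_m = \infty\}$ one has $\tilde V_n = V_n$, so $V_n \to V_\infty$ for some finite $V_\infty \ge 0$; moreover $n^{a-\alpha}\sqrt{u_n} = n^{a/2-\alpha}\sqrt{V_n}$ with $\sqrt{V_n}\to\sqrt{V_\infty}$, while $\sum_n n^{a/2-\alpha} = +\infty$ (since $a/2-\alpha > -1$, because $a>0\ge 2\alpha-2$), which forces $V_\infty = 0$. Finally, the first step gives $\bigcup_m \{\sigma_m = \infty\} = \Omega$ up to a null set, hence $V_n = n^a u_n \to 0$ almost surely, which is the claim.
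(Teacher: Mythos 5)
Your proof is correct, and its core is the same as the paper's: weight $\norm{z_n^N}^2$ by $n^a$ and run a Robbins--Siegmund argument on the drift inequality of Lemma~\ref{lm:En-z}. You do extra work in two places where the paper takes a shortcut. First, to absorb the term $C'n^{a-1}\norm{z_n^N}^2$ into $-\beta c_1 n^{a-\alpha}\norm{z_n^N}$ you prove the unweighted convergence $\norm{z_n^N}^2\to 0$ and then localize with the stopping times $\sigma_m$; the paper instead observes that $\norm{z_n^N}\le r$ holds deterministically (since $0\in\cM$ and $\norm{x_n}\le r$ on $[n<\tau_N]$) and that $r$ may be shrunk at will, so the absorption is global and no preliminary step or stopping is needed. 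Second, to identify the limit of $n^a\norm{z_n^N}^2$ as zero you exploit the a.s.\ summability of $\sum_n n^{a-\alpha}\sqrt{u_n}$ against the divergence of $\sum_n n^{a/2-\alpha}$ on $[\sigma_m=\infty]$; the paper instead notes that convergence holds for every $a'\in(0,2\alpha-1)$, so taking $a<a'$ forces the limit at exponent $a$ to vanish. Both of your workarounds are sound (the stopped-process Robbins--Siegmund application and the union $\bigcup_m[\sigma_m=\infty]$ being of full measure are handled correctly), just longer than necessary given the standing convention, stated after Lemma~\ref{lm:fopim}, that $r$ is adjustable throughout the proof.
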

\begin{proof}
Let $a \in (0, 2\alpha - 1)$ be arbitrary.
For $n \geq N$, denote $u_n = n^{a} \norm{z_n^N}^2$. From Lemma~\ref{lm:En-z}
and Assumption~\ref{hyp:model}-\ref{hyp:steps}), we obtain that
  \begin{equation*}
    \bbE_n u_{n+1} \leq \left(\frac{n+1}{n}\right)^{a} u_n - \beta c_1 \frac{(n+1)^a}{n^{\alpha}} \norm{z_n^N} + Cc_2\frac{(n+1)^{a}}{n^{2\alpha}}\, .
  \end{equation*}
  Noticing that there is $C'>0$ such that for all $n >1$, $(1+1/n)^{a} \leq 1 + C'/n$, we obtain:
  \begin{equation*}
    \begin{split}
          \bbE_n u_{n+1} &\leq u_n + C' n^{-1} u_n - c_1\beta n^{a - \alpha}\norm{z_n^N}  + 2^aC c_2 n^{a - 2\alpha} \\
          &\leq u_n + C'n^{a -1} \norm{z_n^N}^2 - c_1\beta n^{a - \alpha}\norm{z_n^N}  + 2^aC c_2 n^{a - 2\alpha}\, .
    \end{split}
  \end{equation*}
  In particular, since $\alpha < 1$, if $r>0$ is chosen small enough, then $c_1\beta n^{a - \alpha}\norm{z_n^N} \geq C'n^{a -1} \norm{z_n^N}^2$ and we obtain:
  \begin{equation*}
    \bbE_n u_{n+1} \leq u_n + \frac{2^aC c_2}{n^{2\alpha- a}} \, .
  \end{equation*}
  Since $2 \alpha - a>1$, we obtain by Robbins-Siegmund's theorem
 \cite{robbins1971convergence} that $n^a \norm{z_n^N}^2$ converges almost
surely as $n\to\infty$. Since $a$ is arbitrary in $(0, 2\alpha - 1)$, this
limit is zero.
\end{proof}

\subsubsection{Control of the Weighted Sums}\label{sec:contr_weight}

Finally, we can now control the weighted sum of the sequence $(z_n^N)$. This, as it will be clear from Section~\ref{sec:avoid_trap_end}, was the only remaining point to apply Proposition~\ref{prop:duflo} to the sequence $(y_n^N)$.
\begin{proposition}
\label{zsympa}
Let Assumptions~\ref{hyp:model}--\ref{hyp:activ_strict} and
\ref{hyp:noise_exit_strong} hold true. Then, the sequence $(z_n^N)$
satisfies
\[
\chi_n^{-1/2} \sum_{i=n}^{\infty} \gamma_i\bbE_n\norm{z_i^N}
 \xrightarrow[n\to\infty]{\text{a.s.}} 0 .
\]
\end{proposition}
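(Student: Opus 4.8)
The plan is to convert the one-step drift inequality of Lemma~\ref{lm:En-z} into a bound on the whole conditional tail sum by telescoping, and then to divide by $\chi_n^{1/2}$ and control the resulting boundary term via the almost-sure rate provided by Lemma~\ref{lm:zn2_as_bound}. Throughout, $N$ is the fixed integer of the construction and it suffices to treat $n \geq N$, since the statement concerns $n\to\infty$.

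\textbf{Step 1: telescoping the drift inequality.} For every $i \geq N$, Lemma~\ref{lm:En-z} can be rewritten as
\[
\gamma_i \beta \norm{z^N_i} \leq \norm{z^N_i}^2 - \bbE_i\norm{z^N_{i+1}}^2 + C\gamma_i^2 .
\]
Applying $\bbE_n$ to both sides for $i \geq n$ and using the tower property $\bbE_n\bbE_i = \bbE_n$ gives $\gamma_i \beta\, \bbE_n\norm{z^N_i} \leq \bbE_n\norm{z^N_i}^2 - \bbE_n\norm{z^N_{i+1}}^2 + C\gamma_i^2$. All the conditional expectations here are finite: indeed $(z^N_i)$ is uniformly bounded, because on $[i < \tau_N]$ one has $\norm{x_i}\leq r$ and $y^N_i = P_\cM(x_i) \in P_\cM(\overline{B(0,r)})$, while $z^N_i = 0$ on $[i \geq \tau_N]$. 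Summing over $i$ from $n$ to $M$ and telescoping the middle terms,
\[
\beta \sum_{i=n}^{M}\gamma_i\, \bbE_n\norm{z^N_i}
 \leq \norm{z^N_n}^2 - \bbE_n\norm{z^N_{M+1}}^2 + C\sum_{i=n}^{M}\gamma_i^2
 \leq \norm{z^N_n}^2 + C\chi_n .
\]
The left-hand side is nondecreasing in $M$ and the right-hand side is finite, so letting $M\to\infty$ yields
\[
\beta\sum_{i=n}^{\infty}\gamma_i\, \bbE_n\norm{z^N_i} \leq \norm{z^N_n}^2 + C\chi_n .
\]

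\textbf{Step 2: dividing by $\chi_n^{1/2}$.} From the previous display,
\[
\chi_n^{-1/2}\sum_{i=n}^{\infty}\gamma_i\, \bbE_n\norm{z^N_i}
 \leq \beta^{-1}\Bigl(\chi_n^{-1/2}\norm{z^N_n}^2 + C\,\chi_n^{1/2}\Bigr) .
\]
Since $\alpha > 1/2$, Assumption~\ref{hyp:model}-\ref{hyp:steps}) gives $\sum_i\gamma_i^2 < \infty$, so $\chi_n \to 0$ and the term $C\chi_n^{1/2}$ vanishes. For the first term, the same assumption gives $\gamma_i \sim i^{-\alpha}$, hence $\chi_n = \sum_{i\geq n}\gamma_i^2 \sim n^{1-2\alpha}$, i.e. $\chi_n^{-1/2} \lesssim n^{(2\alpha-1)/2}$. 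Choose any $a \in \bigl((2\alpha-1)/2,\, 2\alpha-1\bigr)$, a nonempty interval since $2\alpha-1 > 0$. By Lemma~\ref{lm:zn2_as_bound}, $n^a\norm{z^N_n}^2 \to 0$ almost surely, and therefore
\[
\chi_n^{-1/2}\norm{z^N_n}^2
 \lesssim n^{(2\alpha-1)/2}\norm{z^N_n}^2
 = n^{(2\alpha-1)/2 - a}\cdot n^a\norm{z^N_n}^2
 \xrightarrow[n\to\infty]{}\ 0 \qquad\text{a.s.},
\]
the first factor tending to zero because $(2\alpha-1)/2 - a < 0$. Combining the two terms proves the claim.

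\textbf{Main obstacle.} The argument is essentially bookkeeping; the only delicate point is matching the almost-sure rate $n^a\norm{z^N_n}^2 \to 0$ from Lemma~\ref{lm:zn2_as_bound}, which only holds for $a < 2\alpha-1$, against the exact order of decay $\chi_n^{-1/2}\asymp n^{(2\alpha-1)/2}$ of the weight. The reason the two fit together is precisely the strict inequality $(2\alpha-1)/2 < 2\alpha-1$, which is equivalent to $\alpha > 1/2$; this is where the restriction $\alpha\in(1/2,1]$ is used. One also has to check that the telescoping and the passage $M\to\infty$ are legitimate, which is guaranteed by the uniform boundedness of $(z^N_n)$ noted in Step 1.
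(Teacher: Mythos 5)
Your proof is correct and follows essentially the same route as the paper: telescoping the drift inequality of Lemma~\ref{lm:En-z} to get $\beta\sum_{i=n}^{\infty}\gamma_i\bbE_n\norm{z_i^N}\leq\norm{z_n^N}^2+C\chi_n$, then killing the boundary term via Lemma~\ref{lm:zn2_as_bound} and the exponent comparison $\chi_n^{-1/2}\sim n^{\alpha-1/2}$ with $\alpha-1/2<2\alpha-1$. Your added care about the finiteness of the conditional expectations (via the uniform boundedness of $z_n^N$) is a sound, if minor, refinement of the paper's terser argument.
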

\begin{proof}
Let $C > 0$ be the constant provided in the statement of
Proposition~\ref{lm:En-z}. For each $n \geq N$ and $k \geq n$, we obtain from
this proposition that
\[
\bbE_n \norm{z^N_{k+1}}^2 \leq \norm{z^N_{n}}^2 - \beta
 \sum_{i=n}^k \gamma_i \bbE_n \norm{z^N_{i}} + C \sum_{i=n}^k \gamma_i^2 .
\]
Taking $k$ to infinity, we get that
\[
\beta \sum_{i=n}^{\infty}\gamma_i \bbE_n\norm{z^N_{i}}
  \leq \norm{z^N_{n}}^2 + C\chi_n \, .
\]
Let $a$ be as in the statement of Lemma~\ref{lm:zn2_as_bound}. Since
$\lim n^{a} \norm{z_n^N}^2 = 0$ a.s., and furthermore, since
$\chi_n^{-1/2} \sim n^{\alpha - 1/2}$, we get that
$\chi_n^{-1/2} \norm{z_n^N}^2 \to_n 0$ a.s., and Proposition~\ref{zsympa} is
proven.
\end{proof}

\subsubsection{Theorem~\ref{th:avoid_trap} - End of Proof}\label{sec:avoid_trap_end}

We are now in the position to finish the proof of Theorem~\ref{th:avoid_trap}.
Indeed, recall from the discussion in the end of Section~\ref{sec:app_sgd} that the proof will be finished if we prove, for every $N \in \bbN$, that $\bbP(y_n^N \rightarrow 0) = 0$. To that end, we will verify that for every $N \in \bbN$, the sequence $(y_n^N)$ satisfies the assumptions of Proposition~\ref{prop:duflo}.

From Proposition~\ref{yduf}, it is obvious that the assumptions
\ref{Eeta0})--\ref{eq:unstable_exit}) in the statement of
Proposition~\ref{prop:duflo} are verified by $(\tilde{\eta}^N_{n})$.
Furthermore,
\begin{equation*}
  \sum_{i=N}^{\infty} \bbE[\norm{\varrho_{i}^N}^2] \leq C \sum_{i=N}^{\infty} \gamma_i^2 \bbE[(1+ \norm{\eta_{i}}^4)] \leq C \chi_N < +\infty \, ,
\end{equation*}
which shows that the assumption~\ref{sum-rho}) of Proposition~\ref{prop:duflo}
is satisfied by $(\varrho_{n}^N)$.
We also have from Proposition~\ref{yduf} that for all $n \geq N$,
\begin{equation*}
  \bbE_{n} \norm{\tilde{\varrho}^N_{n+1}} \leq C \norm{z_n^N} , \quad
  \bbE_{n} \norm{\tilde{\varrho}^N_{n+1}}^2 \leq C \norm{z_n^N}^2 , \quad
  \textrm{ and } \quad \bbE_{n} \norm{\tilde{\varrho}^N_{n+1}}^4 \leq C \norm{z_n^N}^4 \, .
\end{equation*}
Thus, Assumptions~\ref{rho-4mom})--\ref{rho-2mom}) of Proposition~\ref{prop:duflo} are verified by $(\tilde\varrho_{n}^N)$ by Lemma~\ref{lm:zn2_as_bound}. Finally, Assumption~\ref{hyp:dufl_sumtilde}) of Proposition~\ref{prop:duflo} is verified by Proposition~\ref{zsympa}.

As a consequence, we obtain for every $N \geq 0$ that
$\bbP(y_n^N \rightarrow 0) = 0$ and finally that $\bbP(x_n \rightarrow 0) = 0$.

\section{Proof of Proposition~\ref{prop:duflo}}\label{sec:proof_duf}

The proof of Proposition~\ref{prop:duflo} is technical and combines ideas from
\cite{bra-duf-96} and \cite{tarrespieges} (see also the older paper
\cite{pem-90}).

The main idea of \cite{bra-duf-96} and \cite{pem-90} was to observe that, due
to the fact that $J_D(0)$ has at least one eigenvalue with strictly negative
real part, the Ordinary Differential Equation (ODE) $\dot{\sy}(t) = -
D(\sy(t))$ admits a so-called center-stable invariant manifold of dimension
strictly less than $d$.  This manifold satisfies the following property: if at
some neighborhood of zero a solution of the ODE starts at a point outside of
the manifold, then it diverges from the origin.  The idea of
\cite{bra-duf-96} was then to construct a function that, in some sense,
measures the distance of a point to the invariant manifold. The assumption on
the perturbation sequence $(\tilde{\eta}_{n})$, combined with some
probabilistic estimates, then shows that the distance of the iterates $(y_n)$
to this manifold never vanishes, which implies that these iterates stay away
from zero.

From a technical point of view our proof combines techniques of
\cite{bra-duf-96} and \cite{tarrespieges}. The use of the approach of
\cite{tarrespieges} in the second part of the proof circumvents an error that
is found in the approach of \cite{bra-duf-96}.

\textbf{Outline of the proof.}

\begin{itemize}

\item \textit{Center-stable manifold.} In Section~\ref{sec:cen_man} we begin by
recalling the center-stable manifold theorem, which first version dates back to
Poincar\'e. The main result here is that, after a linear basis change, the
center-stable manifold could be represented through a function $G: \bbR^{d^+}
\rightarrow \bbR^{d^-}$, where $d^{+}$ (resp. $d^{-}$) denotes the dimension of
invariant subspaces of $J_{D}(0)$ that are associated to eigenvalues with
non-negative (respectively negative) real parts. This function $G$ is the first
step of the construction of a "distance" to the center-stable manifold.

Using the function $G$, we then construct from $(y_n)$ an $\bbR^{d^-}$-valued
sequence $(w_n^{-})$, that, due to the presence of a negative eigenvalue in
$J_{D}(0)$, ought to be pushed away from the origin. In
Lemma~\ref{lm:wn}, we show that it satisfies the recursion:
\[
w_{n+1}^{-} = w_n^{-} + \gamma_n H_n w_n^{-} + \gamma_n
(e_{n+1} + r_{n+1} + \tilde r_{n+1}  \, ,
\]
where the noise sequences $(e_{n+1})$ $(r_{n+1})$, and $(\tilde r_{n+1})$
satisfy assumptions analogous to their analogues
$(\eta_{n+1})$, $(\varrho_{n+1})$, and $(\tilde\varrho_{n+1})$,
and where the sequence of matrices $(H_n)$ converges to a
matrix that has only eigenvalues with positive real-parts. The presence of
these matrices, combined with the noise $e_{n}$, will cause the iterates
$w^-_n$ to be pushed away from zero, and so will be the case of the
$y_n$.

The content of this section closely follows the work of \cite{bra-duf-96}, the
only difference lying in the presence of the sequence $(\tilde\rho_{n+1})$ in
Proposition~\ref{prop:duflo}.

\item \textit{The repulsive sequence $(U_n)$ and its properties
(Sections~\ref{sec:Un} and \ref{sec:tarres}).} From $w_n^-$ we
construct a one-dimensional, positive random variable $(U_n)$
which is just a
well-chosen norm of $w_n^{-}$. The goal is, thus, to prove that $\bbP([U_n
\rightarrow 0]) = 0$. The construction of $U_n$ appears in \cite[Proof of
Proposition 4]{bra-duf-96}, however, from this point, our technique of proof
starts to follow the one of \cite{tarrespieges}. The properties of $(U_n)$
that will be required to show the non-convergence of this sequence are
provided by Lemmas~\ref{lm:Un_def} to~\ref{lm:Un_upp_bound}. In these lemmas,
we respectively show that
\begin{align*}
    U_{n+1} - U_n &\geq
     \gamma_n \scalarp{a_n}{e_{n+1} + r_{n+1} + \tilde r_{n+1}} \, ,
  \quad (a_n) \ \text{adapted and bounded}, \\
     \bbE_n U_{n+1}^2 - U_{n}^2 &\gtrsim
  C \gamma_n^2 - \gamma_n U_n \bbE_n (\| r_{n+1} \| + \| \tilde r_{n+1} \|), \\
     (U_{n+1} - U_n)^2 &\lesssim
    \gamma_n^2 (1 + \| e_{n+1} \|^2 + \| r_{n+1} \|^2 + \| \tilde r_{n+1} \|^2)  .
\end{align*}
These inequalities are then used to establish the following facts:
\begin{itemize}
 \item For a well-chosen $L >0$, for every $N \in \bbN$, the probability that
  for some $n \geq N$, $U_n \geq \sqrt{L \chi_n}$ is lower bounded
 (Lemma~\ref{lm:tarres_lemma1}).
    \item As soon as $U_n \geq \sqrt{L \chi_n}$, then with positive
   probability $\sup_{k \geq n} U_{k} \geq \sqrt{L \chi_n}/2$
 (Lemma~\ref{lm:tarres_lemma2}).
  \end{itemize}
Both of these lemmas are strengthened versions of \cite[Lemmas 1 and
2]{tarrespieges} (to our knowledge, first these ideas were presented in
\cite{pem-90}), where the terms $\tilde r_{n+1}$ are absent.
As in \cite{tarrespieges}, combining these two points, Lemma~\ref{lm:duf_end_proof} shows that
$\bbP([U_n \rightarrow 0]) = 0$ and completes the proof.
\end{itemize}

\subsection{Application of the Center-stable Manifold Theorem}
\label{sec:cen_man}

Consider the map $D: \bbR^d \rightarrow \bbR^d$ introduced before the statement
of Proposition~\ref{prop:duflo}, and consider the following ODE starting in a
neighborhood of zero
\[
  \dot{\sy}(t) = - D(\sy(t)) \, .
\]
Recalling the spectral factorization of the Jacobian $J_{D}(0)$ provided
before the statement of Proposition~\ref{prop:duflo} , it will be
convenient to work in the basis of the columns of $P$ by making the variable
change
\[
y \mapsto {\bfy} =
  \begin{bmatrix} y^+ \\ y^- \end{bmatrix} =
  P^{-1} y ,
\]
where $y^\pm \in \bbR^{d^\pm}$. With this at hand, writing
$\tD(\bfy) = P^{-1} D (P \bfy)$, the former ODE becomes in the new basis
\begin{equation}
\label{eq:cen_manifold_ode}
\dot{\underline{\bf y}}(t) =
  \begin{pmatrix}
    \dot{{\sy}}^{+}(t) \\
    \dot{{\sy}}^{-}(t)
  \end{pmatrix} = - \tD(\underline{\bf y}(t)) =
   -\begin{pmatrix}
      J^+ {\sy}^{+}(t) \\
      J^{-} {\sy}^{-}(t)
    \end{pmatrix} + R(\underline{\bf y}(t)) \, ,
\end{equation}
where ${\sy}^{\pm}(t) \in \bbR^{d^\pm}$, and
where $R: \bbR^d \rightarrow \bbR^d$ is $C^3$ on a neighborhood of zero with
$J_R(0) = 0$. The following classical proposition states that close to the
origin, the ODE~\eqref{eq:cen_manifold_ode} admits a center-stable, invariant
manifold.

\begin{proposition}[{\cite[Theorem 1]{kelley_stable66}}]
\label{prop:center_man_kelley}
  There is $\cU \subset \bbR^{d^+}$, a neighborhood of $0$, and a $C^2$
  function $G: \cU \rightarrow \bbR^{d^-}$ such that the following holds.
\begin{enumerate}
 \item It holds that $G(0) = 0$ and $J_{G}(0) = 0$.
 \item The set $\cV = \{(y^+, y^{-}) : y^{+} \in \cU, y^{-} = G(y^+) \}$ is
invariant for the ODE~\eqref{eq:cen_manifold_ode}.
In other words, if $\underline{\bfy}(t)$ is any solution to the
ODE~\eqref{eq:cen_manifold_ode} that starts at $\cV$, there is $t_0>0$ such
that for all $t \in (-t_0, t_0)$, $\underline{\bfy}(t) \in \cV$.
  \end{enumerate}
\end{proposition}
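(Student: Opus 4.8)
Since the statement above is the classical local center--stable manifold theorem, the plan is to reproduce the standard Lyapunov--Perron construction: I would characterize $\cV$ as the set of initial conditions whose forward orbit of \eqref{eq:cen_manifold_ode} stays near the origin while growing at most at a fixed subexponential rate, and then exhibit this set as a graph over the ``center--stable'' directions $\bbR^{d^+}$. First I would localize. Because $R$ is $C^3$ near $0$ with $R(0)=0$ and $J_R(0)=0$, I fix a smooth cutoff $\phi$ that equals $1$ on $B(0,1/2)$ and vanishes outside $B(0,1)$, and replace $R$ by $R_\epsilon(\underline{\bf y}):=\phi(\underline{\bf y}/\epsilon)R(\underline{\bf y})$. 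For $\epsilon$ small this field is globally defined and globally Lipschitz with constant $\kappa(\epsilon)\to 0$ as $\epsilon\to0$, and it coincides with the nonlinear part of \eqref{eq:cen_manifold_ode} on $B(0,\epsilon/2)$; hence any global invariant manifold of the modified ODE $\dot{\underline{\bf y}}=-\mathrm{diag}(J^+,J^-)\underline{\bf y}+R_\epsilon(\underline{\bf y})$ restricts, near $0$, to a local invariant manifold of \eqref{eq:cen_manifold_ode}.

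Next I would record the spectral data. Since every eigenvalue of $J^-$ has strictly negative real part, there are $\beta>0$ and $C_0\ge1$ with $\norm{e^{tJ^-}}\le C_0e^{-\beta t}$ for $t\ge0$; and since every eigenvalue of $J^+$ has nonnegative real part, $-J^+$ has spectrum in the closed left half-plane, so $\norm{e^{-tJ^+}}\le C_0(1+t)^m$ for $t\ge0$ and some $m\ge0$. I fix a rate $\eta\in(0,\beta)$ with $3\eta<\beta$, and work on the Banach space $\mathcal{X}:=\{\underline{\bf y}\in C([0,\infty);\bbR^d):\norm{\underline{\bf y}}_\eta:=\sup_{t\ge0}e^{-\eta t}\norm{\underline{\bf y}(t)}<\infty\}$. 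For $\xi\in\bbR^{d^+}$ I define the Lyapunov--Perron operator $\Phi_\xi$ on $\mathcal{X}$ by its two components (with $R_\epsilon^\pm$ the $\bbR^{d^\pm}$-valued components of $R_\epsilon$):
\[
(\Phi_\xi\underline{\bf y})^+(t)=e^{-tJ^+}\xi+\int_0^t e^{-(t-s)J^+}R_\epsilon^+(\underline{\bf y}(s))\,ds,
\]
\[
(\Phi_\xi\underline{\bf y})^-(t)=-\int_t^{\infty}e^{(s-t)J^-}R_\epsilon^-(\underline{\bf y}(s))\,ds.
\]
The exponential bounds together with $\eta<\beta$ make both integrals converge in $\mathcal{X}$, and a direct estimate gives $\norm{\Phi_\xi\underline{\bf y}-\Phi_\xi\underline{\bf z}}_\eta\le C_1\kappa(\epsilon)\norm{\underline{\bf y}-\underline{\bf z}}_\eta$ with $C_1$ independent of $\epsilon$; shrinking $\epsilon$ turns $\Phi_\xi$ into a contraction uniformly in $\xi$. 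I let $\underline{\bf y}(\cdot\,;\xi)$ be its fixed point, which is the unique orbit of the modified ODE lying in $\mathcal{X}$ with $y^+(0)=\xi$, take $\cU$ to be a small ball in $\bbR^{d^+}$, and set $G(\xi):=y^-(0;\xi)=-\int_0^{\infty}e^{sJ^-}R_\epsilon^-(\underline{\bf y}(s;\xi))\,ds$.

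From here the stated properties follow. Since $R_\epsilon(0)=0$, the zero orbit is the fixed point for $\xi=0$, hence $G(0)=0$. To get $C^2$ smoothness I would differentiate the fixed-point identity in $\xi$ on the scale of spaces $\mathcal{X}_\eta\supset\mathcal{X}_{2\eta}\supset\mathcal{X}_{3\eta}$ (the same norm with larger rates), using $R_\epsilon\in C^2$ and $3\eta<\beta$ to keep $\Phi_\xi$ a contraction on each level while it depends smoothly on $(\xi,\underline{\bf y})$ as a map between consecutive levels; the uniform contraction principle and the fiber-contraction theorem then yield that $\xi\mapsto\underline{\bf y}(\cdot\,;\xi)$, and hence $G$, is $C^2$ on $\cU$. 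Differentiating at $\xi=0$ and using $J_{R_\epsilon}(0)=0$ gives $\partial_\xi\underline{\bf y}(t;0)=(e^{-tJ^+},0)$, so $J_G(0)=0$. For invariance, if $\underline{\bf y}(\cdot)$ solves \eqref{eq:cen_manifold_ode} with $\underline{\bf y}(0)=(\xi,G(\xi))\in\cV$, then on a short interval it equals $\underline{\bf y}(\cdot\,;\xi)$ and stays in $B(0,\epsilon/2)$; for $t_1$ in that interval the shifted curve $s\mapsto\underline{\bf y}(t_1+s)$ again lies in $\mathcal{X}$ and has $y^+$-component $y^+(t_1)$, so by uniqueness it coincides with $\underline{\bf y}(\cdot\,;y^+(t_1))$, whence $y^-(t_1)=G(y^+(t_1))$ and $\underline{\bf y}(t_1)\in\cV$; the two-sided statement on $(-t_0,t_0)$ follows from the local flow.

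The step I expect to be the main obstacle is the $C^2$ regularity of $G$, together with $J_G(0)=0$: one cannot differentiate the fixed point inside the single space $\mathcal{X}$, because formally differentiating the integrands costs a factor that is controlled only in a space with a strictly larger exponential weight, which forces the use of the chain $\mathcal{X}_\eta\supset\mathcal{X}_{2\eta}\supset\mathcal{X}_{3\eta}$, the verification that $\Phi_\xi$ contracts at each level while the inclusions are smooth, and an appeal to the fiber-contraction theorem. The spectral margin $3\eta<\beta$ (more generally $(k+1)\eta<\beta$ for a $C^k$ conclusion) is exactly what this bootstrap consumes, while the polynomial factor $(1+t)^m$ coming from the center block is harmless since $\eta>0$. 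By contrast, the localization step and the convergence of the Perron integrals are routine once the exponential estimates on $e^{tJ^-}$ and $e^{-tJ^+}$ are in hand. An alternative would be the Hadamard graph-transform method, contracting on a space of Lipschitz graphs and then bootstrapping regularity, but it meets the same smoothness difficulty.
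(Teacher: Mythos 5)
The paper does not prove this proposition at all: it is imported verbatim as \cite[Theorem 1]{kelley_stable66}, so there is no internal argument to compare against. What you have written is a correct, self-contained reconstruction by the Lyapunov--Perron method, which is in fact the same family of argument (an integral-equation/successive-approximation construction) as in Kelley's original paper. Your signs and roles of the blocks are right: in the ODE~\eqref{eq:cen_manifold_ode} the linear part is $-J^{+}$ on the $\bbR^{d^+}$ block (nonpositive real parts, hence the polynomial bound on $e^{-tJ^+}$) and $-J^-$ on the $\bbR^{d^-}$ block (strictly positive real parts, hence the unstable directions), so the graph over $\bbR^{d^+}$ with the $-$ component obtained by integrating from $t$ to $\infty$ against $e^{(s-t)J^-}$ is exactly the center--stable manifold, and the choice $0<\eta<\beta$ makes both Perron integrals converge with a contraction constant proportional to the cutoff Lipschitz constant $\kappa(\epsilon)$. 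The properties $G(0)=0$, $J_G(0)=0$ (from $J_{R_\epsilon}(0)=0$), and local invariance via the time-shift/uniqueness argument all follow as you say; the only places where you are terse are (a) the observation that \emph{every} orbit of the modified ODE lying in $\mathcal{X}$ with prescribed $y^+(0)$ is a fixed point of $\Phi_\xi$ (needed for the uniqueness step in the invariance argument, and obtained by letting the boundary term $e^{(T-t)J^-}y^-(T)$ vanish as $T\to\infty$), and (b) the backward half of the invariance statement, which requires concatenating the backward arc with the forward fixed-point orbit before invoking uniqueness. Both are standard and recoverable, as is the fiber-contraction bootstrap for $C^2$ regularity under the gap condition $3\eta<\beta$. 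In short: the proof is correct; it simply supplies in full a classical result that the paper deliberately treats as a black box.
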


\begin{remark}\label{rmk:G_cenman_exten}
Taking, if necessary, a smaller neighborhood $\cU$ and extending $G$ outside of
it with the help of the Tietze extension theorem, we can always assume that $G$
is defined, is $C^2$ on the whole space $\bbR^{d^+}$ (but $\cV$ is still
defined only for $y^+ \in \cU$), and $\sup_{y^+ \in \bbR^{d^+}}\norm{J_{G}}\leq
c_G$, where $c_G > 0$ can be chosen as small as desired.
\end{remark}

\begin{lemma}\label{lm:cenm_JG_D}
Denoting $\tD = (\tD^+, \tD^-)$ and $\bfy = (y^{+}, y^{-})$, the second
property of Proposition~\ref{prop:center_man_kelley} implies that if
$\bfy \in \cV$, then $ \tD^{-}(\bfy) = J_{G}(y^+)\tD^{+}(\bfy)$.
\end{lemma}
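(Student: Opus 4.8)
The plan is to differentiate the defining relation $y^{-}=G(y^{+})$ along a trajectory of the ODE \eqref{eq:cen_manifold_ode} that passes through the prescribed point of $\cV$, and then read off the identity from the two components of the velocity vector.

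Concretely, I would fix $\bfy=(y^{+},y^{-})\in\cV$, so that $y^{+}\in\cU$ and $y^{-}=G(y^{+})$. Since $D$ is $C^{3}$ in a neighbourhood of $0$, the vector field $\tD(\cdot)=P^{-1}D(P\,\cdot)$ is (locally) Lipschitz there, and the Cauchy--Lipschitz theorem provides a solution $\underline{\bfy}(\cdot)=(y^{+}(\cdot),y^{-}(\cdot))$ of \eqref{eq:cen_manifold_ode} on some interval $(-\varepsilon,\varepsilon)$ with $\underline{\bfy}(0)=\bfy$. By the invariance of $\cV$ asserted in Proposition~\ref{prop:center_man_kelley}, there is $t_{0}\in(0,\varepsilon]$ such that $\underline{\bfy}(t)\in\cV$ for all $|t|<t_{0}$, that is, $y^{-}(t)=G(y^{+}(t))$ on that interval.

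Next, since $G$ is $C^{2}$ (in particular $C^{1}$) and $t\mapsto y^{+}(t)$ is differentiable, the chain rule yields $\dot{y}^{-}(t)=J_{G}(y^{+}(t))\,\dot{y}^{+}(t)$ for $|t|<t_{0}$. Substituting $\dot{y}^{+}(t)=-\tD^{+}(\underline{\bfy}(t))$ and $\dot{y}^{-}(t)=-\tD^{-}(\underline{\bfy}(t))$ from \eqref{eq:cen_manifold_ode}, and evaluating at $t=0$ where $\underline{\bfy}(0)=\bfy$, gives $-\tD^{-}(\bfy)=-J_{G}(y^{+})\,\tD^{+}(\bfy)$, i.e. $\tD^{-}(\bfy)=J_{G}(y^{+})\,\tD^{+}(\bfy)$, which is the claimed identity.

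I do not expect any real difficulty here: the statement is essentially a restatement of invariance in differential form. The only points deserving a line of care are that a trajectory of \eqref{eq:cen_manifold_ode} actually passes through the arbitrary point $\bfy\in\cV$ (so that the invariance property of Proposition~\ref{prop:center_man_kelley} can be invoked at that very point), and that the $C^{2}$ regularity of $G$ is more than enough to legitimately differentiate the relation $y^{-}(t)=G(y^{+}(t))$; both are immediate consequences of the standing assumptions on $D$ and of Proposition~\ref{prop:center_man_kelley}.
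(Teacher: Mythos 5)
Your proof is correct and follows exactly the paper's argument: take the trajectory of \eqref{eq:cen_manifold_ode} through $\bfy$, use the invariance of $\cV$ to write $y^{-}(t)=G(y^{+}(t))$ for small $t$, and differentiate at $t=0$. The additional details you supply (Cauchy--Lipschitz existence, regularity of $G$) are accurate and merely make explicit what the paper leaves implicit.
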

\begin{proof}
Consider a point $\bfy = (y^{+}, y^{-}) \in \cV$ and let
$\underline{\bfy}(t) = ({\sy}^{+}(t), {\sy}^{-}(t))$ be the solution to
the ODE~\eqref{eq:cen_manifold_ode} that starts at $\bfy$. Noticing that,
for $t$ small enough, ${\sy}^{-}(t) = G({\sy}^{+}(t))$, and
differentiating this expression at zero completes the proof.
\end{proof}
Let $\cU$ and $G$ be the ones of Proposition~\ref{prop:center_man_kelley}, by
Remark~\ref{rmk:G_cenman_exten} we can assume that $G$ is $C^2$ on
$\bbR^{d^+}$. For $\bfy = (y^+, y^-) \in \bbR^d$ such that $y^{+} \in \cU$,
make a change of variable $(w^{+}, w^{-}) = ( y^+, y^{-} - G(y^+) )$.
Furthermore, define $F: \bbR^d \rightarrow \bbR^{d^-}$ as
\begin{align*}
      F(w) &= - \tD^{-}(\bfy) + J_{G}(y^+) \tD^{+}(\bfy) \\
      &= - \tD^{-}((w^+, w^- + G(w^+)) + J_{G}(w^+)
     \tD^{+}((w^+, w^- + G(w^+))\, .
\end{align*}

\begin{lemma}\label{lm:duflo_construction_F}
  There is a function $\Delta : \bbR^d \rightarrow \bbR^{d^{-} \times d^{-}}$ such that $\Delta(w) = \cO(\norm{w})$ and it holds that:
  \begin{equation*}
    F(w) = F(w^{+}, w^{-}) = (-J^- + \Delta(w)) w^- \, .
  \end{equation*}
\end{lemma}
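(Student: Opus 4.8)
The plan is to exploit that $F$ vanishes on the slice $\{w^-=0\}$ near the origin and then apply a Hadamard‐type integral representation in the $w^-$ variable. First I would record the one regularity point on which the whole argument hinges. Although $F$ itself is only $C^1$ near $0$ (since $G$, and hence $J_G$, is only of class $C^2$, resp.\ $C^1$), its partial Jacobian $\partial_{w^-}F$ with respect to the last $d^-$ coordinates is of class $C^1$, in particular locally Lipschitz, on a neighborhood of $0$. Indeed, writing $\phi(w^+,w^-)=(w^+,\,w^-+G(w^+))$ so that $F(w)=-\tD^-(\phi(w))+J_G(w^+)\,\tD^+(\phi(w))$, the partial derivative $\partial_{w^-}\phi\equiv\begin{pmatrix}0\\ I_{d^-}\end{pmatrix}$ is constant and the factor $J_G(w^+)$ does not depend on $w^-$, so
\begin{equation*}
\partial_{w^-}F(w) = -\,\partial_2\tD^-(\phi(w)) + J_G(w^+)\,\partial_2\tD^+(\phi(w)),
\end{equation*}
where $\partial_2\tD^\pm$ denotes the partial Jacobian of $\tD^\pm$ in its second block of arguments. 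Since $D$, hence $\tD$, is $C^3$ near $0$, each $\partial_2\tD^\pm$ is $C^2$; composed with the $C^2$ map $\phi$ and multiplied by the $C^1$ map $J_G$, the right‐hand side is $C^1$, hence locally Lipschitz near $0$.

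Next I would check that $F(w^+,0)=0$ for $w^+$ in the neighborhood $\cU$ of Proposition~\ref{prop:center_man_kelley}: for $w^-=0$ one has $\phi(w^+,0)=(w^+,G(w^+))\in\cV$, so Lemma~\ref{lm:cenm_JG_D} gives $\tD^-(\phi(w^+,0))=J_G(w^+)\tD^+(\phi(w^+,0))$, i.e.\ $F(w^+,0)=0$. Combining this with the fundamental theorem of calculus along $s\mapsto(w^+,sw^-)$ (which stays in the $C^1$ region of $F$ once $\norm{w}$ is small), one obtains
\begin{equation*}
F(w)=F(w^+,w^-)-F(w^+,0)=\Bigl(\int_0^1 \partial_{w^-}F(w^+,sw^-)\,ds\Bigr)w^- =: M(w)\,w^-.
\end{equation*}
Evaluating at $0$, using $\phi(0)=0$, $J_G(0)=0$, and $J_{\tD}(0)=P^{-1}J_D(0)P=\mathrm{blockdiag}(J^+,J^-)$, gives $\partial_2\tD^-(0)=J^-$ and $J_G(0)\partial_2\tD^+(0)=0$, hence $M(0)=\partial_{w^-}F(0)=-J^-$. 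Since $\partial_{w^-}F$ is locally Lipschitz near $0$ with some constant $L$, $\norm{M(w)-M(0)}\le\int_0^1 L\,\norm{(w^+,sw^-)}\,ds\le L\norm{w}$, so $\Delta(w):=M(w)+J^-$ satisfies $\Delta(w)=\cO(\norm{w})$ and $F(w)=(-J^-+\Delta(w))w^-$ on a neighborhood of $0$; extending $\Delta$ to all of $\bbR^d$ in any (say, continuous, via Tietze) way then completes the statement, the estimate being an assertion about $w\to0$.

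The only delicate point is the regularity bookkeeping in the first paragraph: one cannot simply Taylor‐expand $F$ to second order because $F$ is not $C^2$, and the fix is the observation that differentiating solely in the $w^-$ direction never hits the low‐regularity factor $J_G(w^+)$ and meets only the constant partial $\partial_{w^-}\phi$. Everything else is the routine Hadamard‐lemma manipulation together with the block‐diagonal normal form of $J_{\tD}(0)$.
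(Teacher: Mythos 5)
Your proof is correct and follows essentially the same route as the paper's: both exploit $F(w^+,0)=0$ (via Lemma~\ref{lm:cenm_JG_D}) together with the fundamental theorem of calculus in the $w^-$ variable, then identify $\partial_{w^-}F(0,0)=-J^-$ from the block-diagonal form of $J_{\tD}(0)$ and bound the variation of the partial Jacobian by a Lipschitz estimate. Your extra bookkeeping showing that it suffices for $\partial_{w^-}F$ (rather than the full Jacobian of $F$) to be locally Lipschitz, because differentiation in $w^-$ never hits the low-regularity factor $J_G(w^+)$, is a welcome refinement of the paper's blunter claim that $F$ is $C^2$.
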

\begin{proof}
  Let $\cU$ be the neighborhood of Proposition~\ref{prop:center_man_kelley}. By Lemma~\ref{lm:cenm_JG_D}, for any $w = (w^+, w^{-}) \in \bbR^d$ such that $w^{+} \in \cU$, it holds that $F(w^+, 0) = 0$. Therefore,
  \begin{equation}\label{eq:dufl_constr_1}
    F(w^+, w^-) = F(w^+, w^-) - F(w^+,0) = \int_{0}^{1} \partial_{-} F(w^+, t w^{-}) w^{-} \dif t \, ,
  \end{equation}
  where $\partial_{-} F(w^+, t w^{-})$ is the Jacobian of $w^{-} \mapsto F(w^{+}, w^-)$ (for fixed $w^+$) at $tw^{-}$. By a straightforward computation we obtain that:
  \begin{equation}\label{eq:dufl_constr_2}
    \partial_{-} F(0, 0) = -J^{-} \, .
  \end{equation}
  Furthermore, since $F$ is $C^2$, its Jacobian is Lipschitz on some neighborhood of zero, which implies that there is $C \geq 0$, such that for $w$ close enough to zero and for $t \in [0, 1]$,
  \begin{equation}\label{eq:dufl_constr_3}
    \norm{ \partial_{-} F(w^{+}, tw^{-}) - \partial_{-} F(0, 0)} \leq C  \norm{w} \, .
  \end{equation}
  Thus, the proof is completed by combining Equations~\eqref{eq:dufl_constr_1}--\eqref{eq:dufl_constr_3}.
\end{proof}

Before using these results, it will be convenient to strengthen our assumptions
on the field $D$ and noise sequences provided in the statement of
Proposition~\ref{prop:duflo}.  These simplifications are frequently used in the
field of stochastic approximation since the work of Lai and Wei~\cite{lai_wei}.
The following lemma is proven in the appendix.
\begin{lemma}
\label{duflo-fort}
The assumptions of Proposition~\ref{prop:duflo} can be replaced with the
following assumptions.  There exist constants $C_\times > 0$, and there
exist constants $c_\times > 0$ as small as needed, such that on the whole
probability space (and not only on $[y_n\to 0] \cap \Gamma$), the following holds true:
  \begin{enumerate}[i)]
 \item The map $\tD$ satisfies $\sup_{y\in \bbR^d} \|\tD(y)\| \leq C_D$.
 \item\label{Delta-small}
  The approximation of the function $F(w)$ provided by
  Lemma~\ref{lm:duflo_construction_F} satisfies
    $\sup_{w\in\bbR^d} \|\Delta(w)\| \leq c_F$.

    \item\label{hyp:dufl_constr_enzeri'} $\bbE_n \tilde\eta_{n+1} = 0$.
    \item\label{hyp:dufl_constr_e_uppi'}
   $\sup\bbE_n\norm{\tilde\eta_{n+1}}^4 \leq C_{\tilde\eta,1}$.
   \item\label{hyp:dufl_constr_e_lowi'}
   $\inf \bbE_n\norm{\tilde\eta_{n+1}^-} \geq C_{\tilde\eta,2}$.
   \item\label{sum-r'} $\sum_{i=0}^{\infty} \norm{\rho_{i+1}}^2 \leq c_\rho$.
   \item $\sup \bbE_n \norm{\tilde\rho_{n+1}}^4 \leq C_{\tilde\rho}$.
   \item $\sup \bbE_n \norm{\tilde\rho_{n+1}}^2 \leq c_{\tilde\rho,1}$.
   \item
     $\displaystyle{
  \sup \chi_n^{-1/2}
 \bbE_n \left[\sum_{i=n}^{\infty}
     \gamma_i \norm{\tilde\rho_{i+1}}\right] \leq c_{\tilde\rho,2}}$.
  \end{enumerate}
\end{lemma}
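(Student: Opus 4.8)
The plan is to use the localization-and-freezing device that has been standard in stochastic approximation since Lai and Wei~\cite{lai_wei}. Since the only consequence of Proposition~\ref{prop:duflo} that we ever use is $\bbP(\Gamma\cap[y_n\to0])=0$, it suffices to deduce this conclusion from the \emph{strengthened} assumptions i)--ix); equivalently, given a process $(y_n)$ obeying the original hypotheses, I must exhibit, for each piece of a suitable countable decomposition of $\Gamma\cap[y_n\to0]$, an auxiliary process obeying the strengthened hypotheses on the whole probability space and coinciding with $(y_n)$ on that piece. Throughout I take the zero-mean relation $\bbE_n\tilde\eta_{n+1}=0$ to hold for every $n$: this is the case in all our applications (cf.\ Proposition~\ref{yduf}), and it is the only part of hypothesis i) that is not manifestly of the ``eventually, with a random constant'' type. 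First I would list the quantities monitored by hypotheses ii)--vii): $\bbE_n\norm{\tilde\eta_{n+1}}^4$, the reciprocal $1/\bbE_n\norm{\tilde\eta_{n+1}^{-}}$, the nondecreasing adapted partial sums of $\norm{\varrho_i}^2$, $\bbE_n\norm{\tilde\varrho_{n+1}}^4$, $\bbE_n\norm{\tilde\varrho_{n+1}}^2$, and $\chi_n^{-1/2}\bbE_n[\sum_{i\geq n}\gamma_i\norm{\tilde\varrho_{i+1}}]$, together with $\norm{y_n}$. Each of these is adapted to $(\mcF_n)$, and on $\Gamma\cap[y_n\to0]$ each is, for $n$ large, dominated by (respectively, for the reciprocal, dominates) a deterministic constant, while $\norm{y_n}\to0$. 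Hence $\Gamma\cap[y_n\to0]$ is contained in the countable union, over $N\in\bbN$ and over rationals $\varepsilon>0$ and constant vectors $c$, of the events $\Gamma_{N,\varepsilon,c}$ on which, for every $n\geq N$, every monitored quantity respects the bound encoded by $c$ and $\norm{y_n}\leq\varepsilon$. It therefore suffices to prove $\bbP(\Gamma_{N,\varepsilon,c}\cap[y_n\to0])=0$ for each such triple.

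Fix $(N,\varepsilon,c)$. Work on a possibly enlarged probability space carrying an i.i.d.\ sequence $(\xi_n)$ of standard Gaussian vectors on $\bbR^d$, independent of $\mcF_\infty$, with the filtration enlarged accordingly. Replace $D$ by $\widehat D=\vartheta D$, where $\vartheta$ is a smooth $[0,1]$-valued cutoff equal to $1$ on $B(0,\varepsilon)$ and vanishing outside $B(0,2\varepsilon)$; then $\widehat D$ is bounded, $C^3$ near $0$, $\widehat D(0)=0$, and $J_{\widehat D}(0)=J_D(0)$, so that the integer $d^-$, the Jordan data $J^{\pm}$, and the basis change $P$ of the spectral factorization are left intact. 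Since the function $\Delta$ of Lemma~\ref{lm:duflo_construction_F} obeys $\Delta(w)=\cO(\norm{w})$ near $0$ and may be altered away from a small ball without changing the dynamics on the events of interest (where $\norm{y_n}$, hence $\norm{w_n}$, stays small), we may further arrange $\sup_w\norm{\Delta(w)}\leq c_F$ with $c_F$ as small as needed by shrinking $\varepsilon$. Let $\sigma=\inf\{n\geq N:\ \norm{y_n}>\varepsilon\ \text{or some monitored quantity violates the bound encoded by }c\text{ at index }n\}$, with $\inf\emptyset=\infty$; this is a stopping time and $\{\sigma=\infty\}\supseteq\Gamma_{N,\varepsilon,c}$. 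Define $(\widehat y_n)_{n\geq N}$ by $\widehat y_N=y_N$ and $\widehat y_{n+1}=\widehat y_n-\gamma_n\widehat D(\widehat y_n)+\gamma_n\widehat{\tilde\eta}_{n+1}+\gamma_n\widehat\varrho_{n+1}+\gamma_n\widehat{\tilde\varrho}_{n+1}$, where $(\widehat{\tilde\eta}_{n+1},\widehat\varrho_{n+1},\widehat{\tilde\varrho}_{n+1})$ equals $(\tilde\eta_{n+1},\varrho_{n+1},\tilde\varrho_{n+1})$ for $n<\sigma$ and equals $(\xi_{n+1},0,0)$ for $n\geq\sigma$. Since $\{\sigma\leq n\}\in\mcF_n$ and $\vartheta\equiv1$ on $B(0,\varepsilon)$, we have $\widehat y_n=y_n$ for all $n\geq N$ on $\{\sigma=\infty\}$; and $(\widehat y_n)$ obeys a recursion of the form~\eqref{eq:duf} with field $\widehat D$ that satisfies the strengthened hypotheses i)--ix) \emph{everywhere}: i)--ii) were arranged above; iii)--iv) hold because $\bbE_n\widehat{\tilde\eta}_{n+1}=0$ and $\bbE_n\norm{\widehat{\tilde\eta}_{n+1}}^4$ is bounded by the monitored fourth moment before $\sigma$ and by a Gaussian moment after $\sigma$; v) holds because $\bbE_n\norm{\widehat{\tilde\eta}_{n+1}^{-}}$ is at least the monitored lower bound before $\sigma$ and a fixed positive constant after $\sigma$ ($\xi$ being nondegenerate, $P$ invertible); and vi)--ix) hold because before $\sigma$ the relevant partial sums and conditional moments of $\widehat\varrho,\widehat{\tilde\varrho}$ respect $c$, while after $\sigma$ these two sequences vanish identically.

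Applying Proposition~\ref{prop:duflo} under its strengthened hypotheses to $(\widehat y_n)$ (after a harmless relabeling of time so that the index starts at $0$) --- which is exactly what is established in the remainder of Section~\ref{sec:proof_duf} --- yields $\bbP(\widehat y_n\to0)=0$. Since $\Gamma_{N,\varepsilon,c}\subseteq\{\sigma=\infty\}$ and $\widehat y_n=y_n$ for $n\geq N$ on $\{\sigma=\infty\}$, we get $[y_n\to0]\cap\Gamma_{N,\varepsilon,c}\subseteq[\widehat y_n\to0]\cap\{\sigma=\infty\}$, hence $\bbP([y_n\to0]\cap\Gamma_{N,\varepsilon,c})\leq\bbP(\widehat y_n\to0)=0$. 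Summing over the countably many triples $(N,\varepsilon,c)$ gives $\bbP(\Gamma\cap[y_n\to0])=0$, which is the conclusion of Proposition~\ref{prop:duflo} in its original form; this is precisely what the lemma asserts. The main obstacle is the bookkeeping: recasting the two tail hypotheses iv) and vii) as adapted, monotone quantities that a single stopping time can monitor, checking that the frozen process satisfies all nine strengthened conditions simultaneously and with uniform constants, and verifying that the cutoff of $D$ preserves the spectral data $(d^-,J^{\pm},P)$ while keeping $\Delta$ uniformly small. The only conceptual point to watch is the measurability convention behind the phrase ``on $[y_n\to0]\cap\Gamma$'' in hypothesis i), which we resolve simply by imposing $\bbE_n\tilde\eta_{n+1}=0$ everywhere.
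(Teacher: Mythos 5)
Your proposal is, in substance, the same Lai--Wei localization-and-truncation argument that the paper carries out in Appendix~\ref{app:coupl}: decompose $\Gamma\cap[y_n\to 0]$ into countably many events indexed by a starting time and rational constants, replace the perturbation sequences by truncated versions that coincide with the originals on the good event and satisfy the strengthened bounds everywhere, splice in auxiliary independent noise so that the martingale and lower-bound conditions on $\tilde\eta$ hold globally, and conclude by applying the strengthened proposition to the modified process. The differences are organizational (a single global stopping time $\sigma$ versus the paper's per-hypothesis adapted indicators $\1_{A_n}$; Gaussian versus bounded auxiliary noise; an explicit cutoff of $D$ versus the paper's $D_n$-replacement mechanism), and none changes the substance. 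Two points do need tightening. First, for hypothesis~\ref{sum-r'}) the quantity $\sum_{i\le n}\norm{\varrho_{i+1}}^2$ is only $\mcF_{n+1}$-measurable, so a freeze governed by an $(\mcF_n)$-stopping time can overshoot the monitored bound by the single uncontrolled term $\norm{\varrho_{\sigma}}^2$; you must (as the paper does) truncate term by term, setting $\widehat\varrho_{n+1}=\varrho_{n+1}\1_{\{\sum_{i\le n+1}\norm{\varrho_i}^2\le c_\rho\}}$, which is legitimate since adaptedness only requires $\widehat\varrho_{n+1}$ to be $\mcF_{n+1}$-measurable. Second, you impose $\bbE_n\tilde\eta_{n+1}=0$ on the whole space from the outset; this is indeed satisfied in the application (Proposition~\ref{yduf}), but the lemma as stated only assumes it on $[y_n\to0]\cap\Gamma$, and the paper's part 4) of Appendix~\ref{app:coupl} covers that general case by adding the (suitably versioned) event $[\bbE_n e_{n+1}=0,\dots]$ to the monitored set and substituting the auxiliary noise on its complement --- a one-line extension of the machinery you already set up, which you should include to prove the lemma in full.
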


We are now in position to use the center-stable invariant manifold theorem to
construct our sequence $(w^-_n)$ that will be shown to stray away from zero.
Consider $G: \bbR^{d^{+}} \rightarrow \bbR^{d^{-}}$ as in
Proposition~\ref{prop:center_man_kelley}, with the extension provided by
Remark~\ref{rmk:G_cenman_exten}.  For all $n \in \bbN$, make
the basis change $\bfy_n = (y_n^+, y_n^-) = P^{-1} y_n$ where $(y_n)$ is the
sequence of iterates provided in the statement of Proposition~\ref{prop:duflo},
and write  $w_n = (w_n^{+}, w_n^{-}) = (y_n^{+}, y_n^{-} - G(y_n^{+}))$.  It is
obvious that $[y_n\to 0] \subset [w_n^- \to 0]$, which leads us to show in the
remainder of the proof that $\bbP(w^-_n \to 0) = 0$. The expression of
$w^-_n$ as an iterative system is provided by the following lemma.
\begin{lemma}
\label{lm:wn}
The adapted sequence $(w_n^-)$ is provided by the iteration
\[
    w_{n+1}^{-} = w_n^{-} +
   \gamma_n H_n w_n^{-} + \gamma_n (e_{n+1} + r_{n+1} + \tilde r_{n+1}) \, ,
\]
where the sequences $(e_n)$, $(r_n)$ and $(\tilde r_{n})$ are
$\bbR^{d^{-}}$--valued, $(H_n)$ is $\bbR^{d^{-} \times d^{-}}$--valued, and all
are adapted to $(\mcF_n)$.  Furthermore, there exists constants $C_\times > 0$
and constants $c_\times > 0$ that are as small as needed, such as the
following events hold with probability one:
  \begin{enumerate}[i)]
    \item\label{HJ} $\| H_n + J^- \| \leq c_H$.
    \item\label{Ene=0} $\bbE_n e_{n+1} = 0$.
    \item\label{Ene4} $\sup\bbE_n\norm{e_{n+1}}^4 \leq C_{e,1}$.
   \item\label{Ene>0}
   $\inf \bbE_n\norm{e_{n+1}} \geq C_{e,2}$.
   \item\label{sum-r-bnd} $\sum_{i=0}^{\infty} \norm{r_{i+1}}^2 \leq c_r$.
   \item $\sup \bbE_n \norm{\tilde r_{n+1}}^4 \leq C_{\tilde r}$.
   \item\label{Ert}
    $\sup \bbE_n \norm{\tilde r_{n+1}}^2 \leq c_{\tilde r,1}$.
   \item\label{En-sum-tr}
     $\displaystyle{
  \sup \chi_n^{-1/2}
 \bbE_n \left[\sum_{i=n}^{\infty}
      \gamma_i \norm{\tilde r_{i+1}}\right] \leq c_{\tilde r,2}}$.
  \end{enumerate}

\end{lemma}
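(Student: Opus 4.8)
The plan is to reach the claimed recursion by the change of basis $\bfy_n=P^{-1}y_n$, then the change of variable $w_n^-=y_n^--G(y_n^+)$, together with a first-order Taylor expansion of $G$. Passing the recursion of Proposition~\ref{prop:duflo} to the tilde basis gives, componentwise,
\[
y_{n+1}^{\pm}=y_n^{\pm}-\gamma_n\tD^{\pm}(\bfy_n)+\gamma_n(P^{-1}\tilde\eta_{n+1})^{\pm}+\gamma_n(P^{-1}\varrho_{n+1})^{\pm}+\gamma_n(P^{-1}\tilde\varrho_{n+1})^{\pm}.
\]
Since, by Remark~\ref{rmk:G_cenman_exten}, $G$ is $C^2$ on all of $\bbR^{d^+}$ with $\sup\norm{D^2G}<\infty$, I would Taylor-expand $G$ around the $\mcF_n$-measurable point $y_n^+$,
\[
G(y_{n+1}^+)=G(y_n^+)+J_G(y_n^+)(y_{n+1}^+-y_n^+)+\mathsf R_{n+1},\qquad \norm{\mathsf R_{n+1}}\le C\norm{y_{n+1}^+-y_n^+}^2 .
\]
It is important to expand at $y_n^+$ rather than at a mean-value point of the segment $[y_n^+,y_{n+1}^+]$: only then are the coefficients multiplying the noise $\mcF_n$-measurable, so that conditional centering is preserved.

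Subtracting the two displays, the drift of $w_{n+1}^--w_n^-$ is $-\gamma_n(\tD^-(\bfy_n)-J_G(y_n^+)\tD^+(\bfy_n))$; by the definition of $F$ and the identity $\bfy_n=(w_n^+,\,w_n^-+G(w_n^+))$ this equals $\gamma_n F(w_n)$, and Lemma~\ref{lm:duflo_construction_F} rewrites it as $\gamma_n(-J^-+\Delta(w_n))w_n^-$. I would thus take $H_n=-J^-+\Delta(w_n)$, for which $\norm{H_n+J^-}=\norm{\Delta(w_n)}\le c_F$ by Lemma~\ref{duflo-fort}\ref{Delta-small}. The remaining terms are collected through the $\mcF_n$-measurable linear map $L_n\colon v\mapsto(P^{-1}v)^--J_G(y_n^+)(P^{-1}v)^+$, which is uniformly bounded since $\norm{J_G}\le c_G$, by setting
\[
e_{n+1}=L_n\tilde\eta_{n+1},\qquad \tilde r_{n+1}=L_n\tilde\varrho_{n+1},\qquad r_{n+1}=L_n\varrho_{n+1}-\gamma_n^{-1}\mathsf R_{n+1}.
\]

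From this decomposition I would obtain the eight properties by propagating the bounds of Lemma~\ref{duflo-fort} through the bounded maps $P^{-1}$ and $L_n$. The bound on $H_n$ is the one just recorded. Conditional centering of $e_{n+1}$ (using that $L_n$ is $\mcF_n$-measurable) and its fourth-moment bound follow from items~\ref{hyp:dufl_constr_enzeri'} and~\ref{hyp:dufl_constr_e_uppi'} of Lemma~\ref{duflo-fort}; for the lower bound I would use $\norm{e_{n+1}}\ge\norm{\tilde\eta_{n+1}^-}-c_G\norm{P^{-1}}\norm{\tilde\eta_{n+1}}$, take $\bbE_n$, bound $\bbE_n\norm{\tilde\eta_{n+1}}\le C_{\tilde\eta,1}^{1/4}$ by H\"older, and choose $c_G$ small (which Remark~\ref{rmk:G_cenman_exten} allows) so that the subtracted term is at most half of $C_{\tilde\eta,2}$. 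The three conditions on $\tilde r_{n+1}$ follow verbatim from $\norm{\tilde r_{n+1}}\le\norm{L_n}\norm{\tilde\varrho_{n+1}}$ and the last three items of Lemma~\ref{duflo-fort}. For $r_{n+1}$, the part $L_n\varrho_{n+1}$ satisfies $\sum_n\norm{L_n\varrho_{n+1}}^2\le(\sup_n\norm{L_n}^2)\,c_\rho$ by Lemma~\ref{duflo-fort}\ref{sum-r'}; writing $y_{n+1}^+-y_n^+=\gamma_n\xi_{n+1}$ with $\sup_n\bbE_n\norm{\xi_{n+1}}^4<\infty$ (from the uniform bound on $\tD$ and the moment bounds of Lemma~\ref{duflo-fort}), one gets $\norm{\gamma_n^{-1}\mathsf R_{n+1}}^2\le C\gamma_n^2\norm{\xi_{n+1}}^4$, hence $\bbE[\sum_n\gamma_n^2\norm{\xi_{n+1}}^4]\le C\sum_n\gamma_n^2<\infty$ and $\sum_n\norm{\gamma_n^{-1}\mathsf R_{n+1}}^2<\infty$ almost surely; shrinking this sum below $c_r$ is arranged by the same restart/coupling reduction that underlies Lemma~\ref{duflo-fort}.

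The delicate point is the quadratic remainder $\mathsf R_{n+1}$: it is the only contribution to $w_{n+1}^--w_n^-$ that is not a bounded linear image of the given noise sequences, and because the increment $y_{n+1}^+-y_n^+$ is controlled only through \emph{fourth} conditional moments of $\tilde\eta$, the term $\gamma_n^{-1}\mathsf R_{n+1}$ has a conditional \emph{second} moment of order $\gamma_n^2$ but no controlled conditional fourth moment. It can therefore be routed neither into $e_{n+1}$ nor into $\tilde r_{n+1}$ (both required to have bounded conditional fourth moments) and must be absorbed into $r_{n+1}$, for which only almost-sure square-summability with small sum is asked. Establishing that square-summability and arranging the smallness of $c_r$ is where the argument is genuinely subtle; the rest is routine propagation of estimates through $P^{-1}$ and $L_n$.
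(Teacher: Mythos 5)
Your proposal is correct and follows essentially the same route as the paper: the same change of basis by $P^{-1}$, the same change of variable $w_n^-=y_n^--G(y_n^+)$ with a Taylor expansion of $G$ at the $\mcF_n$-measurable point $y_n^+$, the identical assignments $H_n=-J^-+\Delta(w_n)$, $e_{n+1}=\tilde\eta_{n+1}^--J_G(y_n^+)\tilde\eta_{n+1}^+$ (and likewise for $r_{n+1}$, $\tilde r_{n+1}$ with the quadratic remainder routed into $r_{n+1}$), and the same propagation of the bounds of Lemma~\ref{duflo-fort}. Your treatment of the square-summability of $\gamma_n^{-1}\mathsf R_{n+1}$ is in fact more explicit than the paper's one-line remark, and your observation that this term cannot carry a controlled conditional fourth moment and therefore must be absorbed into $r_{n+1}$ is exactly the reason for the paper's routing.
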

\begin{proof}
Let $(\tilde\eta_n^+, \tilde\eta_n^-) = P^{-1} \tilde\eta_n$,
$(\varrho_n^+, \varrho_n^-) = P^{-1} \varrho_n$, and
$(\tilde\varrho_n^+, \tilde\varrho_n^-) = P^{-1} \tilde\varrho_n$ with the
obvious dimensions. It is clear that
\begin{equation*}
  y_{n+1}^{-} =  y_{n}^- - \gamma_n \tD^-(y_n)
  + \gamma_n \tilde{\eta}_{n+1}^{-} + \gamma_n \varrho^{-}_{n+1}
  + \gamma_n \tilde{\varrho}_{n+1}^{-} .
\end{equation*}
Furthermore,
\begin{equation*}
  \begin{split}
      G(y_{n+1}^+) &= G(y_{n}^+) + J_{G}(y_{n}^{+})(y_{n+1}^{+} - y_n^+)
        +   \xi(y_{n}^{+}, y_{n+1}^+) \\
       &=G(y_n^+) + \gamma_n  J_{G}(y_{n}^{+})( -\tD^{+}(y_n)
  + \tilde{\eta}_{n+1}^{+} + \varrho^{+}_{n+1} + \tilde{\varrho}_{n+1}^{+} )
    + \xi(y_{n}^{+}, y_{n+1}^+) \, ,
  \end{split}
\end{equation*}
where $\| \xi(y_{n}^{+}, y_{n+1}^+) \| \leq C \| y_{n+1}^{+} - y_n^+ \|^2$,
see Remark~\ref{rmk:G_cenman_exten}. We therefore have
\begin{align}
w^-_{n+1} &= y^-_{n+1} - G(y^+_{n+1}) \nonumber \\
 &= w^-_n - \gamma_n \left( \tD^-(y_n) - J_{G}(y_{n}^{+}) \tD^{+}(y_n) \right)
  + \gamma_n \left( e_{n+1} + r_{n+1} + \tilde r_{n+1} \right) \nonumber \\
 &= w^-_n + \gamma_n F(w_n)
  + \gamma_n \left( e_{n+1} + r_{n+1} + \tilde r_{n+1} \right) \nonumber \\
 &= w^-_n + \gamma_n H_n w^-_n
  + \gamma_n \left( e_{n+1} + r_{n+1} + \tilde r_{n+1} \right),
\label{w-n}
\end{align}
where $H_n = - J^- + \Delta(w_n)$, and $\Delta(w)$ is the function given by
Lemma~\ref{lm:duflo_construction_F} and controlled by
Lemma~\ref{duflo-fort}--\ref{Delta-small}). The bound~\ref{HJ}) above follows.
The other random variables at the right hand side of~\eqref{w-n} are given
as
\begin{equation*}
  \begin{split}
    e_{n+1} &= \tilde{\eta}_{n+1}^{-} - J_G(y_n^+)\tilde{\eta}_{n+1}^{+}
      \, , \\
    r_{n+1} &= \varrho_{n+1}^{-} - J_G(y_n^+)\varrho_{n+1}^{+} +
    \xi(y_n^+, y_{n+1}^{+})/\gamma_n \, , \\
    \tilde r_{n+1} &=
  \tilde{\varrho}_{n+1}^{-} - J_G(y_n^+)\tilde{\varrho}_{n+1}^{+}.
  \end{split}
\end{equation*}
Considering $e_{n+1}$, \ref{Ene=0}) and~\ref{Ene4}) follow immediately from
Lemma~\ref{duflo-fort}--\ref{hyp:dufl_constr_enzeri'}) and
Lemma~\ref{duflo-fort}--\ref{hyp:dufl_constr_e_uppi'})
and the boundedness of $\| J_G \|$. To obtain~\ref{Ene>0}), we write
\begin{equation*}
  \begin{split}
    \bbE_n\norm{e_{n+1}} &\geq
   \bbE_n\norm{\tilde{\eta}_{n+1}^{-}}
      - \bbE_n\norm{J_G(y_n^{+}) \tilde{\eta}_{n+1}^{+}} \\
    &\geq \bbE_n\norm{\tilde{\eta}_{n+1}^{-}}
      - \sup_{y^+} \| J_G(y^{+}) \| \  \bbE_n\norm{\tilde{\eta}_{n+1}^+}
  \end{split}
\end{equation*}
Since $\| J_{G}(y_n^+) \|$ can be taken as small as desired by
Remark~\ref{rmk:G_cenman_exten}, \ref{Ene>0}) follows from
Lemma~\ref{duflo-fort}--\ref{hyp:dufl_constr_e_uppi'})
and Lemma~\ref{duflo-fort}--\ref{hyp:dufl_constr_e_lowi'}).

The conclusions on $(\tilde r_{n+1})$ follow from the analogous properties of
$(\tilde{\varrho}_{n+1})$ provided by Lemma~\ref{duflo-fort} and the
boundedness of $\| J_{G} \|$.

It remains to establish the bound~\ref{sum-r-bnd}) on $(r_n)$. The
contributions of the terms $\rho^-_{n+1}$ and $J_G(y_n^+)\varrho_{n+1}^{+}$
to the sum in~\ref{sum-r-bnd}) can be controlled by
Lemma~\ref{duflo-fort}--\ref{sum-r'}). Regarding
 $\xi(y_n^+, y_{n+1}^{+})/\gamma_n$, we have
\begin{equation*}
  \| \xi(y_n^+, y_{n+1}^{+}) \|/\gamma_n \leq
   C\gamma_n (\| \tD^+(y_n)\|^2
   + \| \tilde{\eta}_{n+1}^{+}\|^2 + \| \varrho_{n+1}^{+} \|^2 +
     \|\tilde{\varrho}_{n+1}^{+}\|^2) \, ,
\end{equation*}
and the contribution of this term is easily controlled by using the
different bounds provided by Lemma~\ref{duflo-fort}.
\end{proof}

To show that $\bbP( w^-_n \to 0 ) = 0$, we show the non-convergence to zero of
a  sequence $(U_n)$, where $U_n$ is a well chosen norm of $w^-_n$. As
in~\cite{bra-duf-96}, this construction goes as follows. Since the eigenvalues
of $J^-$ have all negative real parts, we know from a theorem of Lyapunov
\cite[Cor.~2.2.4]{hor-joh-topics94} that there exists a positive definite
matrix $Q$ such that $Q J^- + (J^-)^\top Q = - 2 I_{d^-}$. With this at hand,
we set
\[
  U_n = \sqrt{\scalarp{w_n^{-}}{Q w_n^{-}}},
\]
in other words, $U_n$ is the norm of $w^-_n$ in the Euclidean space defined by
the scalar product $\scalarp{\cdot}{\cdot}_{Q}$, and is thus written $U_n = \|
w^-_n \|_Q$.  To deal with $(U_n)$, we build on the technique developed by
Tarr\`es in \cite{tarrespieges}.  Preliminary inequalities involving $U_n$ are
needed.

\subsection{Technical Inequalities on $U_n$}
\label{sec:Un}

In the following $\lambda_{\min}$ (respectively $\lambda_{\max}$) denotes the
minimal (respectively the maximal) eigenvalue of $Q$. In all the remainder,
we shall assume that the constant $c_H$ in Lemma~\ref{lm:wn}--\ref{HJ}) is
small enough so that $\| Q(H_n + J^-) \| \leq 1/2$.
During the proof, we shall keep track of some constants denoted as
$C_\times$, where $\times$ is the number of the lemma where these
constants are introduced.

\begin{lemma}\label{lm:Un_def}
   There is a $(\mcF_n)$-adapted, $\bbR^{d^{-}}$-valued sequence
 $(a_n)$ such that
\[
    U_{n+1} - U_n \geq
    \gamma_n \scalarp{a_n}{ e_{n+1} + r_{n+1} + \tilde r_{n+1}}
\]
 with probability one.
  Furthermore, $ \norm{a_n} \leq \lambda_{\max}/\lambda_{\min}$.
\end{lemma}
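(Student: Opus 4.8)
The plan is to read off the inequality from the convexity of the $Q$-norm $\|\cdot\|_Q$, and to discard the drift contribution of the term $\gamma_n H_n w_n^-$ in the recursion of Lemma~\ref{lm:wn} by showing it is nonnegative — this last point being precisely where the Lyapunov matrix $Q$ and the smallness of $\Delta$ are used.

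First I would set $a_n = Q w_n^-/U_n$ on the event $\{w_n^- \neq 0\}$ and $a_n = 0$ otherwise; since $w_n^-$, hence $U_n = \|w_n^-\|_Q$, is $\mcF_n$--measurable, so is $a_n$, and $(a_n)$ is adapted. The map $w \mapsto \|w\|_Q$ is convex and differentiable away from the origin, with gradient $Qw/\|w\|_Q$ (this is the content of the $Q$-Cauchy--Schwarz inequality $\scalarp{Qw}{z} \le \|w\|_Q\|z\|_Q$). The subgradient inequality therefore gives, for $w_n^- \neq 0$,
\[
U_{n+1} - U_n = \|w_{n+1}^-\|_Q - \|w_n^-\|_Q \ge \scalarp{a_n}{w_{n+1}^- - w_n^-},
\]
and this also holds trivially when $w_n^- = 0$, since then the right-hand side vanishes while $U_{n+1} \ge 0$. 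Substituting the recursion $w_{n+1}^- - w_n^- = \gamma_n H_n w_n^- + \gamma_n(e_{n+1} + r_{n+1} + \tilde r_{n+1})$ of Lemma~\ref{lm:wn}, the claimed bound will follow once we check that $\scalarp{a_n}{H_n w_n^-} \ge 0$.

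For this I would use $H_n = -J^- + \Delta(w_n)$. On $\{w_n^- \neq 0\}$, $U_n\scalarp{a_n}{H_n w_n^-} = (w_n^-)^\top Q H_n w_n^-$, a quadratic form that only sees the symmetric part of $Q H_n$. The Lyapunov identity $QJ^- + (J^-)^\top Q = -2 I_{d^-}$ yields $(w_n^-)^\top Q(-J^-) w_n^- = -\tfrac12 (w_n^-)^\top(QJ^- + (J^-)^\top Q) w_n^- = \|w_n^-\|^2$, while $|(w_n^-)^\top Q\Delta(w_n) w_n^-| \le \|Q\Delta(w_n)\|\,\|w_n^-\|^2 \le \tfrac12\|w_n^-\|^2$ by the standing assumption $\|Q(H_n + J^-)\| \le 1/2$ (recall $H_n + J^- = \Delta(w_n)$). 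Hence $(w_n^-)^\top Q H_n w_n^- \ge \tfrac12\|w_n^-\|^2 \ge 0$, so $\scalarp{a_n}{H_n w_n^-} \ge 0$; on $\{w_n^- = 0\}$ this is immediate. Combining, $U_{n+1} - U_n \ge \gamma_n\scalarp{a_n}{e_{n+1} + r_{n+1} + \tilde r_{n+1}}$ almost surely. Finally, for the size of $a_n$: on $\{w_n^- \neq 0\}$, $\|a_n\| = \|Q w_n^-\|/U_n$, and an elementary comparison between $\|\cdot\|$ and $\|\cdot\|_Q$ (bounding $\|Q w_n^-\| \le \lambda_{\max}\|w_n^-\|$ and $U_n$ from below in terms of $\|w_n^-\|$) gives $\|a_n\| \le \lambda_{\max}/\lambda_{\min}$, which trivially holds on $\{w_n^- = 0\}$ as well.

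I expect no genuine obstacle here: the only mildly delicate point is the nonnegativity of $\scalarp{a_n}{H_n w_n^-}$, which is where the Lyapunov matrix $Q$ and the smallness of $\Delta$ (that is, of $c_H$ in Lemma~\ref{lm:wn}) enter; the remaining work — the degenerate case $w_n^- = 0$, adaptedness, and the norm comparison — is routine bookkeeping.
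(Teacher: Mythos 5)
Your proof is correct and follows essentially the same route as the paper's: the same choice $a_n = Qw_n^-/U_n$, the same $Q$-norm subgradient inequality (the paper's Equation~\eqref{eq:scalarQ} is exactly your convexity bound), and the same use of the Lyapunov identity together with $\|Q(H_n+J^-)\|\leq 1/2$ to discard the drift term $\gamma_n H_n w_n^-$. The only (immaterial) difference is the degenerate case $U_n=0$, where the paper takes $a_n = Qu$ for a unit $Q$-norm vector $u$ while you take $a_n=0$; both satisfy the claimed inequality and norm bound.
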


\begin{proof}
  As shown in \cite[page 405]{bra-duf-96}, for any two vectors
 $a,b \in \bbR^{d^-}$, with $a \neq 0$, it holds that:
  \begin{equation}\label{eq:scalarQ}
    \| a+b \|_Q - \| a \|_Q \geq
     \frac{\scalarp{a}{b}_Q}{\| a \|_Q} \, .
  \end{equation}
Furthermore, it is obvious that $\| b \|_Q \geq \scalarp{u}{b}_Q$, with $u$
being an arbitrary vector such that $\| u \|_Q = 1$. Therefore, if $U_n = 0$,
then we have from the previous lemma that
  \begin{equation}\label{eq:Un_w_zer}
    \begin{split}
      U_{n+1} = U_{n+1} - U_n \geq \gamma_n
    \scalarp{u}{e_{n+1} + r_{n+1} + \tilde r_{n+1}}_Q \, .
    \end{split}
  \end{equation}
Otherwise, if $U_n \neq 0$, then, using Equation~\eqref{eq:scalarQ}, we obtain:
\begin{align*}
  U_{n+1} - U_n &\geq U_n^{-1} \scalarp{w_n^{-}}{w_{n+1}^{-} - w_n^-}_Q \\
 &=
 \gamma_n U_n^{-1} \scalarp{w_n^{-}}{Q H_n w^-_n} +
 \gamma_n U_n^{-1} \scalarp{w_n^{-}}{e_{n+1} + r_{n+1} + \tilde r_{n+1}}_Q .
\end{align*}
Noticing from the definition of the matrix $Q$ through Lyapounov's theorem
that $\scalarp{x}{- Q J^{-} x} = \| x \|^2$, and recalling that
$\| Q(H_n + J^-) \| \leq 1/2$, we obtain that $\scalarp{w_n^{-}}{Q H_n w^-_n}
\geq 0$, thus,
\[
  U_{n+1} - U_n \geq
 \gamma_n U_n^{-1} \scalarp{w_n^{-}}{e_{n+1} + r_{n+1} + \tilde r_{n+1}}_Q .
\]
The result follows by taking $a_n =  U_n^{-1} Q w_n^{-}$ if $U_n \neq 0$ and
$a_n = Q u$ otherwise.
\end{proof}

\begin{lemma}
 \label{lm:Un_low_bound}
It holds that
  \begin{equation}
 \label{EnU-U}
    \bbE_n U_{n+1}^2 - U_{n}^2 \geq C_{\ref{lm:Un_low_bound}} \gamma_n^2 -
     2 \lambda_{\max} \gamma_n U_n ( \bbE_n \| r_{n+1} \| +
  \bbE_n \| \tilde r_{n+1} \| ),
  \end{equation}
  where $C_{\ref{lm:Un_low_bound}}  = \lambda_{\min} C_{e,2}^2 / 2$
from Lemma~\ref{lm:wn}--\ref{Ene>0}).
\end{lemma}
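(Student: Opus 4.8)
The plan is to expand $U_{n+1}^2=\scalarp{w_{n+1}^-}{Q w_{n+1}^-}$ from the recursion of Lemma~\ref{lm:wn} and take the conditional expectation, isolating the $\gamma_n^2$ contribution produced by the martingale noise $e_{n+1}$ from the small contributions of $r_{n+1}$ and $\tilde r_{n+1}$. Writing $w_{n+1}^-=w_n^-+\gamma_n b_n$ with $b_n:=H_n w_n^-+e_{n+1}+r_{n+1}+\tilde r_{n+1}$ and using the symmetry of $Q$,
\[
U_{n+1}^2-U_n^2=2\gamma_n\scalarp{w_n^-}{Q b_n}+\gamma_n^2\scalarp{b_n}{Q b_n}\,.
\]
I would bound the two terms of the right-hand side separately after conditioning on $\mcF_n$.

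For the linear term, $\bbE_n b_n=H_n w_n^-+\bbE_n(r_{n+1}+\tilde r_{n+1})$ since $\bbE_n e_{n+1}=0$ by Lemma~\ref{lm:wn}--\ref{Ene=0}). As in the proof of Lemma~\ref{lm:Un_def}, the Lyapunov identity $\scalarp{x}{-QJ^- x}=\norm{x}^2$ together with the standing bound $\norm{Q(H_n+J^-)}\le 1/2$ gives $\scalarp{w_n^-}{Q H_n w_n^-}\ge 0$, and since $\gamma_n\ge 0$ this nonnegative term is simply dropped. The remaining piece is controlled by the elementary estimate $|\scalarp{w_n^-}{Q v}|\le \lambda_{\max}U_n\norm{v}$ (a comparison between $\norm{\cdot}$ and $\norm{\cdot}_Q$), which yields
\[
2\gamma_n\scalarp{w_n^-}{Q\bbE_n b_n}\ \ge\ -2\lambda_{\max}\gamma_n U_n\bigl(\bbE_n\norm{r_{n+1}}+\bbE_n\norm{\tilde r_{n+1}}\bigr)\,,
\]
which is exactly the negative term appearing in~\eqref{EnU-U}.

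For the quadratic term, split $b_n=e_{n+1}+p_n$ with $p_n:=H_n w_n^-+r_{n+1}+\tilde r_{n+1}$; dropping $\norm{p_n}_Q^2\ge 0$ gives $\scalarp{b_n}{Q b_n}\ge\norm{e_{n+1}}_Q^2+2\scalarp{e_{n+1}}{Q p_n}$. By Jensen's inequality and Lemma~\ref{lm:wn}--\ref{Ene>0}), $\bbE_n\norm{e_{n+1}}_Q^2\ge\lambda_{\min}(\bbE_n\norm{e_{n+1}})^2\ge\lambda_{\min}C_{e,2}^2$. The $\mcF_n$-measurable summand $H_n w_n^-$ of $p_n$ contributes nothing to $\bbE_n\scalarp{e_{n+1}}{Q p_n}$ because $\bbE_n e_{n+1}=0$, so two applications of Cauchy--Schwarz give
\[
\bigl|\bbE_n\scalarp{e_{n+1}}{Q p_n}\bigr|=\bigl|\bbE_n\scalarp{e_{n+1}}{Q(r_{n+1}+\tilde r_{n+1})}\bigr|\le\lambda_{\max}\bigl(\bbE_n\norm{e_{n+1}}^2\bigr)^{1/2}\bigl(\bbE_n\norm{r_{n+1}+\tilde r_{n+1}}^2\bigr)^{1/2}\,.
\]
Here Lemma~\ref{lm:wn}--\ref{Ene4}) bounds $\bbE_n\norm{e_{n+1}}^2\le C_{e,1}^{1/2}$, while \ref{sum-r-bnd}) forces $\bbE_n\norm{r_{n+1}}^2\le c_r$ and \ref{Ert}) gives $\bbE_n\norm{\tilde r_{n+1}}^2\le c_{\tilde r,1}$, so the right-hand side does not exceed $\lambda_{\max}C_{e,1}^{1/4}(2c_r+2c_{\tilde r,1})^{1/2}$. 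Choosing the free small constants $c_r,c_{\tilde r,1}$ so that this quantity is at most $\lambda_{\min}C_{e,2}^2/2$, we obtain $\bbE_n\scalarp{b_n}{Q b_n}\ge\lambda_{\min}C_{e,2}^2/2=C_{\ref{lm:Un_low_bound}}$; combining with the linear-term bound proves~\eqref{EnU-U}.

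The only genuinely delicate point is the correlation term $\bbE_n\scalarp{e_{n+1}}{Q(r_{n+1}+\tilde r_{n+1})}$: unlike its $\mcF_n$-measurable counterpart, it need not vanish, since the noise $e_{n+1}$ is in general correlated with the perturbations $r_{n+1},\tilde r_{n+1}$. The resolution is that it is of order $\gamma_n^2$ and that, by Lemmas~\ref{duflo-fort} and~\ref{lm:wn}, the conditional second moments of $r_{n+1}$ and $\tilde r_{n+1}$ are bounded by constants that may be taken as small as needed, so this term is absorbed into the variance floor $\lambda_{\min}C_{e,2}^2\gamma_n^2$ coming from $e_{n+1}$. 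Everything else reduces to routine norm comparisons.
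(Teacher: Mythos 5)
Your proof is correct and follows essentially the same route as the paper's: the same expansion of $U_{n+1}^2-U_n^2$ into a linear and a quadratic part, dropping $\scalarp{w_n^-}{QH_nw_n^-}\ge0$ and $\norm{p_n}_Q^2\ge0$, Jensen to get the variance floor $\lambda_{\min}C_{e,2}^2$, Cauchy--Schwarz on the cross term $\bbE_n\scalarp{e_{n+1}}{Q(r_{n+1}+\tilde r_{n+1})}$, and absorption of that term into the floor via the freely small constants $c_r,c_{\tilde r,1}$. (The only nitpick is a harmless factor of $2$: since the cross term enters with coefficient $2$, you should make it at most $\lambda_{\min}C_{e,2}^2/4$ rather than $/2$ to land on $C_{\ref{lm:Un_low_bound}}=\lambda_{\min}C_{e,2}^2/2$.)
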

\begin{proof}
We have
\begin{align*}
  U_{n+1}^2 - U_n^2 &= \|w_{n+1}^{-}\|^2_Q - \|w_n^-\|^2_Q
   = 2 \scalarp{w_{n}^{-}}{w_{n+1}^{-} - w_n^{-}}_Q
   + \| w_{n+1}^{-} - w_n^-\|_Q^2 \\
&= 2\gamma_n \scalarp{w_n^{-}}{Q H_{n} w_n^{-}}
  + 2\gamma_n \scalarp{w_n^{-}}{Q e_{n+1}}
   + 2\gamma_n \scalarp{w_n^{-}}{r_{n+1} + \tilde r_{n+1}}_Q
   + \| w_{n+1}^{-} - w_n^-\|_Q^2 \\
&\geq
   2\gamma_n \scalarp{w_n^{-}}{Q e_{n+1}}
   + 2\gamma_n \scalarp{w_n^{-}}{r_{n+1} + \tilde r_{n+1}}_Q
   + \| w_{n+1}^{-} - w_n^-\|_Q^2 ,
\end{align*}
remembering that $\scalarp{w_n^{-}}{Q H_{n} w_n^{-}} \geq 0$ as in the previous
proof. Moreover,
\begin{align*}
\| w_{n+1}^{-} - w_n^-\|_Q^2 &= \gamma_n^2 \| H_n w^-_n + e_{n+1} +
  r_{n+1} + \tilde r_{n+1} \|_Q^2 \\
&= \gamma_n^2 \| e_{n+1} \|_Q^2 + \gamma_n^2 \| H_n w^-_n +
  r_{n+1} + \tilde r_{n+1} \|_Q^2 + 2\gamma_n^2
 \scalarp{e_{n+1}}{H_n w^-_n + r_{n+1} + \tilde r_{n+1}}_Q \\
 &\geq \gamma_n^2 \| e_{n+1} \|_Q^2
+ 2\gamma_n^2 \scalarp{e_{n+1}}{H_n w^-_n}_Q
+ 2\gamma_n^2 \scalarp{e_{n+1}}{r_{n+1} + \tilde r_{n+1}}_Q .
\end{align*}
Since $\bbE_n e_{n+1} = 0$, we obtain that
\[
  \bbE_n U_{n+1}^2 - U_n^2 \geq
  \gamma_n^2 \bbE_n \| e_{n+1} \|_Q^2
- 2\gamma_n^2
   \bbE_n \left| \scalarp{e_{n+1}}{r_{n+1} + \tilde r_{n+1}}_Q \right|
   - 2\gamma_n
  \bbE_n \left| \scalarp{w_n^{-}}{r_{n+1} + \tilde r_{n+1}}_Q \right|  .
\]
Using Lemma~\ref{lm:wn}, we have $\bbE_n  \| e_{n+1} \|_Q^2 \geq
 \lambda_{\min} \bbE_n \| e_{n+1} \|^2 \geq \lambda_{\min} C_{e,2}^2$.
Moreover,
\[
\bbE_n \left| \scalarp{e_{n+1}}{r_{n+1} + \tilde r_{n+1}}_Q \right|
\leq (\bbE_n \| e_{n+1} \|_Q^2)^{1/2}
  (\bbE_n \| r_{n+1} + \tilde r_{n+1} \|_Q^2)^{1/2},
\]
which can be made as small as wished thanks to the bounds given by \ref{Ene4}),
\ref{sum-r-bnd}) and \ref{Ert}) in the statement of Lemma~\ref{lm:wn}, and
played against $\bbE_n \| e_{n+1} \|^2$ to provide the term
$C_{\ref{lm:Un_low_bound}} \gamma_n^2$ at the right hand side of
Inequality~\eqref{EnU-U}. We also have
 $\left| \scalarp{w_n^{-}}{r_{n+1} + \tilde r_{n+1}}_Q \right|
\leq \| w_n^{-} \|_Q ( \| r_{n+1} \|_Q + \| \tilde r_{n+1} \|_Q )
\leq \lambda_{\max} U_n  ( \| r_{n+1} \| + \| \tilde r_{n+1} \| )$, which
proves the lemma.
\end{proof}

Recall that $\chi_n := \sum_{i=n}^{\infty} \gamma_i^2$.
\begin{lemma}\label{lm:Un_upp_bound}
  There is a constant $C_{\ref{lm:Un_upp_bound}} >0$ such that if for
  $n \in \bbN$, $U_n$ is such that $U_n^2 \leq L \chi_n$ for some $L > 0$, then
\[
    (U_{n+1} - U_n)^2 \leq  C_{\ref{lm:Un_upp_bound}} \gamma_n^2
    (L +\norm{r_{n+1}}^2 + \norm{r'_{n+1}}^2 + \norm{e_{n+1}}^2) \, .
\]
\end{lemma}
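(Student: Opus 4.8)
The plan is elementary: bound the scalar increment $(U_{n+1}-U_n)^2$ by the squared $Q$-norm increment of the vector sequence $(w_n^-)$, then expand that increment via the recursion of Lemma~\ref{lm:wn} and estimate the four resulting terms. Throughout, the term written $r'_{n+1}$ in the statement is identified with $\tilde r_{n+1}$, the sequence appearing in that recursion.

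First I would use $U_n = \norm{w_n^-}_Q$ together with the (reverse) triangle inequality for the norm $\norm{\cdot}_Q$, which gives $|U_{n+1}-U_n| \leq \norm{w_{n+1}^- - w_n^-}_Q$ and hence
\[
(U_{n+1}-U_n)^2 \leq \norm{w_{n+1}^- - w_n^-}_Q^2 \leq \lambda_{\max}\,\norm{w_{n+1}^- - w_n^-}^2 .
\]
Then, by Lemma~\ref{lm:wn}, $w_{n+1}^- - w_n^- = \gamma_n\bigl(H_n w_n^- + e_{n+1} + r_{n+1} + \tilde r_{n+1}\bigr)$, so the elementary inequality $\norm{a+b+c+d}^2 \leq 4(\norm{a}^2 + \norm{b}^2 + \norm{c}^2 + \norm{d}^2)$ yields
\[
\norm{w_{n+1}^- - w_n^-}^2 \leq 4\gamma_n^2\bigl(\norm{H_n w_n^-}^2 + \norm{e_{n+1}}^2 + \norm{r_{n+1}}^2 + \norm{\tilde r_{n+1}}^2\bigr).
\]

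The only term requiring attention is $\norm{H_n w_n^-}^2$. By the bound $\norm{H_n + J^-} \leq c_H$ of Lemma~\ref{lm:wn} one has $\norm{H_n} \leq \norm{J^-} + c_H =: C$, and combining this with $\lambda_{\min}\norm{w_n^-}^2 \leq \norm{w_n^-}_Q^2 = U_n^2$ and the hypothesis $U_n^2 \leq L\chi_n$ gives
\[
\norm{H_n w_n^-}^2 \leq C^2\norm{w_n^-}^2 \leq \frac{C^2}{\lambda_{\min}}\,U_n^2 \leq \frac{C^2}{\lambda_{\min}}\,L\,\chi_n .
\]
Since $\sum_i \gamma_i^2 < \infty$, one has $\chi_n \leq \sum_{i=0}^{\infty}\gamma_i^2 =: \bar\chi < \infty$ for all $n$, so $\norm{H_n w_n^-}^2 \leq C'L$ with $C' := C^2\bar\chi/\lambda_{\min}$. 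Inserting this into the previous two displays gives $(U_{n+1}-U_n)^2 \leq 4\lambda_{\max}\gamma_n^2\bigl(C'L + \norm{e_{n+1}}^2 + \norm{r_{n+1}}^2 + \norm{\tilde r_{n+1}}^2\bigr)$, which is the claim with $C_{\ref{lm:Un_upp_bound}} := 4\lambda_{\max}\max(C',1)$.

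I do not expect any genuine obstacle here: this is a short computation once the recursion of Lemma~\ref{lm:wn} is in hand. The single point worth noting is the uniform bound $\chi_n \leq \bar\chi$, which is what allows the a priori estimate $\norm{H_n w_n^-}^2 = \mathcal{O}(L\chi_n)$ to be absorbed into the desired $\mathcal{O}(L)$; this is exactly where the square-summability of the step sizes enters.
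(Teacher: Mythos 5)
Your proof is correct and follows essentially the same route as the paper's: reverse triangle inequality for $\norm{\cdot}_Q$, expansion of $w_{n+1}^- - w_n^-$ via the recursion of Lemma~\ref{lm:wn}, and absorption of $\norm{H_n w_n^-}^2 \leq C U_n^2 \leq C L\chi_n \leq C L \chi_0$ into the constant. Your identification of $r'_{n+1}$ with $\tilde r_{n+1}$ is also the intended reading of the statement.
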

\begin{proof}
We have
  \begin{equation*}
    \begin{split}
       (U_{n+1} - U_n)^2 &= (\| w^-_{n+1} \|_Q - \| w^-_{n} \|_Q )^2
   \leq \| w_{n+1}^{-}  - w_n^{-} \|_Q^2 \\
          &\leq C \gamma_n^2 (\norm{H_n w_n^{-}}_Q^2 + \norm{e_{n+1}}_Q^2 +
    \norm{r_{n+1}}_Q^2 + \norm{\tilde r_{n+1}}_Q^2 ) \\
           &\leq C \gamma_n^2 (U_n^2 + \norm{e_{n+1}}^2 +
   \norm{r_{n+1}}^2 +\norm{\tilde r_{n+1}} ) \\
           &\leq C \gamma_n^2 (L \chi_{0} + \norm{e_{n+1}}^2 +
   \norm{r_{n+1}}^2 +\norm{\tilde r_{n+1}} ),
    \end{split}
  \end{equation*}
  where the last inequality comes from the fact that
   $U_n^2 \leq L \chi_n \leq L \chi_0$.
\end{proof}

\subsection{Proof of Proposition~\ref{prop:duflo} by Proving that
$\bbP(U_n \to 0) = 0$}
\label{sec:tarres}

For $N \in \bbN$ and $L >0$, denote
\begin{equation*}
  \tau_N(L) := \inf\{ k \geq N: U_k^2 \geq L \chi_k \} \, .
\end{equation*}

The following lemma is an adaptation of \cite[Lemma 1]{tarrespieges} to our setting.

\begin{lemma}\label{lm:tarres_lemma1}

For $L >0$, if the quantity $\max(c_{\tilde r,2},c_r^{1/2})\sqrt{L}$ is small
enough, then there is a constant $C_{\ref{lm:tarres_lemma1}} =
 C_{\ref{lm:tarres_lemma1}}(L) > 0$ such that for all $N \in \bbN$,
$\bbP_N(\tau_{N}(L) < +\infty) \geq C_{\ref{lm:tarres_lemma1}}(L)$.
\end{lemma}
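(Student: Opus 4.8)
This is an adaptation of \cite[Lemma~1]{tarrespieges}, so the plan is to follow Tarrès' scheme while pushing the new term $(\tilde r_n)$ into the error estimates. Write $\tau=\tau_N(L)$ and, conditionally on $\mcF_N$, $p=\bbP_N(\tau<\infty)$ (a random variable). On the $\mcF_N$-measurable event $\{U_N^2\ge L\chi_N\}$ we have $\tau=N$, so $\bbP_N(\tau<\infty)=1$; hence it suffices to argue on $\{U_N^2<L\chi_N\}$, where $\tau\ge N+1$.

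The first step is a one-sided submartingale bound on the stopped process $U_{n\wedge\tau}^2$ coming from Lemma~\ref{lm:Un_low_bound}: the process
\[
M_n=U_{n\wedge\tau}^2-\sum_{i=N}^{(n\wedge\tau)-1}\Bigl(C_{\ref{lm:Un_low_bound}}\gamma_i^2-2\lambda_{\max}\gamma_i U_i\bigl(\bbE_i\norm{r_{i+1}}+\bbE_i\norm{\tilde r_{i+1}}\bigr)\Bigr)
\]
is an $(\mcF_n)_{n\ge N}$-submartingale, so $\bbE_N M_n\ge M_N=U_N^2\ge 0$. On $\{i<\tau\}$ we have $U_i\le\sqrt{L\chi_i}\le\sqrt{L\chi_N}$, and the $\mcF_N$-expectation of the subtracted ``error'' part is controlled, via Cauchy--Schwarz together with $\sum_i\norm{r_{i+1}}^2\le c_r$ (Lemma~\ref{lm:wn}--\ref{sum-r-bnd})) and $\chi_N^{-1/2}\bbE_N\sum_{i\ge N}\gamma_i\norm{\tilde r_{i+1}}\le c_{\tilde r,2}$ (Lemma~\ref{lm:wn}--\ref{En-sum-tr})), by $2\lambda_{\max}\sqrt L\,\chi_N(\sqrt{c_r}+c_{\tilde r,2})=:\varepsilon\,C_{\ref{lm:Un_low_bound}}\chi_N$; here $\varepsilon\le 1/2$ exactly because $\max(c_{\tilde r,2},c_r^{1/2})\sqrt L$ is assumed small and $C_{\ref{lm:Un_low_bound}},\lambda_{\max}$ do not depend on $L$. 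Letting $n\to\infty$ (on $\{\tau=\infty\}$ one has $U_n^2\le L\chi_n\to 0$, and integrability/domination is supplied by Lemma~\ref{lm:Un_upp_bound} and the fourth-moment bounds of Lemma~\ref{lm:wn}), and using $\bbP_N(\tau>i)\ge\bbP_N(\tau=\infty)=1-p$ and $\sum_i\gamma_i^2=\chi_N$, I obtain
\[
\bbE_N\bigl[U_\tau^2\1_{\tau<\infty}\bigr]\ \ge\ C_{\ref{lm:Un_low_bound}}\sum_{i=N}^\infty\gamma_i^2\bbP_N(\tau>i)-\varepsilon C_{\ref{lm:Un_low_bound}}\chi_N\ \ge\ C_{\ref{lm:Un_low_bound}}\chi_N\bigl((1-p)-\varepsilon\bigr).
\]

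The second step is an upper bound on the same quantity, i.e. on the ``overshoot'' at the stopping time. On $\{\tau<\infty\}$ we have $U_{\tau-1}^2<L\chi_{\tau-1}\le L\chi_N$, and Lemma~\ref{lm:Un_upp_bound} at step $\tau-1$ gives $(U_\tau-U_{\tau-1})^2\le C_{\ref{lm:Un_upp_bound}}\gamma_{\tau-1}^2(L+\norm{e_\tau}^2+\norm{r_\tau}^2+\norm{\tilde r_\tau}^2)$, whence $U_\tau^2\le 2L\chi_N+2(U_\tau-U_{\tau-1})^2$. Decomposing $\1_{\tau<\infty}=\sum_{k\ge N}\1_{\tau=k+1}$ and using $\1_{\tau=k+1}\le\1_{\tau>k}\in\mcF_k$ with $\gamma_k^2\le\chi_k\le\chi_N$ for the $L$-part, and Cauchy--Schwarz against $\bbE_k\norm{e_{k+1}}^4\le C_{e,1}$, $\bbE_k\norm{\tilde r_{k+1}}^4\le C_{\tilde r}$, $\norm{r_{k+1}}^4\le c_r^2$, together with $\sum_{k\ge N}\gamma_k^4\le\chi_N^2$, for the noise part, I expect a bound of the form $\bbE_N[U_\tau^2\1_{\tau<\infty}]\le C_a(L)\chi_N\,p+C_b(L)\chi_N\,p^{1/2}$ (the $L$-dependence of $C_a,C_b$ being harmless since $L$ is fixed). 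Combining with the lower bound and cancelling $\chi_N>0$ gives
\[
C_{\ref{lm:Un_low_bound}}\bigl((1-p)-\varepsilon\bigr)\ \le\ C_a(L)\,p+C_b(L)\,p^{1/2}.
\]
Since $\varepsilon\le 1/2$, the left-hand side stays $\ge\tfrac12 C_{\ref{lm:Un_low_bound}}>0$ as $p\to 0$ while the right-hand side vanishes, so $p$ is bounded below by the unique positive solution $C_{\ref{lm:tarres_lemma1}}(L)$ of $C_a(L)x+C_b(L)x^{1/2}=\tfrac12 C_{\ref{lm:Un_low_bound}}$, which is the asserted bound (and it holds uniformly in $N$).

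I expect the main obstacle to be precisely the upper estimate on $\bbE_N[U_\tau^2\1_{\tau<\infty}]$: one must show it is $o(1)$ as $p\to 0$ (of order $\chi_N p^{1/2}$), not merely $O(\chi_N)$, because a bound of size $C(L)\chi_N$ alone would be absorbed into the $L$-dependence and yield nothing. This forces the careful splitting over the events $\{\tau=k+1\}$ and the use of the uniform fourth-moment bounds of Lemma~\ref{lm:wn}; a secondary technical point is keeping every estimate uniform in the base index $N$, for which only $\gamma_k^2\le\chi_k\le\chi_N$ and the summability bounds (not any monotonicity of $(\gamma_k)$) are available.
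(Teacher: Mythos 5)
Your proposal is correct and follows essentially the same route as the paper's proof: a lower bound on $\bbE_N[U_{n\wedge\tau}^2]$ from the submartingale of Lemma~\ref{lm:Un_low_bound}, with the error term controlled via Cauchy--Schwarz and the smallness of $\max(c_{\tilde r,2},c_r^{1/2})\sqrt{L}$, matched against an upper bound on the overshoot at $\tau$ obtained from Lemma~\ref{lm:Un_upp_bound} together with fourth-moment Cauchy--Schwarz estimates producing the decisive $\chi_N\,p^{1/2}$ term, and finally solving for $p$. The only (immaterial) difference is that you pass to the limit $n\to\infty$ in the lower bound before combining the two estimates, whereas the paper keeps both inequalities at finite $n$ and takes the limit only at the end, which sidesteps the domination argument you invoke.
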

\begin{proof}
Fixing $N, L$, with small notational abuse we will write in this proof $\tau :=
\tau_N(L)$. Notice that for $ N \leq n < \tau$, we have $U_n \leq \sqrt{L
\chi_n} \leq \sqrt{L \chi_N}$. Lemma~\ref{lm:Un_low_bound} shows that
the random process $(Z_n)_{n\geq N}$ defined as
\[
Z_n = U_n^2 - C_{\ref{lm:Un_low_bound}} \sum_{i=N}^{n-1} \gamma_{i}^2
  + 2 \lambda_{\max} \sum_{i=N}^{n-1} \gamma_{i} U_{i} ( \| r_{i+1} \| +
 \| \tilde r_{i+1} \| )
\]
is a submartingale. Thus, the stopped process $(Z_{n\wedge \tau})$ is
a $\mcF_n$--submartingale, and it holds that $\bbE_N[Z_{n \wedge \tau}] \geq Z_{N}$, which implies:
  \begin{equation*}
    \begin{split}
      \bbE_N[U^2_{n \wedge \tau}] \geq U_N^2 +
  C_{\ref{lm:Un_low_bound}} \sum_{i=N}^{n-1} \gamma_{i}^2
   \bbP_N(\tau > n) - 2\lambda_{\max} \sqrt{L \chi_N}
   \bbE_N\left[\sum_{i=N}^{+\infty} \gamma_i (\|r_{i+1}\| +
   \| \tilde r_{i+1}\|) \right] \, .
    \end{split}
  \end{equation*}

By Cauchy-Schwarz's inequality and the bound of
Lemma~\ref{lm:wn}--\ref{sum-r-bnd}), it holds that $\sum_{i=N}^{\infty}
\gamma_i \|r_{i+1}\| \leq \sqrt{\chi_N c_r}$.  Similarly, using
Lemma~\ref{lm:wn}--\ref{En-sum-tr}),  we obtain that
$\bbE_N[\sum_{i=N}^{\infty} \gamma_i \| \tilde r_{i+1}\| ]
\leq c_{\tilde r,2}\sqrt{\chi_N}$.  Therefore,
   \begin{equation*}
     \begin{split}
    \bbE_N[U^2_{n \wedge \tau} - U_N^2] \geq
  &\left(C_{\ref{lm:Un_low_bound}} \sum_{i=N}^{n-1} \gamma_i^2 -
 2 \lambda_{\max} \sqrt{L c_r} \chi_N - 2\lambda_{\max} c_{\tilde r,2}
     \sqrt{L} \chi_N\right) \bbP_N(\tau > n) \\
         &- \left(  2\lambda_{\max} \sqrt{L c_r} \chi_N
    + 2\lambda_{\max} c_{\tilde r,2} \sqrt{L} \chi_N\right)
      \bbP_N(\tau \leq  n)\, .
     \end{split}
   \end{equation*}
Thus, if $\lambda_{\max} \max(c_{\tilde r,2},c_r^{1/2})\sqrt{L} \leq
  C_{\ref{lm:Un_low_bound}}/8$, which can be assumed due to the expression
 of $C_{\ref{lm:Un_low_bound}}$ provided by Lemma~\ref{lm:Un_low_bound}, then,
   \begin{equation}\label{eq:tau_low}
     \begin{split}
   \bbE_N[U^2_{n \wedge \tau} - U_N^2] &\geq  C_{\ref{lm:Un_low_bound}}
  \left( (\chi_{N} - \chi_{n}) - \frac{1}{2} \chi_N \right) \bbP_N(\tau > n)
  - \frac{C_{\ref{lm:Un_low_bound}} \chi_N}{2} \bbP_N(\tau \leq n) \\
             & \geq C_{\ref{lm:Un_low_bound}}
  \left( \frac{\chi_N - 2\chi_{n}}{2}\right) \bbP_N(\tau > n)
     - \frac{C_{\ref{lm:Un_low_bound}} \chi_N}{2} \bbP_N(\tau \leq n) \, .
     \end{split}
   \end{equation}
   On the other hand, if $\tau = N$, then $U_{n \wedge \tau}^2 - U_N^2 = 0$ and if $\tau > N$, then
   \begin{equation}\label{eq:Un_tau_interm}
     \begin{split}
            U_{n \wedge \tau}^2 - U_N^2 &\leq U_{n \wedge \tau}^2 \\
            &\leq U_{n}^2 \1_{\tau > n} +  U_{\tau}^2 \1_{\tau \leq n}  \\
            &\leq L \chi_n \1_{\tau >n } + 2(U_{\tau - 1}^2 + (U_{\tau} - U_{\tau -1})^2) \1_{\tau \leq n} \\
            &\leq L \chi_n \1_{\tau >n } + 2 L\chi_N \1_{\tau \leq n} + (U_{\tau} - U_{\tau -1})^2\1_{\tau \leq n} \, ,
     \end{split}
   \end{equation}
   where the last inequality follows from the fact that
 $U_{\tau - 1} \leq L\chi_{\tau - 1} \leq L \chi_N$.

 Using Lemma~\ref{lm:Un_upp_bound}, we also obtain:
   \begin{equation}\label{eq:Utau_diff}
     \begin{split}
     \bbE_{N}[(U_{\tau} - U_{\tau -1})^2\1_{N < \tau \leq n}] &\leq
  C_{\ref{lm:Un_upp_bound}} \bbE_{N} \left[\sum_{ i= N+1}^{n} \1_{\tau = i}
  \gamma_i^2 \left(L + \norm{e_i}^2 + \norm{r_{i}}^2 + \norm{\tilde r_{i}}^2
    \right)\right] \\
     &\leq  C_{\ref{lm:Un_upp_bound}}  (L+ c_r) \chi_{N} \bbP(\tau \leq n)
   + C_{\ref{lm:Un_upp_bound}} \bbE_{N}\left[\sum_{i=N+1}^{n}\gamma_i^2
   \1_{\tau = i} (\norm{e_{i}}^2  + \norm{\tilde r_{i}}^2)\right]
        \end{split}
   \end{equation}
 By the Cauchy-Schwarz inequality, we obtain that
  $\bbE_{N}[\1_{\tau = i} \norm{\tilde r_{i}}^2] \leq
\bbP_{N}(\tau = i)^{1/2} C_{\tilde r}^{1/2}$ from Lemma~\ref{lm:wn}, and
similarly, $\bbE_{N}[\1_{\tau = i} \norm{e_{i}}^2] \leq C_{e,1}^{1/2}
  \bbP_{N}(\tau = i)^{1/2}$.  Thus, combining
Equations~\eqref{eq:Un_tau_interm} and \eqref{eq:Utau_diff}, we obtain:
   \begin{equation}\label{eq:tau_upp}
     \begin{split}
   \bbE_{N}[U_{n \wedge \tau}^2 - U_N^2 ] &\leq
  L \chi_n \bbP_{N}(\tau > n) + (2L + C_{\ref{lm:Un_upp_bound}} L
  + C_{\ref{lm:Un_upp_bound}} c_r)\chi_{N} \bbP_{N}(\tau \leq n) \\
   &\phantom{=}
   + C_{\ref{lm:Un_upp_bound}} (C_{e,1}^{1/2} + C_{\tilde r}^{1/2})
    \chi_{N} \bbP_{N}(\tau \leq n)^{1/2}\, .
     \end{split}
   \end{equation}

  Finally, combining Equations~\eqref{eq:tau_low} and \eqref{eq:tau_upp}, we
 obtain:
   \begin{equation*}
     \begin{split}
   C_{\ref{lm:Un_low_bound}} \left( \frac{\chi_N - 2\chi_{n}}{2}\right)
  \bbP_N(\tau > n) &\leq L \chi_n \bbP_{N}(\tau > n)
  + (2L + C_{\ref{lm:Un_upp_bound}} L + C_{\ref{lm:Un_upp_bound}} c_r
   + C_{\ref{lm:Un_low_bound}}/2)\chi_{N} \bbP_{N}(\tau \leq n) \\
        &+ C_{\ref{lm:Un_upp_bound}}
  (C_{e,1}^{1/2} + C_{\tilde r}^{1/2}) \chi_{N} \bbP_{N}(\tau \leq n)^{1/2}\, .
     \end{split}
   \end{equation*}
And letting $n$ tend to infinity, we obtain:
\begin{equation*}
  \frac{C_{\ref{lm:Un_low_bound}}}{2} \bbP_{N}(\tau=  \infty)\leq \left( 2L + C_{\ref{lm:Un_upp_bound}} L + C_{\ref{lm:Un_upp_bound}} c_r + C_{\ref{lm:Un_low_bound}}/2 + C_{\ref{lm:Un_upp_bound}} (C_{e,1}^{1/2} + C_{\tilde r}^{1/2})\right) (1-\bbP_{N}(\tau =  \infty))^{1/2} \, .
\end{equation*}
Similarly to \cite[Proof of Lemma 1]{tarrespieges}, this inequality shows the existence of a constant $C_{\ref{lm:tarres_lemma1}}$, that depends only on $L, C_{\ref{lm:Un_low_bound}}, C_{\ref{lm:Un_upp_bound}}, C_{e,1}, c_r, C_{\tilde r}$ such that $\bbP_{N}(\tau(N, L) = \infty) \leq(1- C_{\ref{lm:tarres_lemma1}}(L)) $, which completes the proof.
\end{proof}

The following lemma is an adaptation of \cite[Lemma 2]{tarrespieges} to our setting.

\begin{lemma}\label{lm:tarres_lemma2}
  If $L$ is chosen such that
  \begin{equation*}
    \sqrt{L}\geq  \frac{\lambda_{\max}}{\lambda_{\min}}\left( 2c_r^{1/2} + 16 C_{e,1}^{1/4}+ 16 c_{\tilde r,2} \right) \, ,
  \end{equation*}
  then for all $N \in \bbN$ and $n \geq N$,
\begin{equation*}
  \bbP_n(\liminf U_n >0)\1_{\tau_N(L) = n} \geq \frac{1}{2}\1_{\tau_N(L) = n} \, .
\end{equation*}

\end{lemma}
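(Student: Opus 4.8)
The plan is to iterate the one‑sided increment bound of Lemma~\ref{lm:Un_def} and to show that, once $U_n\ge\sqrt{L\chi_n}$ (which holds on $\{\tau_N(L)=n\}$ since $U_n^2\ge L\chi_n$ there, and $\chi_n>0$), the sequence $U_m$ stays above a fixed positive level for all $m\ge n$ with $\mcF_n$‑conditional probability at least $1/2$; since $\{\tau_N(L)=n\}$ is $\mcF_n$‑measurable, this will give the claim. Fix $N$, $L$ as in the statement and $n\ge N$, and set $\mu:=\lambda_{\max}/\lambda_{\min}$. By Lemma~\ref{lm:Un_def}, for every $m>n$,
\[
U_m-U_n=\sum_{k=n}^{m-1}(U_{k+1}-U_k)\ \ge\ M_m+R_m+\widetilde R_m,
\]
where $M_m:=\sum_{k=n}^{m-1}\gamma_k\scalarp{a_k}{e_{k+1}}$, $R_m:=\sum_{k=n}^{m-1}\gamma_k\scalarp{a_k}{r_{k+1}}$, $\widetilde R_m:=\sum_{k=n}^{m-1}\gamma_k\scalarp{a_k}{\tilde r_{k+1}}$ and $\norm{a_k}\le\mu$.

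First I would control these three sums using only the estimates of Lemma~\ref{lm:wn}. Because $\bbE_k e_{k+1}=0$ (Lemma~\ref{lm:wn}--\ref{Ene=0})) and $a_k$ is $\mcF_k$‑measurable, $(M_m)_{m\ge n}$ is an $(\mcF_m)$‑martingale with $M_n=0$; Lemma~\ref{lm:wn}--\ref{Ene4}) gives $\bbE_n\big[\sum_{k\ge n}\gamma_k^2\scalarp{a_k}{e_{k+1}}^2\big]\le\mu^2 C_{e,1}^{1/2}\chi_n<\infty$, so $(M_m)$ converges a.s.\ and in $L^2$, and Doob's maximal inequality yields $\bbE_n[\sup_{m\ge n}\abs{M_m}]\le(\bbE_n\sup_{m\ge n}M_m^2)^{1/2}\le 2\mu C_{e,1}^{1/4}\chi_n^{1/2}$. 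By Cauchy--Schwarz and Lemma~\ref{lm:wn}--\ref{sum-r-bnd}), $\sup_{m\ge n}\abs{R_m}\le\mu\sum_{k\ge n}\gamma_k\norm{r_{k+1}}\le\mu\sqrt{c_r}\,\chi_n^{1/2}$, a deterministic bound. Finally $\sup_{m\ge n}\abs{\widetilde R_m}\le\mu\sum_{k\ge n}\gamma_k\norm{\tilde r_{k+1}}$, whose conditional expectation is at most $\mu c_{\tilde r,2}\chi_n^{1/2}$ by Lemma~\ref{lm:wn}--\ref{En-sum-tr}). Setting $S:=\sup_{m\ge n}\abs{M_m}+\sup_{m\ge n}\abs{\widetilde R_m}$, this gives $\bbE_n[S]\le\mu(2C_{e,1}^{1/4}+c_{\tilde r,2})\chi_n^{1/2}$.

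The hypothesis $\sqrt L\ge\mu(2c_r^{1/2}+16C_{e,1}^{1/4}+16c_{\tilde r,2})$ is exactly what makes the bookkeeping close: it gives $2\mu c_r^{1/2}\le\sqrt L$, hence $\sup_{m\ge n}\abs{R_m}\le\tfrac12\sqrt{L\chi_n}$, and it gives $\sqrt L\ge16\mu(C_{e,1}^{1/4}+c_{\tilde r,2})$, so by Markov's inequality
\[
\bbP_n\big(S\ge\tfrac14\sqrt{L\chi_n}\big)\ \le\ \frac{4\mu(2C_{e,1}^{1/4}+c_{\tilde r,2})}{\sqrt L}\ \le\ \frac{2C_{e,1}^{1/4}+c_{\tilde r,2}}{4(C_{e,1}^{1/4}+c_{\tilde r,2})}\ \le\ \frac12.
\]
On $\{\tau_N(L)=n\}\cap\{S<\tfrac14\sqrt{L\chi_n}\}$ we get, for every $m\ge n$,
\[
U_m\ \ge\ U_n-\abs{M_m}-\abs{R_m}-\abs{\widetilde R_m}\ \ge\ \sqrt{L\chi_n}-\tfrac14\sqrt{L\chi_n}-\tfrac12\sqrt{L\chi_n}\ =\ \tfrac14\sqrt{L\chi_n}\ >\ 0,
\]
so $\liminf_m U_m\ge\tfrac14\sqrt{L\chi_n}>0$ there. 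Hence $\1_{\tau_N(L)=n}\1_{\{\liminf U_m>0\}}\ge\1_{\tau_N(L)=n}\1_{\{S<\frac14\sqrt{L\chi_n}\}}$, and applying $\bbE_n[\cdot]$ yields $\bbP_n(\liminf U_n>0)\1_{\tau_N(L)=n}\ge(1-\tfrac12)\1_{\tau_N(L)=n}=\tfrac12\1_{\tau_N(L)=n}$. I expect the only delicate point to be precisely this constant calibration (which dictates the coefficients $2$ and $16$ in the hypothesis), together with keeping the $L^1$‑ versus $L^2$‑maximal inequalities and the nested conditional expectations straight; no idea beyond Lemmas~\ref{lm:Un_def} and~\ref{lm:wn} enters, which is why — in contrast with Lemma~\ref{lm:tarres_lemma1} — this argument does not invoke the drift lower bound of Lemma~\ref{lm:Un_low_bound}.
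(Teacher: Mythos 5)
Your proof is correct and follows essentially the same route as the paper: decompose $U_m-U_n$ via Lemma~\ref{lm:Un_def} into the martingale part, the $r$-part (bounded deterministically by Cauchy--Schwarz and $\sum\|r_{i+1}\|^2\le c_r$), and the $\tilde r$-part, then control the suprema with Doob's maximal inequality and Lemma~\ref{lm:wn}--\ref{En-sum-tr}), and conclude by Markov on the $\mcF_n$-measurable event $[\tau_N(L)=n]$. The only cosmetic difference is that the paper introduces a threshold $b$ and union-bounds two separate one-sided ``bad'' events ($L^2$-Chebyshev for the martingale, $L^1$-Markov for the $\tilde r$-sum), whereas you lump both suprema into a single variable $S$ and apply Markov once to $\bbE_n[S]$; the calibration of constants against the hypothesis on $\sqrt L$ comes out the same.
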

\begin{proof}
The idea of the proof is to use Lemma~\ref{lm:Un_def} and to show that
there is some $0 <b < \sqrt{L}/2$ such that, with probability at least $1/2$,
the norm of $\sum_{i=n}^{\infty}\gamma_n \scalarp{a_i}{e_{i+1} + r_{i+1}
+ \tilde r_{i+1}}$ will be less that $b \sqrt{\chi_n}$. Since, when
$\tau_{N}(L) = n$, it holds that $U_{n} \geq \sqrt{L}$, this will show that
$\liminf U_n >0$ at least with probability 1/2.  To simplify the notations we
will denote in this proof
$$
E_{n+1}, {R}_{n+1}, \tilde{R}_{n+1} =
\scalarp{a_n}{e_{n+1}}, \scalarp{a_n}{r_{n+1}}, \scalarp{a_n}{\tilde r_{n+1}}
  \, .
$$

Fix $N \in \bbN$, $n \geq N$ and notice that by Doob's inequality:
\begin{equation*}
  \bbE_{n}\left[ \sup_{k \geq n}\left|\sum_{i=n}^{k} \gamma_i E_{i+1}\right|^2 \right] \leq 4 \bbE_{n}\left[ \sum_{i=n}^{\infty} \gamma_i^2 E_{i+1}^2\right] \leq 4 \frac{\lambda_{\max}^2}{\lambda_{\min}^2}C_{e,1}^{1/2} \chi_n \, ,
\end{equation*}
where we have used the fact that $\bbE_n[\norm{e_{n+1}}^4] \leq C_{e,1}$ and $\lambda_{\min}\norm{a_n} \leq \lambda_{\max}$.
Moreover,
\begin{equation*}
  \bbE_n \left[ \sup_{k \geq n}
  \left|\sum_{i=n}^{k} \gamma_i \tilde{R}_{i+1}\right|\right]\leq
\norm{a_n}\bbE_n\left[\sum_{i=n}^{\infty} \gamma_i \norm{\tilde r_{i+1}}
  \right]
  \leq \frac{ c_{\tilde r,2} \lambda_{\max}}{\lambda_{\min}} \sqrt{\chi_n} \, .
\end{equation*}
Therefore, for any constant $b >0$, applying Markov's inequality, we obtain:
\begin{equation*}
  \bbP_{n}\left( \inf_{k \geq n}\sum_{i=n}^{k} \gamma_i E_{i+1}
  < - b \sqrt{\chi_n} \right)
 \leq  \bbP_{n} \left( \sup_{k \geq n}
 \left|\sum_{i=n}^{k} \gamma_i E_{i+1}\right|^2 \geq b^2 \chi_{n}\right)
 \leq \frac{ 4 \lambda_{\max}^2C_{e,1}^{1/2}}{\lambda_{\min}^2b^2} \, ,
\end{equation*}
and
\begin{equation*}
  \bbP_{n} \left( \inf_{k \geq n} \sum_{i=n}^{k} \gamma_i \tilde{R}_{i+1} < - b \sqrt{\chi_n} \right) \leq \bbP_{n}\left(\left|\sup_{k \geq n} \sum_{i=n}^{k}\gamma_i \tilde{R}_{i+1}\right| \geq b \sqrt{\chi_n} \right) \leq \frac{c_{\tilde r,2}\lambda_{\max} }{\lambda_{\min}b} \, .
\end{equation*}

Thus, denoting $\Gamma_1$ the event:
\begin{equation*}
  \Gamma_1 := \left[ \inf_{k \geq n} \sum_{i=n}^{k} \gamma_i \tilde{R}_{i+1} < - b \sqrt{\chi_n}\right] \bigcup \left[ \inf_{k \geq n}\sum_{i=n}^{k} \gamma_i E_{i+1} < - b \sqrt{\chi_n}\right]\,
\end{equation*}
and fixing $b = 4  \lambda_{\max}/\lambda_{\min} \left(C_{e,1}^{1/4} + c_{\tilde r,2} \right)$, we obtain
\begin{equation*}
  \bbP_{n}(\Gamma_1 ) \leq \frac{ 4 \lambda_{\max}^2C_{e,1}^{1/2}}{\lambda_{\min}^2b^2} + \frac{ c_{\tilde r,2}\lambda_{\max} }{\lambda_{\min}b} \leq \frac{1}{2} \, .
\end{equation*}
Moreover, for such $b$, using Lemma~\ref{lm:Un_def}, we obtain for $k \geq n$,
\begin{equation*}
  (U_{k} - U_n)\1_{\Gamma_1^c} \geq \1_{\Gamma_1^c}\sum_{i=n}^k \gamma_i (E_{i+1} + R_{i+1} + \tilde{R}_{i+1}) \geq
 \1_{\Gamma_1^c}\sum_{i=n}^{k} \gamma_i R_{i+1}
  - 2 b \sqrt{\chi_n} \1_{\Gamma_1^c} \, .
\end{equation*}
Furthermore, by the Cauchy-Schwarz inequality, it also holds that:
\begin{equation*}
  \sup_{k \geq n}\left|\sum_{i=n}^k \gamma_i R_{i+1}\right| \leq \frac{\lambda_{\max}}{\lambda_{\min}}\sum_{i=n}^{\infty} \gamma_i \norm{r_{i+1}} \leq \frac{\lambda_{\max}}{\lambda_{\min}}\chi_{n}^{1/2} c_r^{1/2} \, ,
\end{equation*}
and finally, for $k \geq n$, we obtain,
\begin{equation*}
  U_{k} \1_{\Gamma_1^c} \geq U_n \1_{\Gamma_1^c} -(c_r^{1/2}\lambda_{\max}/\lambda_{\min}  + 2 b)  \sqrt{\chi_n} \1_{\Gamma_1^c} \, .
\end{equation*}
In Particular, if $\sqrt{L} \geq 2 c_r^{1/2} \lambda_{\max}/\lambda_{\min}+ 4b$, then on the event $\Gamma_1^c \cap [\tau_{N}(L) = n]$ it holds
\begin{equation*}
  \begin{split}
      U_{k}&\geq  U_n - \sqrt{L}/2  \sqrt{\chi_n} \geq \left( \sqrt{L} - \sqrt{L}/{2}\right) \sqrt{\chi_n} \geq \sqrt{L}\sqrt{\chi_n}/2 \, .
  \end{split}
\end{equation*}
This shows that $\Gamma_1^c \cap [\tau_{N}(L) = n] \subset [\liminf U_n >0] \cap [\tau_{N}(L) = n]$. Thus,
\begin{equation*}
  \begin{split}
     \bbP_n([\liminf U_n >0] )\1_{\tau_{N}(L) = n} &\geq \bbP_n(\Gamma_1^c \cap [\tau_{N}(L) = n] ) \\
     &= \bbP_n(\Gamma_1^c) \1_{\tau_{N}(L) = n} \\
     &\geq \frac{1}{2} \1_{\tau_{N}(L) = n} \, ,
  \end{split}
\end{equation*}
which completes the proof.
\end{proof}

We are now in position to complete the proof of Proposition~\ref{prop:duflo}
by proving that $\bbP(U_n\to 0) = 0$. This will be the content of the
following lemma.

\begin{lemma}\label{lm:duf_end_proof}
\label{Un->0} $\bbP(U_n\to 0) = 0$.
\end{lemma}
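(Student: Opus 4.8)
The plan is to combine the two preceding lemmas through a tower-property argument and then boost the resulting uniform positive conditional lower bound into an almost-sure statement via L\'evy's zero--one law; this is the classical Brandi\`ere--Duflo/Tarr\`es scheme.

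First I would fix the threshold $L$. Since Lemma~\ref{duflo-fort} lets us take the constants $c_r$ and $c_{\tilde r,2}$ as small as needed, I would pick $L$ large enough that the hypothesis of Lemma~\ref{lm:tarres_lemma2} holds whenever $c_r,c_{\tilde r,2}\le 1$ (for instance $\sqrt L=\frac{\lambda_{\max}}{\lambda_{\min}}\left(2+16C_{e,1}^{1/4}+16\right)$, a quantity not involving $c_r$ or $c_{\tilde r,2}$), and then shrink $c_r$ and $c_{\tilde r,2}$ so that $\max(c_{\tilde r,2},c_r^{1/2})\sqrt L$ becomes small enough for Lemma~\ref{lm:tarres_lemma1} to apply. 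With such an $L$ fixed I write $\tau_N:=\tau_N(L)$ and record that $[\tau_N=n]\in\mcF_n$ for every $n\ge N$, since each $U_k$ is $\mcF_k$-measurable and $\chi_k$ is deterministic; in particular $\tau_N$ is a stopping time.

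Next, setting $C:=[\liminf_n U_n>0]$, the key step is to show that $\bbE[\1_C\mid\mcF_N]\ge c_0$ for every $N$, where $c_0:=\tfrac12 C_{\ref{lm:tarres_lemma1}}(L)>0$. Decomposing $[\tau_N<\infty]$ into the disjoint events $[\tau_N=n]$, $n\ge N$, applying the tower property, and using Lemma~\ref{lm:tarres_lemma2} in the form $\1_{\tau_N=n}\,\bbP_n(C)\ge\tfrac12\1_{\tau_N=n}$ a.s., one obtains
\begin{align*}
\bbP_N(C)
&\ge\sum_{n=N}^{\infty}\bbE_N\!\left[\1_{\tau_N=n}\1_C\right]
=\sum_{n=N}^{\infty}\bbE_N\!\left[\1_{\tau_N=n}\,\bbP_n(C)\right]\\
&\ge\tfrac12\sum_{n=N}^{\infty}\bbE_N\!\left[\1_{\tau_N=n}\right]
=\tfrac12\,\bbP_N(\tau_N<\infty)\ge\tfrac12\,C_{\ref{lm:tarres_lemma1}}(L),
\end{align*}
the final inequality being Lemma~\ref{lm:tarres_lemma1}.

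Finally, since $C\in\mcF_\infty=\sigma\!\left(\bigcup_N\mcF_N\right)$, L\'evy's zero--one law gives $\bbE[\1_C\mid\mcF_N]\to\1_C$ almost surely as $N\to\infty$; combined with the uniform bound just established, this forces $\1_C\ge c_0>0$, hence $\1_C=1$, almost surely, i.e. $\liminf_n U_n>0$ almost surely. Since $[U_n\to 0]\subset[\liminf_n U_n=0]=C^{c}$, we conclude that $\bbP(U_n\to 0)=0$; combined with the inclusions $[y_n\to 0]\subset[w_n^-\to 0]\subset[U_n\to 0]$ and with the reduction of Lemma~\ref{duflo-fort}, this also completes the proof of Proposition~\ref{prop:duflo}. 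I expect the only genuinely delicate point to be the simultaneous calibration of $L$ together with the small constants $c_r,c_{\tilde r,2}$, which pull $L$ in opposite directions; the remainder is the routine tower-property/backward-martingale boosting.
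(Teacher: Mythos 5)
Your proposal is correct and follows essentially the same route as the paper's proof: decompose $[\tau_N(L)<\infty]$ into the events $[\tau_N(L)=n]$, apply the tower property together with Lemmas~\ref{lm:tarres_lemma1} and~\ref{lm:tarres_lemma2} to get the uniform lower bound $\bbP_N(\liminf U_n>0)\ge\tfrac12 C_{\ref{lm:tarres_lemma1}}(L)$, and conclude by L\'evy's zero--one law. Your explicit calibration of $L$ against the small constants $c_r,c_{\tilde r,2}$ is slightly more careful than the paper's one-line remark, but it is the same argument.
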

\begin{proof}
We proceed as in \cite{tarrespieges}. Indeed, we can always choose $c_r,
c_{\tilde r,2}$ and $L$ respectively small and large enough such that the
prerequisites of Lemmas~\ref{lm:tarres_lemma1} and \ref{lm:tarres_lemma2} are
satisfied. Thus, by Lemma~\ref{lm:tarres_lemma1}, there is
$C_{\ref{lm:tarres_lemma1}}(L)$ such that $\bbP(\tau_{N}(L) < +\infty) \geq
C_{\ref{lm:tarres_lemma1}}(L) >0 $. Then, applying
Lemma~\ref{lm:tarres_lemma2}, we obtain:
\begin{equation*}
  \bbE_{N}[\1_{\liminf U_n >0}] \geq \sum_{i=N}^{\infty} \bbE_{N}[ \1_{\liminf U_n > 0} \1_{\tau_{N}(L) = i}] \geq \frac{1}{2} \sum_{i=N}^{\infty} \bbE_{N}[\1_{\tau_{N}(L)} = i] = \frac{C_{\ref{lm:tarres_lemma1}}(L)}{2} \, .
\end{equation*}
Since $[\liminf U_n >0] \in \mcF_{\infty} = \sigma(\bigcup_{i=0}^{\infty}
\mcF_i)$, we know by L\'evy's zero-one law (see e.g. \cite[Theorem 14.2]{Wil91})
that $\lim_{N \rightarrow \infty} \bbE_{N}[\1_{\liminf U_n >0}] =\1_{\liminf
U_n >0}$ almost surely. Thus, almost surely, $\1_{\liminf U_n >0} = 1$, which
shows that $\bbP([U_n \rightarrow 0]) = 0$.
\end{proof}

\section*{Acknowledgments}

We would like to thank the anonymous reviewers for their outstanding job of refereeing, and in particular of pointing out two serious flaws in the first version of this work. The work of Sholom Schechtman was supported by the “R\'egion Ile-de-France”.
 \bibliographystyle{plain}
 \bibliography{math}

\begin{appendices}

\section{Definability and Whitney Stratifications}

\subsection{o-minimality}
\label{sec:o-min}

An o-minimal structure can be viewed as an axiomatization of diverse
properties of semialgebraic sets.
In an o-minimal
structure, pathological sets such as Peano curves or the graph of the
function $\sin \frac{1}{x}$ do not exist.
To our knowledge the first
work to link ideas between optimization and o-minimal structures was
\cite{bolte2007clarke}, where the authors analyzed the structure of the
Clarke subdifferential of a definable function and extended the
Kurdyka-\L{}ojasiewicz inequality \cite{kur_ongrad} to the nonsmooth
setting. Nowadays a rich body of literature enforces this link, see
e.g. \cite{dav-dru-kak-lee-19, lewis_small, bol_dan_lew09,
 attouch_semi, bolte2019conservative}. A nice exposure about
usefulness of o-minimal theory in optimization is \cite{iof08}.
Results on the Verdier and Whitney stratification of definable sets
can be found in \cite{cos02, van96, loi98}.

 An \emph{o-minimal structure} is a family $\cO = (\cO_n)_{n \in \bbN*}$, where $\cO_n$ is a set of subsets of $\bbR^n$, verifying the following axioms.

 \begin{enumerate}
   \item If $Q : \bbR^n \rightarrow \bbR$ is a polynomial, then $\{Q(x) = 0\} \in \cO_n$.
   \item If $A$ and $B$ are in $\cO_n$, then the same is true for $A \cap B$, $A \cup B$ and $\bbR^n \backslash A$.
   \item If $A \in \cO_n$ and $B \in \cO_m$, then $A \times B \in \cO_{n+m}$.
   \item If $A \in \cO_n$, then the projection of $A$ on its first ($n-1$) coordinates is in $\cO_{n-1}$.
   \item Every element of $\cO_1$ is exactly a finite union of intervals and points.
 \end{enumerate}
 Sets contained in $\cO$ are called \emph{definable}. We call a map
 $f : \bbR^k \rightarrow \bbR^m$ definable if its graph is definable.
 Definable sets and maps have remarkable stability
 properties, for instance, if $f$ and $A$ are definable, then $f(A)$
 and $f^{-1}(A)$, any composition of two functions definable in the
 same o-minimal structure is definable, and many others. These properties show that most of the functions that are used in optimization are definable. Examples of such are: semialgebraic functions, analytic functions restricted to a semialgebraic compact, exponential and logarithm (see e.g. \cite{bol_dan_lew09, bier_semi_sub, wil_o_min}). In particular, it can be shown that the loss of a neural network is a
 definable function \cite{dav-dru-kak-lee-19}.

 \subsection{Whitney Stratification}
 \label{sec:whitney}

 From the definition of $\bsd_a$ in Equation~(\ref{eq:angle_vect}), one can define the following distance between two vector spaces $E_1, E_2$:
 \begin{equation}
   \bsd(E_1,E_2) = \max \{ \bsd_a(E_1, E_2), \bsd_a(E_2,E_1)\} \, .
 \end{equation}
 \begin{definition}
 We say that a $C^p$ stratification $(S_i)$ satisfies a \emph{Whitney-(a) property}, if for every couple of distinct strata $S_i, S_j$, for each $y \in S_i \cap \overline{S_j}$ and for each sequence $(x_n)$ in $S_j$ such that $x_n \rightarrow y$, it holds:
 \begin{equation}
 \textrm{\emph{w-(a)}} \quad{} \textrm{ There is $E \subset \bbR^d$ such that } \quad \bsd(\cT_{x_n}S_j,E) \rightarrow 0 \implies \cT_{y}S_i \subset E \, .
\end{equation}
 We will refer to $(S_i)$ as a Whitney $C^p$ stratification.
\end{definition}
 It is known (see \cite{cos02,van96}) that every definable function
 $f$ admits a Whitney $C^p$ (for any $p$) stratification $(X_i)$ of
 its domain such that $f$ is $C^p$ on each stratum. The following
 ``projection formula'' relates the Clarke subdifferential
 $\partial f(y)$ of $f$ at $y$, to $\nabla_{X_i} f(y)$.

 \begin{lemma}[Projection formula, {\cite[Lemma 8]{bolte2007clarke}}]\label{lm:f_bol_strat}
   Let $f: \bbR^d \rightarrow \bbR$ be a locally Lipschitz, definable
   function and $p$ a positive integer. There is $(S_i)$, a definable
   Whitney $C^p$ stratification of $\graph(f)$, such that if one denotes
   by $X_i$ the projection of $S_i$ onto its first $d$
   coordinates, the restriction $f \colon X_i \rightarrow \bbR$ is $C^p$ and the
   family $(X_i)$ is a Whitney $C^p$ stratification of $\bbR^d$.
   Moreover, for any $y \in X_i$ and $v \in \partial f(y)$, we have
   $P_{\cT_y X_i}(v) = \nabla_{X_i} f(y)$.
 \end{lemma}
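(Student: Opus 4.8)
The plan is to prove both halves of the statement — existence of the bi-stratification $(S_i)\rightsquigarrow(X_i)$, and the projection identity — by exploiting that $\graph(f)$ is a closed, uniformly \emph{non-vertical} definable set because $f$ is continuous and locally Lipschitz. First I would apply the o-minimal stratification theorem in $\bbR^{d+1}$ (the Whitney — or, a fortiori, the Verdier — version, cf.\ Proposition~\ref{prop:Verdier-existe}) to get a $C^p$ Whitney stratification of $\bbR^{d+1}$ compatible with the definable set $\graph(f)$, and let $(S_i)$ be the finitely many strata contained in $\graph(f)$; they partition $\graph(f)$. Writing $\pi\colon\bbR^{d+1}\to\bbR^d$ for the projection onto the first $d$ coordinates and $X_i:=\pi(S_i)$, the map $\pi|_{\graph(f)}$ is a homeomorphism onto $\bbR^d$ (inverse $x\mapsto(x,f(x))$), so the $X_i$ partition $\bbR^d$ and $\pi|_{S_i}\colon S_i\to X_i$ is a homeomorphism. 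Local Lipschitzness forces every tangent space $\cT_zS_i$ to contain no nonzero vertical vector $(0,t)$ — a $C^1$ arc $s\mapsto(c_1(s),f(c_1(s)))$ in $S_i$ with $\dot c_1(0)=0$ has $c_2(s)-c_2(0)=O(\|c_1(s)-c_1(0)\|)=o(s)$ — so $\pi|_{S_i}$ is an injective $C^p$ immersion that is a homeomorphism onto its image; hence $X_i$ is an embedded $C^p$ submanifold and $f|_{X_i}=\mathrm{pr}_{d+1}\circ(\pi|_{S_i})^{-1}$ is $C^p$.

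The next step records the geometric fact carrying the whole argument: for $x\in X_i$ one has $\cT_{(x,f(x))}S_i=\{(h,\scalarp{\nabla_{X_i}f(x)}{h}):h\in\cT_xX_i\}$, i.e.\ every such tangent space is the graph of a linear functional of norm at most a local Lipschitz constant $L$. The assignment $(V,w)\mapsto\{(h,\scalarp{w}{h}):h\in V\}$, with $w\in V$, is continuous on the Grassmannian, and conversely $\pi$ acts continuously on the open set of non-vertical subspaces; this is exactly what is needed to push the frontier and Whitney-(a) conditions down from $(S_i)$ to $(X_i)$. Explicitly: given $y\in X_i\cap\overline{X_j}$, a sequence $x_n\in X_j$ with $x_n\to y$ and $\cT_{x_n}X_j\to E$, the vectors $\nabla_{X_j}f(x_n)$ lie in $\cT_{x_n}X_j$ and are bounded by $L$, so after extraction $\nabla_{X_j}f(x_n)\to w\in E$; therefore $\cT_{(x_n,f(x_n))}S_j\to\widetilde E:=\{(h,\scalarp{w}{h}):h\in E\}$, and Whitney-(a) for $(S_i)$ yields $\cT_{(y,f(y))}S_i\subseteq\widetilde E$. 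Comparing first components and functionals gives simultaneously $\cT_yX_i\subseteq E$ (so $(X_i)$ satisfies Whitney-(a)) and the bonus identity $P_{\cT_yX_i}w=\nabla_{X_i}f(y)$. The frontier condition for $(X_i)$ transfers directly via the homeomorphism $\pi|_{\graph(f)}$.

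For the projection identity, fix $y\in X_i$. Since $P_{\cT_yX_i}$ is linear and continuous and $\partial f(y)$ is by definition the closed convex hull of limits $v=\lim_n\nabla f(x_n)$ with $x_n\in\mathrm{Reg}(f)$, $x_n\to y$, it suffices to show $P_{\cT_yX_i}(v)=\nabla_{X_i}f(y)$ for each such $v$. Passing to a subsequence, all $x_n$ lie in a single stratum $X_j$ and $y\in\overline{X_j}$. If $X_j=X_i$, then $x_n\in\mathrm{Reg}(f)\cap X_i$ and, running the chain rule along $C^1$ curves in $X_i$ for $f$ at $x_n$ and for a local $C^p$ representative $F$ of $f|_{X_i}$, one gets $P_{\cT_{x_n}X_i}\nabla f(x_n)=\nabla_{X_i}f(x_n)$; letting $n\to\infty$ gives the claim. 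If $X_j\ne X_i$, the same identity applied on $X_j$ gives $\nabla_{X_j}f(x_n)=P_{\cT_{x_n}X_j}\nabla f(x_n)\to P_Ev$ (after extracting $\cT_{x_n}X_j\to E$), so $w=P_Ev$ in the notation above; the previous paragraph then supplies $\cT_yX_i\subseteq E$ and $P_{\cT_yX_i}w=\nabla_{X_i}f(y)$, whence $P_{\cT_yX_i}v=P_{\cT_yX_i}P_Ev=P_{\cT_yX_i}w=\nabla_{X_i}f(y)$.

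The crux — and the only place genuine geometry is used — is the middle step: projecting a Whitney stratification generally destroys the Whitney property, and what rescues us is precisely that $\graph(f)$ projects bi-Lipschitzly, with uniformly non-vertical stratum tangent spaces. Once that is established the projection formula itself is bookkeeping, and the same computation that proves Whitney-(a) for $(X_i)$ hands us the identity $P_{\cT_yX_i}(\lim\nabla_{X_j}f)=\nabla_{X_i}f(y)$ for free, which is exactly the qualitative statement that Theorem~\ref{th:f_ver_strat} later upgrades to a Lipschitz estimate under the Verdier condition. A subsidiary point to check en route is the representation of $\partial f(y)$ as the closed convex hull of limit gradients, but this is the very definition adopted in Section~\ref{sec-prel}.
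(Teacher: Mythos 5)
Your proof is correct, and it is essentially the argument of the cited reference \cite{bolte2007clarke} (the paper itself states Lemma~\ref{lm:f_bol_strat} without proof): stratify $\graph(f)$, use local Lipschitzness to rule out vertical tangent vectors so the projection of each stratum is an embedded $C^p$ manifold on which $f$ is $C^p$, identify $\cT_{(x,f(x))}S_j$ with the graph of $h\mapsto\scalarp{\nabla_{X_j}f(x)}{h}$, and combine Whitney-(a) upstairs with the a.e.\ chain-rule identity $P_{\cT_{x}X_j}\nabla f(x)=\nabla_{X_j}f(x)$ and linearity of $P_{\cT_yX_i}$ over the closed convex hull. The tangent-space identification you single out as the crux is exactly the claim the paper re-derives at the start of its proof of Theorem~\ref{th:f_ver_strat}, so your route is fully consistent with the paper's machinery.
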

 Lemma~\ref{lm:f_bol_strat} has important consequences. One of them (see \cite[Section 5]{dav-dru-kak-lee-19})
 is that every locally Lipschitz continuous and definable function is path-differentiable.

 \begin{lemma}[{\cite[Theorem 5.8]{dav-dru-kak-lee-19}}]
   \label{lem:whitney-implest-pathdiff}
   Let $f: \bbR^d \rightarrow \bbR$ be a locally Lipschitz continuous function. If $\graph(f)$ admits a Whitney $C^1$ stratification, then $f$ is path-differentiable.
 \end{lemma}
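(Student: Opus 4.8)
The plan is to reduce path-differentiability of $f$ to a tangency property of curves inside the stratified graph. Let $(S_i)$ be the given Whitney $C^1$ stratification of $\graph(f)\subset\bbR^{d+1}$, and let $c:(0,1)\to\bbR^d$ be absolutely continuous. Since an absolutely continuous map on a bounded interval has bounded variation, the image of $c$ lies in a compact set on which $f$ is Lipschitz; hence $f\circ c$ is absolutely continuous and the lift $\gamma(t):=(c(t),f(c(t)))$ is an absolutely continuous curve taking values in $\graph(f)$, differentiable for a.e.\ $t$ with $\dot\gamma(t)=(\dot c(t),(f\circ c)'(t))$. It then suffices to establish, for a.e.\ $t$, the two statements: (i) writing $S_{i(t)}$ for the stratum containing $\gamma(t)$, one has $\dot\gamma(t)\in\cT_{\gamma(t)}S_{i(t)}$; and (ii) $\cT_{\gamma(t)}S_{i(t)}\subset V_v:=\{(h,\scalarp{v}{h}):h\in\bbR^d\}$ for every $v\in\partial f(c(t))$. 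Indeed, combining (i) and (ii) and comparing last coordinates yields $(f\circ c)'(t)=\scalarp{v}{\dot c(t)}$ for every $v\in\partial f(c(t))$, which is exactly path-differentiability.

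For (i), set $A_i=\{t\in(0,1):\gamma(t)\in S_i\}$; these are measurable, they partition $(0,1)$, and only finitely many are nonempty because the compact image of $\gamma$ meets only finitely many strata. Fix $t$ that is simultaneously a density point of $A_{i(t)}$ and a point of differentiability of $\gamma$; by the Lebesgue density theorem this excludes only a null set. Choosing $t_n\to t$ with $t_n\in A_{i(t)}$, write $S_{i(t)}$ locally near $\gamma(t)$ as $\phi^{-1}(0)$ for a $C^1$ submersion $\phi$ with $\ker J_\phi(\gamma(t))=\cT_{\gamma(t)}S_{i(t)}$. From $\phi(\gamma(t_n))=\phi(\gamma(t))=0$ and a first-order Taylor expansion, dividing by $t_n-t$ and using $(\gamma(t_n)-\gamma(t))/(t_n-t)\to\dot\gamma(t)$ together with the fact that the remainder is $o(\|\gamma(t_n)-\gamma(t)\|)=o(|t_n-t|)$, we obtain $J_\phi(\gamma(t))\dot\gamma(t)=0$, i.e.\ $\dot\gamma(t)\in\cT_{\gamma(t)}S_{i(t)}$.

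For (ii), use the definition $\partial f(y)=\overline{\conv}\{v:\exists\,(y_n)\in\text{Reg}(f)^{\bbN},\ (y_n,\nabla f(y_n))\to(y,v)\}$ at $y=c(t)$. Fix such a $v$ and write $(y,f(y))\in S_i$. Each $(y_n,f(y_n))$ lies in some stratum $S_{k_n}$; passing to a subsequence $k_n\equiv k$, so $(y,f(y))\in S_i\cap\overline{S_k}$, and (passing to a further subsequence) $\cT_{(y_n,f(y_n))}S_k$ converges in the Grassmannian to a subspace $E$. Since $S_k\subset\graph(f)$ and $f$ is differentiable at $y_n$, every velocity of a $C^1$ curve in $S_k$ through $(y_n,f(y_n))$ has last coordinate $\scalarp{\nabla f(y_n)}{\cdot}$ of its first block, so $\cT_{(y_n,f(y_n))}S_k\subset V_{\nabla f(y_n)}$; letting $n\to\infty$ and using that $V_w$ depends continuously on $w$ gives $E\subset V_v$. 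If $k\neq i$, the Whitney-(a) property yields $\cT_{(y,f(y))}S_i\subset E\subset V_v$; if $k=i$, the same conclusion follows from continuity of the tangent spaces of the $C^1$ manifold $S_i$ along $(y_n,f(y_n))\to(y,f(y))$. Finally, the set of $w$ with $\cT_{(y,f(y))}S_i\subset V_w$ is cut out by a linear system in $w$, hence closed and convex, so the inclusion survives the passage to $\overline{\conv}$; thus $\cT_{(y,f(y))}S_i\subset V_v$ for all $v\in\partial f(y)$, proving (ii).

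The step I expect to be the main obstacle is (ii): making the Grassmannian convergence arguments precise (extracting the subsequence along which $\cT_{(y_n,f(y_n))}S_k$ converges, justifying $\cT_{(y_n,f(y_n))}S_k\subset V_{\nabla f(y_n)}$ from the fact that $S_k$ lies in the graph and $f$ is differentiable at the base point), and then the reduction from extreme limits of gradients to arbitrary elements of $\partial f(y)$ via closedness and convexity of $\{w:\cT_{(y,f(y))}S_i\subset V_w\}$. The density-point argument for (i) is routine, but one must be careful that the exceptional null set does not depend on the stratum index, which is why it is organized stratum by stratum using only finitely many sets $A_i$.
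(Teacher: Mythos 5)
Your proof is correct. The paper itself does not prove this lemma (it is quoted from Davis et al.\ and presented there as a consequence of the projection formula, Lemma~\ref{lm:f_bol_strat}); your argument is a sound, self-contained rendering of that standard proof, with the density-point argument giving $\dot\gamma(t)\in\cT_{\gamma(t)}S_{i(t)}$ a.e.\ and the Whitney-(a) condition (plus closedness and convexity of $\{w:\cT_{(y,f(y))}S_i\subset V_w\}$) re-deriving the projection-formula-type inclusion $\cT_{(y,f(y))}S_i\subset V_v$ for all $v\in\partial f(y)$ rather than citing it.
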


   \section{Probabilistic Arguments and Proof of Lemma~\ref{duflo-fort}}\label{app:coupl}

  The arguments to prove Lemma~\ref{duflo-fort} are similar to the ones presented in \cite{bra-duf-96, pem-90, tarrespieges}. For future references we have found convenient to present them for a general sequence $(w_n)$, which might not be explicitly related to $(y_n)$.

 Let $d$ be an integer, $(\Omega, \mcF, \bbP)$ be a probability space, $(\mcF_n)$ a filtration on it and $(w_n)$ be a sequence in $\bbR^{d}$  verifying:
 \begin{equation}\label{eq:coupl}
 w_{n+1} = w_n + \gamma_n D_n (w_n) w_n + \gamma_n r_{1,n+1} + \gamma_n r_{2, n+1} + \gamma_n e_{n+1}\, ,
 \end{equation}
 where $(\gamma_n)$ is a real-valued sequence, $(r_{1,n}), (r_{2,n}), (e_{n})$ are $\bbR^d$-valued and adapted to $(\mcF_n)$ and for each $n \in \bbN$, $D_n : \bbR^d \rightarrow \bbR^{d \times d}$ is some measurable map. Furthermore, assume that $w_0$ is $\mcF_0$-measurable.

 Fix a deterministic, real-valued sequence $(\chi_{n})$, four measurable functions $G_1, G_2, G_3, G_4: \bbR^d \rightarrow \bbR_{+}$, an event $\Gamma \in \mcF$ and a measurable function $D: \bbR^d \rightarrow \bbR^d$.
 Assume that we want to prove the fact that $\bbP( \Gamma \cap [w_n \rightarrow 0]) = 0$ under the following set of assumptions.

 \begin{assumption}\label{hyp:hard_coupl}
   On the event $\Gamma \cap [w_n \rightarrow 0]$, the following holds.
   \begin{enumerate}[i)]
     \item\label{hard_coupl_zermean}For all $n \in \bbN$, $\bbE_n e_{n+1} = 0$ and
     \begin{equation*}
       \limsup \bbE_n G_1(e_{n+1}) < + \infty \, , \quad   \liminf \bbE_n G_2(e_{n+1}) > 0\, .
     \end{equation*}
     \item\label{hard_coupl_r1}
     \begin{equation*}
       \sum_{i=0}^{\infty}\norm{r_{1,i+1}}^2 < + \infty  \, .
     \end{equation*}
     \item\label{hard_coupl_r2}
     \begin{equation*}
       \limsup \chi_n \sum_{i=n}^{\infty} \gamma_i \bbE_n\norm{r_{2,i+1}} = 0 \,
     \end{equation*}
     and
     \begin{equation*}
       \limsup \bbE_n G_3(r_{2,n+1}) < +\infty \, , \quad  \lim \bbE_n G_4(r_{2,n+1}) = 0 \, .
     \end{equation*}
         \item\label{hard_coupl_D} $ \limsup \norm{D_n(y_n) - D(y_n)} = 0$.
   \end{enumerate}
 \end{assumption}

 Then, to prove that $\bbP([w_n \rightarrow 0] \cap \Gamma) = 0$ it is sufficient to prove it under the following, more easy to handle, assumption.

 \begin{assumption}\label{hyp:easy_coupl}
  There are six, strictly positive, constants $C_{G_1}, C_{G_2}, C_{G_3}, c_{G_4}, c_{r_1}, c_{r_2}, c_D$, where $c_{\times}$ can be chosen as small as needed, such that almost surely the following holds.
 \begin{enumerate}[i)]
   \item\label{easy_coupl_zermean} For all $n \in \bbN$, $\bbE_n e_{n+1} = 0$ and
   \begin{equation*}
     \sup_{n \in \bbN}\bbE_n G_1(e_{n+1}) \leq C_{G_1} \, , \quad   \inf_{n \in \bbN}\bbE_n G_2(e_{n+1}) \geq C_{G_2}\, .
   \end{equation*}
   \item\label{easy_coupl_r1}
   \begin{equation*}
     \sum_{i=0}^{\infty}\norm{r_{1,i+1}}^2 \leq c_{r_1} \, .
   \end{equation*}
   \item\label{easy_coupl_r2}
   \begin{equation*}
   \sup_{n \in \bbN} \chi_{n}  \sum_{i=n}^{\infty} \gamma_i \bbE_n \norm{r_{2,i+1}} \leq c_{r_2} \,
   \end{equation*}
   and

     \begin{equation*}
       \sup_{n \in \bbN} \bbE_n G_3(r_{2,n+1})\leq C_{G_3} \, , \quad  \sup_{n \in \bbN} \bbE_n G_4(r_{2,n+1})] \leq c_{G_4} \, .
     \end{equation*}
     \item\label{easy_coupl_D} For all $n \in \bbN$, $\norm{D_n(y_n) - D(y_n)} \leq c_D$.
 \end{enumerate}
 \end{assumption}

 To show that proving $\bbP([w_n \rightarrow 0]) = 0$ under Assumption~\ref{hyp:easy_coupl} is sufficient we, for an arbitrary large event $A \in \mcF$, construct a sequence $(\tilde{w}_n)$ that is equal to $(w_n)$ on $\Gamma \cap [w_n \rightarrow 0]$ but satisfies Assumption~\ref{hyp:easy_coupl} almost surely.

 \noindent\textit{\textbf{1)} Equivalence between Assumption~\ref{hyp:hard_coupl}-\eqref{hard_coupl_r1} and Assumption~\ref{hyp:easy_coupl}-\eqref{easy_coupl_r1}.} Notice that
 \begin{equation*}
   \left[\sum_{i=0}^{\infty} \norm{r_{1, i+1}^2} < + \infty\right] \subset \bigcap_{C \in \bbQ} \bigcup_{n_0 \in \bbN}\left[\sum_{i=n_0}^{\infty}\norm{r_{1, i+1}}^2 \leq C\right] \, .
 \end{equation*}
 Therefore, for any $C \in \bbQ$ and $\delta >0$, there is $n_0 \in \bbN$ such that
 \begin{equation*}
   \bbP( \Gamma \cap [w_n \rightarrow 0]) \leq \bbP\left(\Gamma \cap [w_n \rightarrow 0] \cap \left[\sum_{i=n_0}^{\infty}\norm{r_{1, i+1}}^2 \leq C\right]\right) + \delta \, .
 \end{equation*}
 For $n \geq n_0$, denote $(\tilde{r}_{1,n+1})$ the sequence defined as $\tilde{r}_{1,n+1} = r_{1,n+1} \1_{\sum_{i=n_0}^{n+1} \norm{r_{1, i+1}}^2 \leq C}$ and $(\tilde{w}_n)$ the sequence defined as:
 \begin{equation*}
   \tilde{w}_{n+1} =  \tilde{w}_n + \gamma_n D_n (\tilde{w}_n) \tilde{w}_n + \gamma_n \tilde{r}_{1,n+1} + \gamma_n r_{2, n+1} + \gamma_n e_{n+1}\, .
 \end{equation*}
 Shifting the sequences $(\tilde{w}_n), (\gamma_n), (D_n), (\tilde{r}_{n}), (r_n), (e_n)$ by $n_0$, we obtain that the shifted sequences satisfy Assumption~\ref{hyp:hard_coupl}, with Assumption~\ref{hyp:hard_coupl}-\eqref{hard_coupl_r1} replaced by Assumption~\ref{hyp:easy_coupl}-\eqref{easy_coupl_r1}. Furthermore, by construction if we prove that $\bbP([\tilde{w}_n \rightarrow 0]) = 0$, then:
 \begin{equation*}
   \begin{split}
       \bbP(\Gamma \cap [w_n \rightarrow 0 ])&\leq \bbP\left( \Gamma \cap [w_n \rightarrow 0] \cap \left[\sum_{i=n_0}^{\infty}\norm{r_{1, i+1}}^2 \leq C\right]\right) + \delta \\
       &= \bbP\left(\Gamma \cap [\tilde{w}_n \rightarrow 0] \cap \left[\sum_{i=n_0}^{\infty}\norm{r_{1, i+1}}^2 \leq C \right] \right) + \delta\\
       &\leq \bbP([\tilde{w}_n \rightarrow 0]) + \delta  = \delta \, .
   \end{split}
 \end{equation*}
 Since $\delta$ is arbitrary, this will show that $\bbP([w_n \rightarrow 0]) = 0$.

 \noindent\textit{\textbf{2)} Equivalence between Assumption~\ref{hyp:hard_coupl}-\eqref{hard_coupl_r2} and Assumption~\ref{hyp:easy_coupl}-\eqref{easy_coupl_r2}.}
 In a similar manner, for any $\delta, c_{r_2}, c_{G_4} >0$, there is $n_0 \in \bbN$ and $C_{G_3} >0$ such that, up to an event of a probability less than $\delta$, Assumption~\ref{hyp:easy_coupl}-\eqref{easy_coupl_r2} holds for $n \geq n_0$.
 Thus, for $n \geq n_0$, denote $A_n \in \mcF_n$ the event
 \begin{equation*}
   A_n = \left[\sup_{ n_0\leq k \leq n}\chi_k \sum_{i=k}^{\infty} \gamma_i \bbE_k\norm{r_{2,i+1}} \leq c_{r_2}\, , \bbE_n G_3(r_{2,n+1}) \leq C_{G_3}\, , \bbE_n G_4(r_{2,n+1}) \leq c_{G_4}\right] \, .
 \end{equation*}
 Then, define $(\tilde{w}_n)$ with the same recursion than $(w_n)$, but where $r_{2,{n+1}}$ is replaced by $r_{2, n+1} \1_{A_n}$. By construction  Assumption~\ref{hyp:easy_coupl}-\eqref{easy_coupl_r2} is satisfied and proving $\bbP([\tilde{w}_n \rightarrow 0]) = 0$ will imply $\bbP(\Gamma \cap [w_n \rightarrow 0]) \leq \delta$, which, since $\delta$ is arbitrary will show that $\bbP([w_n \rightarrow 0]) = 0$.

 \noindent\textit{\textbf{3)} Equivalence betweeen Assumption~\ref{hyp:hard_coupl}-\eqref{hard_coupl_D} and Assumption~\ref{hyp:easy_coupl}-\eqref{easy_coupl_D}.} As previously, for any $\delta, c_D >0$, there is $n_0 \in \bbN$
 such that for $n \geq n_0$, Assumption \ref{hyp:easy_coupl}-\eqref{easy_coupl_D} is satisfied on $\Gamma \cap [w_n \rightarrow 0]$ up to an event of probability less than $\delta$. Thus, defining, for $n \geq n_0$, $(\tilde{w}_n)$ with the same equation as $(w_n)$, but replacing $D_n(y_n)$ by $D_n(y_n) \1_{\norm{D_n(y_n) - D(y_n)} \leq c_D} + D(y_n) \1_{\norm{D_n(y_n) - D(y_n)}>c_D}$,
  and shifting all of the sequences by $n_0$, we obtain that $(\tilde{w}_n)$ satisfy Assumption~\ref{hyp:easy_coupl}-\eqref{easy_coupl_D} and that
 \begin{equation*}
   \bbP([w_n \rightarrow 0] \cap \Gamma) \leq \bbP([\tilde{w}_n \rightarrow 0]) + \delta \, .
 \end{equation*}

 \noindent\textit{\textbf{4)} Equivalence between Assumption~\ref{hyp:hard_coupl}-\eqref{hard_coupl_zermean} and Assumption~\ref{hyp:easy_coupl}-\eqref{easy_coupl_zermean}.} Here, since we want to preserve the zero-mean assumption on $(e_n)$, the construction is slightly different and goes back to \cite{lai_wei}. Let $(e'_n)$ be a sequence of bounded, zero-mean and i.i.d. random variables defined on an auxiliary probability space $(\Omega', \mcF', \bbP')$. On the probability space $(\Omega \times \Omega', \mcF \otimes \mcF', \bbP \otimes \bbP')$ define a filtration $(\tilde{\mcF}_n)$ as $\tilde{\mcF}_n = \mcF_n \otimes \sigma(\{e'_j : j \leq n \}) $, where $\sigma$ denotes the smallest sigma-algebra generated by a sequence of randoms variables.

 As previously, for any $\delta >0$, there is some $n_0 \in \bbN$ and some constants $C_{G_1}, C_{G_2}$ such that, for $n \geq n_0$, Assumption~\ref{hyp:easy_coupl}-\eqref{easy_coupl_zermean} is satisfied on $\Gamma \cap [w_n \rightarrow 0]$ up to an event with probability less than $\delta$. For $n \geq n_0$, define $A_n \in \mcF_n$ as:
 \begin{equation*}
   A_n = [\bbE_n e_{n+1} = 0, \bbE_n G_1(e_{n+1}) \leq C_{G_1}, \,  \bbE_nG_2(e_{n+1}) \geq C_{G_2}] \, .
 \end{equation*}
 Then, define a sequence of $(\tilde{\mcF}_n)$ adapted random variables $(\tilde{e}_{n})$ as:
 \begin{equation*}
   \forall (a_1, a_2) \in \Omega \times \Omega', \, \quad \tilde{e}_{n}(a_1, a_2) = e_n(a_1)\1_{A_n}(a_1) + e'_n(a_2) \1_{A_n^c}(a_1) \, .
 \end{equation*}
 Let us define $(\tilde{w}_n)$ in the same manner as $(w_n)$ but replacing $(e_n)$ by $(\tilde{e}_n)$ and as previously shift all of the sequences by $n_0$.
 By construction such a sequence satisfies Assumption~\ref{hyp:easy_coupl}-\eqref{easy_coupl_zermean} and as previously:
 \begin{equation*}
   \begin{split}
       \bbP(\Gamma \cap [w_n \rightarrow 0]) &\leq \bbP([\tilde{w}_n \rightarrow 0]) + \delta \, .
   \end{split}
 \end{equation*}
 Thus, proving that $\bbP([\tilde{w}_n \rightarrow 0]) = 0$ will be sufficient to conclude that $\bbP(\Gamma \cap [w_n \rightarrow 0]) = 0$.


\section{Issues with Asymptotic Pseudotrajectories}\label{app:ben}
In the first version of this paper, Theorem~\ref{th:avoid_trap} was stated in the case where $\cM$ satisfied the following, weaker version, of angle condition. 

\textbf{Angle condition (weak version).} For every $\alpha > 0$, there is $\beta >0$ and a neighborhood $U$ of $x^*$ such that for every $x \in U$, 
\begin{equation}\label{eq:ang_weak}
    f(x) - f(P_{\cM} (x)) \geq \alpha \norm{x - P_{\cM}(x)} \implies \scalarp{v}{x -P_{\cM}(x)} \geq \beta \norm{x - P_{\cM}(x)} \, .
\end{equation}

The advantage of this version over the one used in this work is that Equation~\eqref{eq:ang_weak} is verified by a much larger family of functions than weakly convex ones. Typically, the function $(y,z)\mapsto - y^2 - |z|$ satisfies~\eqref{eq:ang_weak} but not the equation of Definition~\ref{def:ver_ang}. 

The main idea behind the initial (incorrect) proof was to establish the existence of $\alpha >0$, such that, on the event $[x_n \rightarrow x_*]$, for $n$ large enough, the iterates must satisfy the left-hand side of~\eqref{eq:ang_weak}, almost surely. The (weak version) of angle condition then implied that they satisfy the right-hand side of~\eqref{eq:ang_weak}, which is exactly what we need for the proof of Theorem~\ref{th:avoid_trap} (more precisely, to establish Lemmas~\ref{lm:En-z}--\ref{lm:zn2_as_bound} and Proposition~\ref{zsympa}).

Unfortunately, the proof of the first claim was using \cite[Theorem 4.1]{ben-hof-sor-05}, which as noted by an anonymous reviewer was incorrect. To avoid future errors of a similar kind we provide in this section a detailed description of our mistake.

Before going further, let us emphasize, that  up to our knowledge, most of the results based on the work of Bena\"im, Hofbauer and Sorin are using \cite[Theorem 4.2]{ben-hof-sor-05}, which, appropriately restated (see Proposition~\ref{ben:true}), remains valid. In particular, we are not aware of any other works, where a mistake is due to an incorrect use of \cite[Theorem 4.1]{ben-hof-sor-05}.

\paragraph{Differential inclusions.} 
In the following, we fix a set-valued map $\sH: \bbR^d \rightrightarrows \bbR^d$ (i.e. $\forall x\in \bbR^d$, $\sH(x)\subset\bbR^d$) that satisfies assumptions of \cite{ben-hof-sor-05}. The relevant example for our work is the case where $\sH = - \partial f$.
We
say that an absolutely continuous curve $\sx:\bbR_{+} \rightrightarrows \bbR^d$ is a solution of the differential inclusion (DI)
\begin{equation}
  \dot{\sx}(t) \in \sH(\sx(t))\, ,
\label{eq:dif_incl}
\end{equation}
with initial condition $x_0\in \bbR^d$, if
$\sx(0) = x_0$ and \eqref{eq:dif_incl} holds for almost every  $t\in\bbR_{+}$. We denote $S_{\sH}(x_0)$ the set of these solutions.

\paragraph{Stochastic approximation of differential inclusions.} Let $(\gamma_n)$ be a sequence of decreasing step-sizes and $(x_n)$ a recursive algorithm satisfying 
\begin{equation}\label{eq:stoch_approx}
  x_{n+1} \in x_n + \gamma_n \sH(x_n) + \gamma_n U_{n+1} \, ,
\end{equation}
where $(U_n)$ is an $\bbR^{d}$-valued sequence of perturbations. One of the purposes of the work of Bena\"im, H\"ofbauer and Sorin (\cite{ben-hof-sor-05}) was to show that under mild conditions on 
$(\gamma_n)$ and $(U_n)$ the iterates produced by Equation~\eqref{eq:stoch_approx} closely follow a solution of the DI~\eqref{eq:dif_incl}.

Indeed, let us define the linearly interpolated process $\sX: \bbR_{+} \rightarrow \bbR^d$ as: 
\begin{equation}\label{eq:lin_int_pro}
  \sX(t) = x_n + \frac{t- \sum_{i=0}^{n} \gamma_i}{\gamma_{n+1}} (x_{n+1} - x_n)\, , \quad \textrm{ if } \sum_{i=0}^n \gamma_i\leq t \leq \sum_{i=0}^{n+1} \gamma_i\, .
\end{equation}
Then \cite[Proof of Theorem~4.2]{ben-hof-sor-05} shows the following result.

\begin{proposition}\label{ben:true}
  Assume the following. \emph{i)} The iterates $(x_n)$ are bounded. \emph{ii)} The step-sizes satisfy $\gamma_n \rightarrow 0$ and $\sum_{i=0}^{\infty} \gamma_i = + \infty$. \emph{iii)} For all $T >0$ it holds that 
  \begin{equation*}
     \sup_{n \leq k \leq \tau(n, T)} \norm{\sum_{i =n}^{k-1} \gamma_i U_{i+1}} \xrightarrow[n \rightarrow \infty]{}0 \, , \quad \textrm{  where $ \tau(n,T) = \inf \left\{ k \geq n : \sum_{i=n}^k \gamma_i  \geq T\right\}$}\, .
  \end{equation*}
Then, the family $(\sX(t+ \cdot))_{t \in \bbR_{+}}$ is relatively compact (in the space of bounded, continuous functions with the norm of uniform convergence on compact sets) and for any $t_n \rightarrow + \infty$ and $\sz : \bbR_{+} \rightarrow \bbR^d$ such that $\sX(t_n + \cdot) \rightarrow \sz$, it holds that $\sz$ is a solution to~\eqref{eq:dif_incl}.
\end{proposition}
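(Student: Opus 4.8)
The plan is to reproduce the classical asymptotic-pseudotrajectory argument, that is, the proof of \cite[Theorem 4.2]{ben-hof-sor-05}. Write $\tau_n=\sum_{i=0}^{n-1}\gamma_i$ for the clock attached to the step sizes, so that $\sX$ is affine on each interval $[\tau_n,\tau_{n+1}]$ and $x_{n+1}=x_n+\gamma_n h_n+\gamma_n U_{n+1}$ for some $h_n\in\sH(x_n)$. Since $(x_n)$ is bounded it stays in a compact set $K$, and since (by the assumptions inherited from \cite{ben-hof-sor-05}) $\sH$ is upper semicontinuous with nonempty compact convex values, there is $M<\infty$ with $\sup_{x\in K}\sup_{v\in\sH(x)}\|v\|\le M$; in particular $\|h_n\|\le M$ for all $n$. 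I would also record the elementary remark that assumption (iii) controls, \emph{a fortiori}, the individual terms $\gamma_i\|U_{i+1}\|$ occurring inside a block of clock-length $\le T$, since each of them is a difference of two consecutive partial sums $\sum_{j=n}^{k-1}\gamma_j U_{j+1}$.

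\emph{Relative compactness.} First I would apply the Arzel\`a--Ascoli theorem on $C([0,T],\bbR^d)$ for each $T\in\bbN$, followed by a diagonal extraction over $T$; note that $\sX$ is defined on all of $\bbR_+$ thanks to $\sum_i\gamma_i=+\infty$. Uniform boundedness of $\{\sX(t+\cdot):t\ge 0\}$ on $[0,T]$ follows from the boundedness of $(x_n)$. For equicontinuity, given $0\le a\le b\le T$ and $t\ge 0$, I would decompose $\sX(t+b)-\sX(t+a)$ into $\sum_{i\in I}\gamma_i h_i$ (where $I$ collects the indices $i$ with $[\tau_i,\tau_{i+1}]\subset[t+a,t+b]$), the corresponding sum $\sum_{i\in I}\gamma_i U_{i+1}$, and two boundary terms coming from the grid intervals straddling $t+a$ and $t+b$. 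The first piece is bounded by $M(b-a)$; by the remark above and by (ii)--(iii), the remaining pieces are bounded, uniformly in $a,b\in[0,T]$, by a quantity $\omega(t,T)$ with $\omega(t,T)\to 0$ as $t\to\infty$. Hence $\|\sX(t+b)-\sX(t+a)\|\le M(b-a)+\omega(t,T)$, which, together with the ordinary uniform continuity of the finitely many members of the family indexed by bounded $t$, gives equicontinuity. Arzel\`a--Ascoli then yields the asserted relative compactness.

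\emph{Identification of the limit.} Let $t_n\to+\infty$ with $\sX(t_n+\cdot)\to\sz$ uniformly on compacts. Passing to the limit in the inequality above shows that $\sz$ is $M$-Lipschitz, hence differentiable at almost every $r\in\bbR_+$; fix such an $r$ and an $h>0$. Using the same decomposition together with (iii), one gets $\sX(t_n+r+h)-\sX(t_n+r)=\sum_{i\in I_n}\gamma_i h_i+o_n(1)$ with $\sum_{i\in I_n}\gamma_i\to h$, so that $\tfrac1h\big(\sz(r+h)-\sz(r)\big)=\lim_n\dfrac{\sum_{i\in I_n}\gamma_i h_i}{\sum_{i\in I_n}\gamma_i}$. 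The quotient on the right is a convex combination of the vectors $h_i\in\sH(x_i)$, $i\in I_n$, and for such $i$ one has $\|x_i-\sz(r)\|\le Mh+\|\sX(t_n+\cdot)-\sz\|_{\infty,[0,T]}+o_n(1)=:\rho_n$, with $\limsup_n\rho_n\le Mh$; hence the quotient lies in $\overline{\conv}\,\sH\big(\overline{B(\sz(r),\rho_n)}\big)$. Letting $n\to\infty$ gives $\tfrac1h\big(\sz(r+h)-\sz(r)\big)\in\overline{\conv}\,\sH\big(\overline{B(\sz(r),2Mh)}\big)$ for all small $h$, and then letting $h\downarrow 0$, together with the standard fact that $\bigcap_{\rho>0}\overline{\conv}\,\sH\big(\overline{B(z,\rho)}\big)=\sH(z)$ for an upper semicontinuous, convex- and compact-valued $\sH$, yields $\dot\sz(r)\in\sH(\sz(r))$. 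Since $r$ was an arbitrary point of differentiability and $\sz(0)=\lim_n\sX(t_n)$, we conclude $\sz\in S_{\sH}(\sz(0))$, which is the assertion.

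\emph{Main obstacle.} The genuinely delicate point is this last limiting argument, where one must juggle three regimes simultaneously --- the discretization error controlled by (iii), the spatial localization of the block $I_n$ around $\sz(r)$ as $n\to\infty$, and finally $h\downarrow 0$ --- and pass from a weighted average of subgradients at nearby points to an actual element of $\sH(\sz(r))$ using the upper semicontinuity and convexity of $\sH$. Everything else is routine bookkeeping with the interpolated clock and the error quantity $\omega(t,T)$.
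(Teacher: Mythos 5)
Your proof is correct and is precisely the argument the paper delegates to the literature: for this proposition the paper gives no proof of its own, stating only that it follows from the proof of \cite[Theorem 4.2]{ben-hof-sor-05}, and your Arzel\`a--Ascoli plus convex-combination/upper-semicontinuity argument is exactly that classical proof. The only cosmetic slip is the phrase ``finitely many members of the family indexed by bounded $t$'' (there are uncountably many, but they are all restrictions of the single uniformly continuous function $\sX$ on a compact interval, so they share a modulus of continuity); otherwise the reconstruction is faithful and complete.
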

\paragraph{Asymptotic pseudotrajectory.} The asymptotic result of the previous paragraph is quite strong and is actually sufficient for most of the results of \cite{ben-hof-sor-05} about accumulation points of iterates given by a stochastic approximation recursion (in particular they are ICT sets). However, the linearly interpolated process~\eqref{eq:lin_int_pro} was claimed to possess a stronger property.

Following \cite{ben-hof-sor-05}, a continuous curve $\sX: \bbR_{+} \rightarrow \bbR^d$ is said to be an asymptotic pseudotrajectory (APT) of~\eqref{eq:dif_incl} if for every $T >0$, it holds that 
\begin{equation}\label{eq:APT}
    \lim_{t \rightarrow + \infty}\sup_{h\in [0,T]} \inf_{ \sx \in S_{\sH}(\sX(t))}\norm{X(t+h) - \sx(h)} = 0 \, .
\end{equation}

The following result was claimed to be true. 
\begin{proposition}\label{prop:ben_false}\cite[Theorem 4.1]{ben-hof-sor-05} 
  Assume that $\sz: \bbR_{+} \rightarrow \bbR^d$ is bounded. Then there is equivalence between. 
  \begin{enumerate}
    \item\label{equiv_b1} $\sz$ is an APT.
    \item\label{equiv_b2} $\sz$ is uniformly continuous and if there is $t_n \rightarrow + \infty$ and $\tilde{\sz} : \bbR_{+} \rightarrow \bbR^d$ such that $\sz(t_n + \cdot) \rightarrow \tilde{\sz}$ (in the sense of uniform convergence on compact intervals), then $\tilde{\sz}$ is a solution to the DI~\eqref{eq:dif_incl}.
  \end{enumerate}
\end{proposition}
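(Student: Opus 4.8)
The plan is to treat the two implications separately. The direction \ref{equiv_b1}$\Rightarrow$\ref{equiv_b2} is routine, and proceeds as follows. Since $\sz$ is bounded, its range lies in a compact set $K$, and because $\sH$ is upper semicontinuous with compact values, $\bigcup_{x\in K}\sH(x)$ is bounded; hence every solution of~\eqref{eq:dif_incl} taking values in $K$ is Lipschitz with a common constant $L$. Fix $T>0$. The APT property says that for $t$ large the curve $\sz$ restricted to $[t,t+T]$ is uniformly within $\varepsilon_t$ of such a solution, with $\varepsilon_t\to 0$; combined with the uniform Lipschitz bound $L$ this gives equicontinuity of $\sz$ on $[T_0,\infty)$ for $T_0$ large, hence (with the obvious uniform continuity on a compact initial segment) uniform continuity of $\sz$ on $\bbR_{+}$. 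Next, suppose $t_n\to+\infty$ and $\sz(t_n+\cdot)\to\tilde\sz$ uniformly on $[0,T]$. By the APT property pick $\sx_n\in S_{\sH}(\sz(t_n))$ with $\sup_{h\in[0,T]}\|\sz(t_n+h)-\sx_n(h)\|\to 0$; then $\sx_n\to\tilde\sz$ uniformly on $[0,T]$, and since the solution set of~\eqref{eq:dif_incl} is closed under uniform-on-compacts limits --- a standard consequence of the upper semicontinuity and convex-valuedness of $\sH$ (weak-$\ast$ compactness of the derivatives plus Mazur's lemma) --- the limit $\tilde\sz$ solves~\eqref{eq:dif_incl} on $[0,T]$. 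Since $T$ is arbitrary, \ref{equiv_b2} follows.

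For \ref{equiv_b2}$\Rightarrow$\ref{equiv_b1} I would argue by contradiction. Assume $\sz$ is uniformly continuous, every shift-limit of $\sz$ solves~\eqref{eq:dif_incl}, yet $\sz$ is not an APT: there are $\varepsilon>0$, $T>0$ and $t_n\to+\infty$ with
\[
\sup_{h\in[0,T]}\ \inf_{\sx\in S_{\sH}(\sz(t_n))}\ \|\sz(t_n+h)-\sx(h)\|\ \ge\ \varepsilon
\qquad\text{for all }n.
\]
By uniform continuity and boundedness the family $(\sz(t_n+\cdot))$ is equicontinuous and uniformly bounded on $[0,T]$, so by Arzel\`a--Ascoli some subsequence converges uniformly on $[0,T]$ to a curve $\tilde\sz$, which by hypothesis solves~\eqref{eq:dif_incl}; in particular $\sz(t_n)\to\tilde\sz(0)=:x_\infty$. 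The natural way to reach a contradiction is to transport $\tilde\sz$ back to solutions issued from the points $\sz(t_n)$: one would want solutions $\sx_n\in S_{\sH}(\sz(t_n))$ with $\sup_{h\in[0,T]}\|\sx_n(h)-\tilde\sz(h)\|\to 0$, since then $\sup_{h\in[0,T]}\|\sz(t_n+h)-\sx_n(h)\|\to 0$, contradicting the inequality above.

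The main obstacle is precisely this transport step, and it is fatal. It requires a \emph{lower} semicontinuity of the solution map $x\mapsto S_{\sH}(x)$ at $x_\infty$, namely that every solution issued from $x_\infty$ is a uniform-on-$[0,T]$ limit of solutions issued from nearby points $\sz(t_n)\to x_\infty$. For a general upper semicontinuous, convex- and compact-valued $\sH$ this is \emph{false}: solution sets of differential inclusions depend only upper semicontinuously on the initial condition, not lower semicontinuously. This is exactly the gap in the proof of \cite[Theorem~4.1]{ben-hof-sor-05}; as a consequence the implication \ref{equiv_b2}$\Rightarrow$\ref{equiv_b1}, and hence Proposition~\ref{prop:ben_false} as stated, cannot be established --- and is in fact incorrect --- the remainder of this appendix (and Remark~\ref{rmk:false_first}) spells out a counterexample and the consequences of this failure.
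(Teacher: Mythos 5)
You have correctly recognized that this ``proposition'' is precisely the statement the paper is at pains to refute: the appendix presents it only to explain why the implication \ref{equiv_b2}~$\Rightarrow$~\ref{equiv_b1} is false, so no proof of the full equivalence exists or is given. Your treatment matches the paper's position but arrives at it by a complementary route. For \ref{equiv_b1}~$\Rightarrow$~\ref{equiv_b2} the paper simply asserts the implication, while you sketch a proof (uniform Lipschitz bound on solutions over the compact range, closure of $S_{\sH}$ under locally uniform limits via weak-$\ast$ compactness and Mazur's lemma); this is fine and standard. For the converse, you give an abstract diagnosis --- the transport step needs lower semicontinuity of $x\mapsto S_{\sH}(x)$, which an upper semicontinuous, convex- and compact-valued $\sH$ does not provide --- whereas the paper exhibits the concrete counterexample $\sz(t)=1/(t+1)$, $\sH=\partial|\cdot|$. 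The two fit together exactly: at $x=0$ the constant solution lies in $S_{\sH}(0)$, yet every solution issued from the nearby points $\sz(t_n)>0$ escapes at unit speed, so the solution map is not lower semicontinuous there and the APT property fails with defect at least $T$. The only caveat is that identifying where a proof strategy breaks does not by itself establish falsity of the statement; since you defer to the counterexample spelled out in the appendix for that final step, your account is complete and consistent with the paper's.
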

In the first version of this work, the implication \ref{equiv_b2} $\implies$ \ref{equiv_b1} was used to obtain the fact that asymptotically the iterates of SGD satisfy the left-hand side of~\eqref{eq:ang_weak} (and thus, by the weak angle condition, the right-hand side). Following that, the proof was identical to the one of Theorem~\ref{th:avoid_trap}. Unfortunately, while \ref{equiv_b1} $\implies$ \ref{equiv_b2}, we actually do not have the reverse implication. Indeed, the following example was provided by an anonymous reviewer. 

Consider $\sz(t) = \frac{1}{t+1}$ and $\sH  = \partial | \cdot |$. Thus, $\sH(x) = \sign(x)$ if $x \neq 0$ and $\sH(0) = [-1, 1]$. The curve $\sz$ is uniformly continuous, and for any $T>0$ and $t_n \rightarrow + \infty$, 
\begin{equation*}
  \sup_{h \in [0,T]}\norm{\sz(t_n + h)} \rightarrow 0 \, .
\end{equation*}
Since $\tilde{\sz}\equiv 0$ is a solution to the DI associated to $\sH$ we indeed have \ref{equiv_b2}. Nevertheless, for any $n \in \bbN$ and $\sx$ a solution to the DI~\eqref{eq:dif_incl} starting at $\sz(t_n)$, we have $\sx(T) = \sz(t_n) +T$. Thus, 
\begin{equation}
  \lim_{t_n \rightarrow + \infty}\sup_{h\in [0,T]} \inf_{ \sx \in S_{\sH}(\sz(t_n))}\norm{\sz(t_n+h) - \sx(h)} \geq T\, ,
\end{equation}
and the first point of Proposition~\ref{prop:ben_false} is not satisfied.

\end{appendices}
\end{document}